\newcommand{\incl}[1][r]{\ar@<-0.2pc>@{^(-}[#1] \ar@<+0.2pc>@{-}[#1]}
\newcommand{\rk}{\operatorname{rk}}
\newcommand{\ad}{\operatorname{ad}}
\newcommand{\codim}{\operatorname{codim}}
\renewcommand{\mod}{{\operatorname{mod}}}
\renewcommand{\Im}{\operatorname{Im}}
\newcommand{\GL}{\rm GL}
\newcommand{\muz}{\mu^{-1}(0)}
\newcommand{\muzx}[1]{(\mu^{-1}(0))_{#1}}
\newcommand{\SPR}{/\!\!/\!\!/}
\newcommand{\GIT}{/\!\!/}
\renewcommand{\ll}{\mathfrak{l}} 
\newcommand{\lf}{\mathfrak{l}} 
\renewcommand{\gg}{\mathfrak{g}}
\newcommand{\af}{\mathfrak{a}}
\newcommand{\kk}{\mathfrak{k}}
\newcommand{\qq}{\mathfrak{q}}
\newcommand{\tf}{\mathfrak{t}}
\newcommand{\zz}{\mathfrak{z}}
\newcommand{\wfr}{\mathfrak{w}}
\newcommand{\hh}{\mathfrak{h}}
\newcommand{\lieh}{\mathfrak{h}}
\newcommand{\ggl}{\mathfrak{gl}}
\newcommand{\liesl}{\mathfrak{sl}}
\newcommand{\cc}{{\mathfrak{c}}}
\newcommand{\CHc}{{\overset{\circ}{CH}}}
\newcommand{\OO}{\mathcal{O}}
\newcommand{\NN}{\mathbb{N}}
\newcommand{\QQ}{\mathbb{Q}}
\newcommand{\ZZ}{\mathbb{Z}}
\newcommand{\C}{\mathbb{C}}
\newcommand{\Cs}{\mathbb{C}^{\times}}
\newcommand{\K}{{\Bbbk}}
\newcommand{\Ks}{\K^{\times}}
\newcommand{\Ker}{\operatorname{Ker}}
\newcommand{\Zm}{\ZZ_m}
\newcommand{\Zmo}{\ZZ_{m_0}}
\newcommand{\alter}[1]{\left\{\begin{array}{l l}#1\end{array}\right.}
\newcommand{\cv}{\cc^{\vee}}
\newcommand{\ubar}[1]{\underaccent{\bar}{#1}}
\newcommand{\Xnko}{X_n^{(\ell)}\left(\ubar{1}\right)}
\theoremstyle{plain}
\newtheorem{theorem}{Theorem}[section]
\newtheorem{lemma}[theorem]{Lemma}
\newtheorem{proposition}[theorem]{Proposition}
\newtheorem{corollary}[theorem]{Corollary}
\newtheorem{conj}[theorem]{Conjecture}
\theoremstyle{definition}
\newtheorem{definition}[theorem]{Definition}
\newtheorem*{Ack}{Acknowledgements}
\theoremstyle{remark}
\newtheorem{remark}[theorem]{Remark}
\begin{document}

\title[Moment maps for tori and $\theta$]{On the normality of the null-fiber of the moment map for $\theta$- and tori representations}

\author{Michael BULOIS}

\maketitle
\thispagestyle{empty}


\setcounter{tocdepth}{1}

\begin{abstract}
Let $(G,V)$ be a representation with either $G$ a torus or $(G,V)$ a locally free stable $\theta$-representation. We study the fiber at $0$ of the associated moment map, which is a commuting variety in the latter case. We characterize the cases where this fiber is normal. The quotient (\emph{i.e.} the symplectic reduction) turns out to be a specific orbifold when the representation is polar. In the torus case, this confirms a conjecture stated by C. Lehn, M. Lehn, R. Terpereau and the author in a former article. In the $\theta$-case, the conjecture was already known but this approach yield another proof.\end{abstract}

\tableofcontents

\vspace{-6mm}

\section*{Introduction}
Let $\K$ be an algebraically closed field of characteristic zero. Let $(G,V)$ be a representation of a connected reductive algebraic group $G$ on a finite dimensional $\K$-vector space $V$. Let $\gg$ be the Lie algebra of $G$. We define the moment map \[\mu: \left\{\begin{array}{ c c c}V\oplus V^*&\rightarrow& \gg^*\\ (x,\varphi)&\mapsto &(g\mapsto \varphi(g\cdot x))\end{array}\right.\]

Among the fibers of the moment map, the most special one is $\muz$. 
When the representation $(G,V)$ is the adjoint action of $G$ on $\gg$, $\muz$ can be identified wih the commuting scheme of $\gg$. According to a long-standing conjecture, this scheme should be normal. One of the aim of this work is to study the normality of the scheme $\muz$ in some nice cases. For instance, if $(G,V)$ is visible and locally free, we already know that $\muz$ is a complete intersection and the normality question reduces to singular locus questions thanks to Serre's criterion (see \emph{e.g.} Corollary~\ref{cor_locfree} and Remark~\ref{rk_F0F1}). Using this, Panyushev \cite{Pan94} has shown that $\muz$ is reduced and normal in the case of the representation associated to a symmetric Lie algebra of maximal rank. 

There are some natural generalizations of adjoint representations and symmetric Lie algebras: the so-called $\theta$-representations ( or $\theta$-groups) of Vinberg \cite{Vin76} and, more generally, the polar representations of Dadok and Kac \cite{DK85}. Recall that a $\theta$-representation is a representation $(G,V)$ isomorphic to some $(H_0, \lieh_1)$ where $H$ is a connected reductive group acting on its Lie algebra $\lieh$, with $\lieh$  equipped with a $\Zm$-grading $\bigoplus_{i\in \Zm} \lieh_i$ and where $H_0$ is the connected subgroup of $H$ whose Lie algebra is $\lieh_0$.
It turns out that the geometry of $\muz$ can be a bit more involved in this setting of $\theta$-representations than for the classical adjoint representation. For instance, it is possible to find examples of such representations where $\muz$ is non-irreducible \cite{Pan94, Bu11}, non-reduced or irreducible non-normal \cite{BLLT}. In this paper, we classify the locally free stable $\theta$-representations for which $\muz$ is normal. It turns out (Theorem~\ref{main_thm_theta}) that this normality property holds in all the classical cases and we find only $5$ non-normal examples of exceptional cases, each of rank $1$. We take the opportunity to generalize some properties of decompositions classes to the  $\theta$-cases in Section \ref{thetaGroups}.
 The $\theta$-case is partly based on the study of some specific tori representations. This is one motivation for studying $\muz$ for a general representation of a torus in Section~\ref{norm_muz_tori}. The fiber turns out to be normal as soon as it is irreducible and we give a combinatorial criterion for this in terms of the weights of the representation (Theorem~\ref{irrcomp}).

The variety $\muz$ is also involved in the symplectic reduction $(V\oplus V^*)\SPR G:=\muz\GIT G$ where $X \GIT G$ denotes the categorical quotient of an affine $G$-variety $X$. If $\muz$ is normal, then the symplectic reduction is also normal. 
Following a theorem of Joseph \cite{Jos97} in the Lie algebra case, and a conjecture stated in \cite{BLLT}  in the complex case (see conjecture \ref{conjA} here), the variety $(V\oplus V^*)\SPR G$ should be isomorphic to a specific orbifold whenever $(G,V)$ is visible and polar. The results of the present paper yield a proof of this conjecture when $G$ is a torus (Corollary \ref{conjA_tori}), and allows to recover it when $(G,V)$ is a stable locally free $\theta$-representation (Remark \ref{norm_quot}). In the torus case, we also get that that the visibility assumption is necessary (Proposition~\ref{bijection}) but that the scheme $(V\oplus V^*)\SPR G$ is nevertheless reduced (Theorem~\ref{irrcomp}) and normal (Proposition \ref{normal_quot}) in general.

One should note that, as the torus case suggests, we put light on representations which are far from being irreducible. 
Part of the results concerning tori (\emph{e.g.}, the normality of the symplectic reduction) were written independantly in the recent preprint \cite{HSS}. The general philosophy of \cite{HSS} is to focus on ``large'' representations (ex: $2$-large) whereas the polar representations are ``small''.
\begin{Ack}
Section \ref{thetaGroups} of this paper was essentially included in early versions of \cite{BLLT}. I want to thank my co-authors to allow me to reuse this material here. 
\end{Ack}

\section{Generalities}\label{gener}
In this section, we introduce some classical material to study null fibers of moment maps.  Even if some results are stated in a greater generalities, most of this section is heavily inspired by \cite{Pan94}. Popov's article \cite{Po08} is also a source of inspiration.

We work over an algebraically closed field $\K$ of characteristic zero. We use the language of scheme even if these schemes will always be separated of finite type over $\K$ and will often be reduced, i.e. varieties. We do not require that the varieties are irreducible. When speaking about the \textit{irreducible components} of a scheme, we always mean the irreducible components of the corresponding reduced variety and we do not consider embedded components.

If $A$ is a subset of a vector space, $\langle A\rangle$ stands for the subspace generated by $A$.

\begin{definition}
An element $x\in V$ is said to be 
\begin{itemize}
\item semisimple if $G.x$ is closed,
\item nilpotent if $0\in \overline{G.x}.$
\end{itemize}
\end{definition}
Equivalently, $x$ is nilpotent if and only if $\pi(x)=\pi(0)$ where $\pi:V\rightarrow V\GIT G$ is the categorical quotient map.

\begin{definition}
The representation $(G,V)$, is said to be
\begin{itemize}
\item \emph{locally free}, if $\dim G.x=\dim G$ for some $x\in V$. 
\item \emph{visible}, if there are finitely many nilpotent $G$-orbits.
\item \emph{stable}, if there is an open subset of $V$, consisting of semisimple elements
\item \emph{polar}, if there exists $v\in V$ such that $\dim \cc_v=\dim V//G$ where $\cc_v:=\{x\in ~V~|\, \gg\cdot x\subset \gg\cdot v\}$. We then say that $\cc_v$ is a \emph{Cartan subspace} of $V$.
\end{itemize}
\end{definition}

Examples of visible and stable representations are given by the adjoint action of $G$ on $\gg$, or a $\theta$-representation arising from a $\ZZ_2$-grading (so-called \emph{symmetric Lie algebras}). In general, $\theta$-representations are always visible but not necessarily stable \cite{Vin76}. Polar representations, as introduced in \cite{DK85}, can be neither stable nor visible. Examples of locally free representations are given by symmetric Lie algebras \emph{of maximal rank}. 
It is worth noting that no adjoint representation $(G,\gg)$ is locally free. 

When $k=\C$ and $(G,V)$ is polar, we write $\cc$ and $\cv$ for dual Cartan subspaces of $V$ and $V^*$, following \cite[(3.4), (3.6)]{BLLT}, together with dual decompositions $V=\mathfrak c\oplus (\gg\cdot \cc)\oplus U$ and $V^*=\cv\oplus (\gg\cdot \cv)\oplus U^{\vee}$. Write $W$ for the Weyl group $N_{G}(\cc)/C_G(\cc)$. The following was Conjecture $A$ in \cite{BLLT}.
\begin{conj}\label{conjA}
Assume that $(G,V)$ is visible and polar, then we have an isomorphism of Poisson varieties $(V\oplus V^*)\SPR G\cong (\cc\oplus \cv)/W$.\end{conj}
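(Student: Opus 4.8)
The plan is to build out of the given dual decompositions $V=\cc\oplus(\gg\cdot\cc)\oplus U$ and $V^*=\cv\oplus(\gg\cdot\cv)\oplus U^\vee$ a natural morphism $(\cc\oplus\cv)/W\to(V\oplus V^*)\SPR G$, to show it is bijective, and then to upgrade it to an isomorphism using the normality of $\muz$ proved in the body of the paper; the Poisson statement will fall out of the construction. First I would check that $\cc\oplus\cv\subseteq\muz$. Since $\cv$ annihilates $\gg\cdot\cc\oplus U$, for $x\in\cc$, $\varphi\in\cv$ and $g\in\gg$ one has $\varphi(g\cdot x)\in\varphi(\gg\cdot\cc)=0$, so $\mu(x,\varphi)=0$. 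The subspace $\cc\oplus\cv$ is stable under $N_G(\cc)$ with $C_G(\cc)$ acting trivially, hence it carries an action of the (finite, by polarity) group $W=N_G(\cc)/C_G(\cc)$, and the inclusion $\cc\oplus\cv\into\muz$ descends to a morphism
\[
\Phi\colon\ (\cc\oplus\cv)/W\ \longrightarrow\ \muz\GIT G\ =\ (V\oplus V^*)\SPR G .
\]
The canonical symplectic form $\omega\big((x,\varphi),(y,\psi)\big)=\psi(x)-\varphi(y)$ on $V\oplus V^*$ restricts on $\cc\oplus\cv$ to the symplectic form attached to the (perfect, by construction) pairing between $\cc$ and $\cv$; this form is $W$-invariant and descends to the Poisson structure of $(\cc\oplus\cv)/W$, and since $\cc\oplus\cv$ is a symplectic slice inside the coisotropic $\muz$, a direct computation (as in \cite{BLLT}) shows $\Phi$ is Poisson. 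So it suffices to prove $\Phi$ is an isomorphism of varieties.

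\textbf{Bijectivity.} Next I would show $\Phi$ is bijective on closed points. For surjectivity: each fiber of $\muz\to\muz\GIT G$ contains a unique closed $G$-orbit, and one shows such an orbit meets $\cc\oplus\cv$ -- this is the ``doubled'' analogue of the Dadok--Kac property that closed orbits of a polar representation meet a Cartan subspace, and it is exactly here that visibility enters (and, by Proposition~\ref{bijection} in the torus case, cannot be dropped); concretely this is read off the combinatorial description behind Theorem~\ref{irrcomp} for tori, and from Vinberg's structure theory together with the decomposition classes of Section~\ref{thetaGroups} in the $\theta$-case. For injectivity: two points of $\cc\oplus\cv$ lying in one $G$-orbit are $W$-conjugate, again the doubled analogue of the corresponding statement for Cartan subspaces. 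In the settled cases $\muz$ is irreducible, so $(\cc\oplus\cv)/W$ and $\muz\GIT G$ are irreducible, and since we work over $\C$ a bijective dominant morphism of irreducible varieties has degree $1$; thus $\Phi$ is birational, and it is quasi-finite.

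\textbf{Conclusion.} In the cases at hand $\muz$ is normal (Theorem~\ref{irrcomp} for tori, Theorem~\ref{main_thm_theta} for $\theta$-representations), hence so is the affine quotient $(V\oplus V^*)\SPR G=\muz\GIT G$, being the categorical quotient of a normal affine variety by a reductive group. A quasi-finite birational morphism onto a normal variety is, by Zariski's main theorem, an open immersion; being also surjective, $\Phi$ is an isomorphism, and by the first step an isomorphism of Poisson varieties. This proves Conjecture~\ref{conjA} when $G$ is a torus (Corollary~\ref{conjA_tori}) and re-proves it for stable locally free $\theta$-representations (Remark~\ref{norm_quot}).

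\textbf{Main obstacle.} The genuine difficulty is twofold. The conjecture-specific part is the bijectivity of $\Phi$ -- that, under visibility, every closed $G$-orbit of $\muz$ meets the linear slice $\cc\oplus\cv$ in a single $W$-orbit -- which for an arbitrary polar representation is essentially the open part of the conjecture and is verified here only after the explicit analysis of tori (weights) and of $\theta$-groups. The deeper obstacle, which turns ``finite birational'' into ``isomorphism'', is the normality of $\muz$ itself: this is the subject of the paper's main theorems, and it is where Serre's criterion, the complete-intersection property in the locally free case, and the singular-locus computations do the real work.
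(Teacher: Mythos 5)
Your overall architecture is the paper's own (Remark~\ref{rk_conjA}): exhibit the natural Poisson morphism $(\cc\oplus\cv)/W\to\muz\GIT G$, prove it is bijective by showing every closed orbit of $\muz$ meets $\cc\oplus\cv$ in a single $W$-orbit, and then upgrade to an isomorphism using normality of the target. But your concluding step contains a genuine gap: you derive normality of $(V\oplus V^*)\SPR G$ from normality of $\muz$, and $\muz$ is \emph{not} normal in precisely the cases where the conjecture still has to be proved. For tori, Theorem~\ref{irrcomp}(iv) says $\muz$ is normal if and only if it is irreducible, and by Corollary~\ref{st_irr} a visible torus representation has irreducible $\muz$ only when it is stable (equivalently $I_f=\emptyset$); the conjecture, however, is claimed for \emph{all} visible torus representations. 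The paper gets around this with Proposition~\ref{normal_quot}, which proves normality of the quotient directly by showing $(V\oplus V^*)\SPR T\cong(V_{I_d}\oplus V_{I_d}^*)\SPR T$ and that the latter comes from an irreducible, hence normal, null-fiber --- the quotient is normal even when $\muz$ is not. Similarly, in the $\theta$-case Theorem~\ref{main_thm_theta} asserts that $\muz$ is non-normal exactly for the five representations of Table~\ref{ex_bad}, so your appeal to that theorem proves nothing there; the paper handles these five cases separately (Remark~\ref{norm_quot}), using that they are of rank one so that the natural map is a dominant \emph{closed immersion} by \cite{BLLT}, bypassing normality of the target altogether.

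A secondary, smaller point: your bijectivity step is asserted as a ``doubled Dadok--Kac property'' without proof, whereas this is where visibility actually does work. In the torus case the paper's Proposition~\ref{bijection} shows that closed orbits of $\muz$ are exactly the $T.(x,y)$ with both $T.x$ and $T.y$ closed (and that this fails without visibility), and Corollary~\ref{conjA_tori} then places a generic such $x$ in $\cc$ and deduces $y\in\cv$ from $y\in(\tf\cdot x)^{\perp}$ together with closedness of $T.y$ and \cite[Corollary~2.5]{DK85}. Injectivity (two points of $\cc\oplus\cv$ in one $G$-orbit are $W$-conjugate) is not reproved but imported from \cite[Propositions~3.3~and~3.5]{BLLT}. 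Your sketch is compatible with this, but as written it assumes rather than establishes the key input.
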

\begin{remark}\label{rk_conjA}
Assume $\K=\C$. From \cite[Proposition~3.3\&3.5]{BLLT}, we have a injective natural morphism $\cc\times\cv/W\rightarrow \muz\GIT G$ of Poisson varieties. Since $\cc\oplus\cv/W$ is normal, Conjecture \ref{conjA} is equivalent to:
\begin{enumerate}
\item The morphism $\cc\times\cv/W\rightarrow \muz\GIT G$ is dominant (i.e. it is bijective.) 
\item $\muz\GIT G$ is normal
\end{enumerate}
\end{remark}

When studying the null fiber of the moment map, it is enlightening to study the modality of $(G,V)$. The modality of a $G$-subvariety $X\subseteq V$ is defined as
\[\mod(G,X):=\max_{Y\subseteq X} \left(\min_{x\in Y} \left(\codim_Y G.x\right)\right)\]
where the maximum is taken over all irreducible subvarieties $Y\subseteq X$. 

In the remaining of the section we will consider a finite cover of $V$, $V=\bigcup_i J_i$ such that
\begin{itemize}
\item  Each $J_i$ is $G$-stable, irreducible and locally closed in $V$ of constant orbit dimension (that is $\exists m_i\in \NN$, $\forall x\in J_i$, $\dim G\cdot x=m_i$),
\item $\overline{J_i}=\overline{J_j}\Rightarrow i=j$. 
\end{itemize}
%
Exemples of such cover includes sheets of $(G,V)$ or decomposition classes in the case of the adjoint action. Note that if the cover is disjoint $V=\bigsqcup_i J_i$, then the second condition follows from the first one.
Since the $J_i$ are of constant orbit dimension, we have $\mod(G,J_i)=\dim J_i-m_i$. Moreover, from finiteness and locally closeness, each irreducible subvariety $Y\subset V$ intersects at least one $J_i$ as a dense open subset of $Y$  and we have $\min_{x\in Y} \codim_Y G.x=\mod(G,J_i)$. Hence \begin{equation}\label{modTVJ}\mod(G,V)=\max_{i} \,\mod(G,J_i).\end{equation} Given a subvariety $X\subseteq V$, we write \begin{equation}\muzx{X}:=\{(x,\varphi)\in \mu^{-1}(0)| x\in X\}=\{(x,\varphi)\in X\times V^*\, |\, \varphi\in (\gg.x)^{\perp}\}\label{def_muzx}\end{equation}
seen as a subvariety of $\muz$.
This allows for instance to decompose $\muz$, as a finite union: \begin{equation}\label{decmuzmuzS}\muz=\bigcup_{i}\muzx{J_i}.\end{equation}

\begin{lemma}\label{irr_comp_muz}
\begin{enumerate}[label=(\roman*)]
\item 
Each $\muzx{J_i}$ is an irreducible and locally closed subvariety of $\muz$ of dimension $\dim V+\mod(G,J_i)$.
\item Each irreducible component of $\muz$ is of the form $\overline{\muzx{J_i}}$ for some $i$. 
\item There is a bijection between the set of irreducible components of $\muz$ of maximal dimension $\dim V+\mod(G,V)$ and the set of $J_i$ with maximal modality $\mod(G,J_i)=\mod(G,V)$.
\end{enumerate}
\end{lemma}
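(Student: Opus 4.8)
The plan is to analyze the fibers of the second projection $\mathrm{pr}_1:\muzx{J_i}\to J_i$, $(x,\varphi)\mapsto x$, whose fiber over $x$ is the linear space $(\gg\cdot x)^\perp\subseteq V^*$ of dimension $\dim V-\dim G\cdot x=\dim V-m_i$. First I would establish part (i): since $J_i$ is irreducible and the fibers of $\mathrm{pr}_1$ are irreducible of constant dimension $\dim V-m_i$, the total space $\muzx{J_i}$ is irreducible of dimension $\dim J_i+(\dim V-m_i)=\dim V+(\dim J_i-m_i)=\dim V+\mod(G,J_i)$, the last equality being the constant-orbit-dimension identity $\mod(G,J_i)=\dim J_i-m_i$ recalled in the text. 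Local closedness follows from writing $\muzx{J_i}=(J_i\times V^*)\cap \muz$ inside $\muz$, with $J_i$ locally closed in $V$ (and $\muz$ is closed in $V\oplus V^*$, so this is locally closed in $V\oplus V^*$, hence in $\muz$).

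For part (ii), I would start from the finite decomposition $\muz=\bigcup_i \overline{\muzx{J_i}}$ obtained by taking closures in \eqref{decmuzmuzS}. Each $\overline{\muzx{J_i}}$ is irreducible by (i), so every irreducible component of $\muz$ is contained in one of them; conversely, since the union is finite, standard arguments (an irreducible closed set contained in a finite union of irreducible closed sets is contained in one of them) show each $\overline{\muzx{J_i}}$ is contained in some irreducible component, and combining these two inclusions forces every irreducible component to equal some $\overline{\muzx{J_i}}$. The only subtlety here is that distinct indices $i$ may give the same component (redundancy is allowed), so one gets a surjection from $\{J_i\}$ onto the set of components, not yet a bijection.

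For part (iii), restrict attention to the indices with maximal modality. By \eqref{modTVJ}, $\mod(G,V)=\max_i \mod(G,J_i)$, so by (i) these are exactly the $\overline{\muzx{J_i}}$ of maximal dimension $\dim V+\mod(G,V)$, and each such is automatically an irreducible component (a maximal-dimensional irreducible closed subset cannot be properly contained in another irreducible subset of $\muz$, since all of $\muz$ has dimension at most $\dim V+\mod(G,V)$ by (i) and (ii)). It remains to show the correspondence $J_i\mapsto\overline{\muzx{J_i}}$ is injective on this set of maximal indices: if $\overline{\muzx{J_i}}=\overline{\muzx{J_j}}$, then projecting to $V$ via the closed map argument (the image of $\overline{\muzx{J_i}}$ under $\mathrm{pr}_1$ has closure $\overline{J_i}$, since $\muzx{J_i}\to J_i$ is surjective and dominant onto a dense subset), we get $\overline{J_i}=\overline{J_j}$, whence $i=j$ by the second defining property of the cover.

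The main obstacle I expect is the bookkeeping in part (iii): one must be careful that the non-maximal $\overline{\muzx{J_i}}$ are genuinely irrelevant — they could a priori coincide with a maximal component or be embedded in one — and that the projection-closure argument $\overline{\mathrm{pr}_1(\overline{\muzx{J_i}})}=\overline{J_i}$ is applied correctly (it uses that $\mathrm{pr}_1(\muzx{J_i})=J_i$ together with continuity of $\mathrm{pr}_1$). Everything else is a routine application of the fibration-dimension formula and elementary irreducibility facts.
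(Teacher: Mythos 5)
Your proposal is correct and follows essentially the same route as the paper: the paper also views $\muzx{J_i}$ as a corank-$m_i$ subbundle of $J_i\times V^*$ over the irreducible base $J_i$ to get (i), deduces (ii) from the finite decomposition \eqref{decmuzmuzS}, and proves injectivity in (iii) via the first projection together with the hypothesis $\overline{J_i}=\overline{J_j}\Rightarrow i=j$. (Only a cosmetic slip: you call $\mathrm{pr}_1:(x,\varphi)\mapsto x$ the ``second'' projection.)
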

\begin{proof}
(i) From \eqref{def_muzx}, $\muzx{J_i}$ is a subbundle in $J_i\times V^*$ of codimension $m_i$. In particular, it is an irreducible subvariety of $\muz$ of dimension \begin{equation}\dim V+\dim J_i-m_i=\dim V+\mod(G,J_i).\label{dim_muzS}\end{equation}
(ii) is clear from (i) and \eqref{decmuzmuzS}.\\ 
(iii) By considering projection on the first variable, $\overline{\muzx{J_{i}}}=\overline{\muzx{J_{j}}}$ can happen only if $\overline{J_{i}}=\overline{J_{j}}$, that is only if $i=j$. The bijection is then clear from (i) and (ii).
\end{proof}

Another classical result is the following characterization of the smooth locus of $\muz$. If $y$ belongs to a $G$-module $Y$ (e.g. $Y=V$ or $Y=V\oplus V^*$), we let $\gg^y:=\{g\in \gg|\, g.y=0\}$ be the centralizer of $y$ in $\gg$.
\begin{proposition}\label{smooth_muz}
The following assertions are equivalent for an element $(x,\varphi)\in\muz\subset V\oplus V^*$
\begin{enumerate}[label=(\roman*)]
\item  $(x,\varphi)$ is a smooth point of the scheme $\muz$
\item $(x,\varphi)$ belongs to a unique irreducible component $\overline{\muzx{J_i}}$ for some $J_i$, and 
$\dim \gg^{x,\varphi}=\mod(G,J_i)+\dim \gg-\dim V$
\end{enumerate}
\end{proposition}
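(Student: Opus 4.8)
The plan is to identify the Zariski tangent space of the scheme $\muz$ at a point $p=(x,\varphi)$ and to compare its dimension with the local dimension of $\muz$ at $p$ furnished by Lemma~\ref{irr_comp_muz}.

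Since $\muz$ is by definition the scheme-theoretic fiber $\mu^{-1}(0)$, cut out in $V\oplus V^*$ by the coordinates of $\mu$, its Zariski tangent space at $p$ is $T_p\muz=\Ker(d_p\mu)$, where $d_p\mu\colon V\oplus V^*\to\gg^*$ is the differential (identifying the tangent space of a vector space with the space itself). The key computation is that of the transpose $(d_p\mu)^{\mathsf T}\colon\gg\to V^*\oplus V$: from $d_p\mu(\dot x,\dot\varphi)(g)=\dot\varphi(g\cdot x)+\varphi(g\cdot\dot x)$ together with the defining formula $(g\cdot\varphi)(v)=-\varphi(g\cdot v)$ of the action on $V^*$, one gets $(d_p\mu)^{\mathsf T}(g)=(-g\cdot\varphi,\,g\cdot x)$. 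Hence $(d_p\mu)^{\mathsf T}(g)=0$ if and only if $g\in\gg^{x}\cap\gg^{\varphi}=\gg^{x,\varphi}$, so that $\rk d_p\mu=\dim\gg-\dim\gg^{x,\varphi}$ and
\[
\dim T_p\muz=2\dim V-\dim\gg+\dim\gg^{x,\varphi}.
\]
(Equivalently, $\Im d_p\mu=(\gg^{x,\varphi})^{\perp}$, the standard image property of a moment map.)

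Next I would invoke the elementary fact that a scheme of finite type over $\K$ is smooth at $p$ if and only if its local ring $\OO_{\muz,p}$ is regular, equivalently $\dim T_p\muz=\dim_p\muz$ together with $p$ lying on a single irreducible component. By Lemma~\ref{irr_comp_muz}(ii) every irreducible component of $\muz$ is of the form $\overline{\muzx{J_i}}$, and if $p$ lies on the unique such component $\overline{\muzx{J_i}}$ then $\dim_p\muz=\dim\overline{\muzx{J_i}}=\dim V+\mod(G,J_i)$ by Lemma~\ref{irr_comp_muz}(i). Substituting the tangent space formula, $p$ is a smooth point of $\muz$ if and only if it lies on a unique component $\overline{\muzx{J_i}}$ and
\[
2\dim V-\dim\gg+\dim\gg^{x,\varphi}=\dim V+\mod(G,J_i),
\]
that is, $\dim\gg^{x,\varphi}=\mod(G,J_i)+\dim\gg-\dim V$, which is exactly condition (ii). For (i)$\Rightarrow$(ii) one uses that a regular local ring is a domain (forcing $p$ onto a unique component) and has $\dim T_p=\dim_p$; conversely (ii)$\Rightarrow$(i) uses the general inequality $\dim T_p\muz\ge\dim_p\muz$, so that the displayed equality together with uniqueness of the component yields regularity.

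The only step carrying real content is the sign-careful identification of $\Im d_p\mu$ with $(\gg^{x,\varphi})^{\perp}$ (equivalently, of $\Ker(d_p\mu)^{\mathsf T}$ with $\gg^{x,\varphi}$); the remainder is bookkeeping with Lemma~\ref{irr_comp_muz} and the regular-local-ring criterion for smoothness. One should also keep in mind that $\muz$ need not be reduced, irreducible, or a complete intersection, so the argument must be phrased through the scheme-theoretic tangent space $\Ker d_p\mu$ and the local ring $\OO_{\muz,p}$ rather than through a Jacobian-rank statement on a variety; embedded structure, however, does not affect $\dim_p\muz$, so it causes no difficulty.
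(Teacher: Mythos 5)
Your proposal is correct and follows essentially the same route as the paper: compute $\Ker(d_p\mu)^{\mathsf T}=\gg^{x,\varphi}$ to get $\dim T_p\muz=2\dim V-\dim\gg+\dim\gg^{x,\varphi}$, then compare with the local dimension $\dim V+\mod(G,J_i)$ from Lemma~\ref{irr_comp_muz} via the regular-local-ring criterion. The paper is terser about the scheme-theoretic bookkeeping (it simply recalls that smooth points lie on a unique component), but the substance is identical.
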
 

\begin{proof}
The tangent space of $\muz$ at $z=(x,\varphi)$ is $\Ker d_{z}\mu$. The dual map of $d_z\mu$ has the following expression 
\[(d_{z}\mu)^*:\left\{\begin{array}{r c l}\gg&\rightarrow &(V\oplus V^*)^*\\ g&\mapsto &\left[(y,\psi)\mapsto \psi(g. x)+\varphi(g. y)\right] \end{array}\right.\]
In particular, $\Ker (d_z(\mu))^*=\{g\in \gg|\, g\cdot x=0=g\cdot \varphi\}=\gg^{x,\varphi}$. Hence $\dim \Ker d_z \mu=2\dim V-\dim \gg+\dim \gg^{x,\varphi}$.

Recall that smooth points on a scheme belong to a unique irreducible component. Let $(x,\varphi)$ belong to a unique irreducible component $\overline{\muzx{J_i}}$ for some $J_i$. Since $\dim \muzx{J_i}=\dim V+\mod(G,J_i)$ (Lemma \ref{irr_comp_muz}), we get that $(x,\varphi)$ is a smooth point of $\muz$ if and only if $\dim V-\dim \gg+\dim \gg^{x,\varphi}=\mod(G,J_i)$, hence the equivalence.
\end{proof}

\begin{corollary}\label{cor_locfree}
Assume that $\mod(G,V)=\dim V-\dim G$. Then 
\begin{enumerate}[label=(\roman*)]
\item $(G,V)$ is locally free.
\item $\muz$ is a complete intersection of dimension $2\dim V-\dim G$.
\item The smooth points of the scheme $\muz$ are the points $(x,\varphi)$ satisfying $\gg^{x,\varphi}=\{0\}$.
\item $\muz$ is irreducible if and only if $\mod(G,J_i)<\mod(G,V)$, for any $i$ such that $\overline{J_i}\neq V$. If it is the case, the scheme $\muz$ is also reduced
\item $\muz$ is reduced and normal if and only if it is irreducible and the following holds
\begin{equation} \label{carac_smooth_codim1}\forall i, \; \left[\mod(G,J_i)=\mod(G,V)-1 \Rightarrow \left(\exists (x,\varphi) \in \muzx{J_i}\textrm{ s.t. } \gg^{x,\varphi}=\{0\}\right)\right]
\end{equation}
\end{enumerate}
\end{corollary}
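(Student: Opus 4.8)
The plan is to deduce all five parts from Serre's criterion, using Lemma~\ref{irr_comp_muz} to control the irreducible components of $\muz$ and the tangent-space computation in the proof of Proposition~\ref{smooth_muz} to control its singularities. For (i) I would apply the definition of modality to the irreducible subvariety $Y=V$: this gives $\mod(G,V)\ge \dim V-d$, where $d=\max_{x\in V}\dim G.x$ is the generic orbit dimension, so the hypothesis forces $d\ge\dim G$, hence $d=\dim G$, i.e. local freeness. (Equivalently, the unique $i_0$ with $\overline{J_{i_0}}=V$ — unique by the second property of the cover — satisfies $m_{i_0}\ge\dim G$ via \eqref{modTVJ}, hence $m_{i_0}=\dim G$.) For (ii): $\muz$ is cut out in the $2\dim V$-dimensional affine space $V\oplus V^*$ by the $\dim G$ scalar equations $(x,\varphi)\mapsto\varphi(g_j\cdot x)$ for a basis $(g_j)$ of $\gg$, so each irreducible component has dimension at least $2\dim V-\dim G$; on the other hand Lemma~\ref{irr_comp_muz}(i),(ii) shows each component equals some $\overline{\muzx{J_i}}$ of dimension $\dim V+\mod(G,J_i)\le 2\dim V-\dim G$. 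Hence all components have dimension exactly $2\dim V-\dim G$ and $\muz$ is a complete intersection; in particular it is Cohen--Macaulay, so it satisfies Serre's condition $S_2$.

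For (iii): being a complete intersection, $\dim\OO_{\muz,z}=2\dim V-\dim G$ at every point $z=(x,\varphi)$, so $\OO_{\muz,z}$ is regular iff $\dim T_z\muz=2\dim V-\dim G$; since $\dim T_z\muz=2\dim V-\dim\gg+\dim\gg^{x,\varphi}$ (proof of Proposition~\ref{smooth_muz}) and $\dim\gg=\dim G$, this is equivalent to $\gg^{x,\varphi}=\{0\}$. For (iv): by (ii) all components of $\muz$ have the maximal dimension $\dim V+\mod(G,V)$, so Lemma~\ref{irr_comp_muz}(iii) gives a bijection between the components of $\muz$ and the $J_i$ of maximal modality; as $\mod(G,J_{i_0})=\dim V-m_{i_0}=\dim V-\dim G=\mod(G,V)$, the stratum $J_{i_0}$ always has maximal modality, so $\muz$ is irreducible precisely when $\mod(G,J_i)<\mod(G,V)$ for every $i$ with $\overline{J_i}\ne V$. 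If this holds, $\muzx{J_{i_0}}$ is a dense locally closed — hence open — subvariety of the irreducible $\muz$, and every $(x,\varphi)\in\muzx{J_{i_0}}$ has $\gg^x=0$ (orbit dimension $m_{i_0}=\dim G$ on $J_{i_0}$), hence $\gg^{x,\varphi}=0$; by (iii) these are smooth points, so $\muz$ is generically reduced, and being $S_1$ it is reduced.

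The heart of the matter is (v). Since $\muz$ is Cohen--Macaulay ($S_2$), Serre's criterion says it is normal iff it is $R_1$, i.e. $\codim_{\muz}\operatorname{Sing}(\muz)\ge 2$; and a normal scheme is reduced, so it remains to match ``$R_1$'' with ``irreducible and \eqref{carac_smooth_codim1}''. For the forward direction, $\muz$ is $\Gm$-stable under $t\cdot(x,\varphi)=(tx,t\varphi)$ and this action contracts $\muz$ to $(0,0)$, so $\muz$ is connected; being normal and connected, it is irreducible. By (iii), $\operatorname{Sing}(\muz)=\{(x,\varphi)\in\muz\mid\gg^{x,\varphi}\ne 0\}$; if some $J_i$ with $\mod(G,J_i)=\mod(G,V)-1$ violated \eqref{carac_smooth_codim1}, then $\muzx{J_i}$, which has codimension exactly $1$ in $\muz$ by Lemma~\ref{irr_comp_muz}(i), would lie entirely in $\operatorname{Sing}(\muz)$, and taking closures contradicts $R_1$. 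For the converse, assume $\muz$ is irreducible and \eqref{carac_smooth_codim1}; the singular locus $\operatorname{Sing}(\muz)$ is a proper closed subset (since $\muzx{J_{i_0}}$ is a nonempty smooth open, as in (iv)), and I would suppose for contradiction it has an irreducible component $C$ of codimension $1$. Since $C\subseteq\muz=\bigcup_i\muzx{J_i}$ with finitely many locally closed strata, $C$ meets some $\muzx{J_i}$ densely, whence $\dim\muzx{J_i}\ge\dim C=\dim\muz-1$, so $\mod(G,J_i)\in\{\mod(G,V)-1,\mod(G,V)\}$. If $\mod(G,J_i)=\mod(G,V)$ then $\overline{\muzx{J_i}}=\muz$ forces $i=i_0$, but $\muzx{J_{i_0}}$ contains no singular point — contradiction; if $\mod(G,J_i)=\mod(G,V)-1$ then $C=\overline{\muzx{J_i}}$, so all of $\muzx{J_i}$ is singular, contradicting \eqref{carac_smooth_codim1}. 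Hence $\operatorname{Sing}(\muz)$ has codimension $\ge 2$, so $\muz$ is normal (and reduced by (iv)).

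The main obstacle is the codimension bookkeeping in the converse of (v): one must rule out that a codimension-$1$ component of $\operatorname{Sing}(\muz)$ hides inside the big stratum $\overline{\muzx{J_{i_0}}}=\muz$ rather than inside one of the strata with $\mod(G,J_i)=\mod(G,V)-1$. This is exactly where local freeness (part (i)) enters: it guarantees $\gg^x=0$ for all $x\in J_{i_0}$, hence $\muzx{J_{i_0}}$ lies in the smooth locus, so no component of $\operatorname{Sing}(\muz)$ can be dense in it.
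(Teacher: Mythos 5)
Your proof is correct and follows essentially the same route as the paper: local freeness from the modality of the open stratum, the complete-intersection dimension count via Lemma~\ref{irr_comp_muz}, the tangent-space computation from Proposition~\ref{smooth_muz} for the smooth locus, and Serre's criterion ($S_2$ plus codimension of the singular locus) for (v). The only addition is your explicit $\Gm$-contraction argument showing that normal implies irreducible, a point the paper leaves implicit; everything else matches.
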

\begin{proof}
(i) Let $i_0$ be the index such that $\overline{J_{i_0}}=V$. Then $m_{i_0}$ is the maximum orbit dimension in $V$ and we get $\mod(G,V)\geqslant \mod(G,J_{i_0})=\dim V-m_{i_0}\geqslant \dim V-\dim G$. From the hypothesis, we have equalities so $m_{i_0}=\dim G$. \\
(ii) The irreducible components of maximal dimension of $\muz$ have codimension $2\dim V-(\dim V+\mod(G,V))=\dim G$ in $V\oplus V^*$. Since $\muz$ is defined in $V\oplus V^*$ by $\dim \gg$ equation, (ii) follows. \\
(iii) From (ii), $\muz$ is equidimensional and its irreducible components have dimension $2\dim V-\dim G$. 
From proof of Proposition \ref{smooth_muz}, we see that the tangent space of $\muz$ at $(x,\varphi)$ has dimension $2\dim V-\dim \gg+\dim \gg^{x,\varphi}$. The result follows.\\
(iv) We have already seen in (i) that $\mod(G,J_{i_0})=\mod(G,V)$ for $i_0$ such that $\overline{J_{i_0}}=V$. The characterization of the irreducibility of $\muz$ then follows from equidimensionality and Lemma \ref{irr_comp_muz} (iii). For $x\in J_{i_0}$, we have $\gg^x=\{0\}$. So $\muzx{J_{i_0}}$ is in the smooth locus of $\muz$ by (iii).  Hence $\muz$ is reduced as soon as it is irreducible. \\
(v) Recall now that a variety is normal if and only if it satisfies Serre's condition ($S_2$) and its singular locus has codimension at least $2$. The former is provided by (ii). Under the irreducibility assumption, the latter is equivalent to \eqref{carac_smooth_codim1}. Indeed, the singular locus of $\muz$ is a closed subset of $\bigcup_{i\neq i_0}\muzx{J_i}$. But, it follows from Lemma \ref{irr_comp_muz}(i) that an irreducible component of a closed subset of $\bigcup_{i\neq i_0}\muzx{J_i}$ of codimension $1$ in $\muz$ contains at least one $\muzx{J_i}$ with $\mod(G,J_i)=\mod(G,V)-1$. 
\end{proof}

\begin{remark} \label{rk_F0F1}In the notation of the proof of (i), we have, for a general representation $(G,V)$: $\mod(G,V)\geqslant \dim V-m_{i_0}$. The limit case $\mod(G,V)=\dim V-m_{i_0}$ corresponds to condition $(\mathcal F_0)$ of \cite{Pan94}, see (2.2) and Corollary 2.5 in \textit{loc. cit.} Then Proposition 3.1 in \textit{loc. cit.} asserts that this condition hold when $(G,V)$ is visible. In particular, hypothesis of the above corollary are satisfied when $(G,V)$ is locally free and visible.\\
Similarily, condition $(\mathcal F_1)$ in \emph{loc. cit.} is equivalent to the condition appearing in Corollary \ref{cor_locfree} (iv). Related statements to corollary \ref{cor_locfree} can be found in Theorems 2.4, 3.1 and 3.2 in \emph{loc. cit.}
\end{remark}

\section{Null-fiber of the moment map - torus case}
\label{norm_muz_tori}
In this section $V$ is an $n$-dimensional representation of an $r$-dimensional torus $G=T\cong (\Ks)^r$. The weight space $X^*(T)$ is isomorphic to $\ZZ^r$. We let $W_{\QQ}:=X^{*}(T)\otimes_{\ZZ}\QQ$. It is a $\QQ$-vector space of dimension $r$. Recall that the irreducible representations of $T$ are one-dimensional. So we can decompose $V=\bigoplus_{i=1}^n V_{i}$, with $T$ acting on $V_i\cong \K$ with weight $s_i=\left(s_i^j\right)_{j\in[\![1,r]\!]}\in X^*(T)$. That is, the action of an element $t=(t_j)_{j\in [\![1,r]\!]}\in T$ on an element $v=(v_i)_{i\in[\![1,n]\!]} \in V$ is given by \[t\cdot v=\left(\left(\prod_{j=1}^r t_j^{(s_i^j)}\right)v_i\right)_i.\]
We introduce the $n\times r$ matrix $S:=(s_i^j)_{i,j}$. Given $I\subset [\![1,n]\!]$ or $i\in [\![1,n]\!]$, 
we also define the $\#I\times r$-matrix $S_I:=(s_i^j)_{(i,j)\in I\times[\![1,r]\!]}$ and the $(n-1)\times r$-matrix $S_{\hat \imath}:=S_{[\![1,n]\!]\setminus\{i\}}$.


In the sequel we need a partition of $V$ by suitable strata. These are indexed by subsets $I\subset[\![1,n]\!]$ and are given by
\[J_I:=\{(v_i)_i\in V|\, v_i\neq0\Leftrightarrow i\in I\}.\]
\begin{remark}
Each $J_I$ is a locally closed irreducible $T$-stable subvariety whose closure is $V_I:=\{(v_i)_i\in V|\, \forall i\notin I, v_i=0\}$.\\
Moreover, there is a $T$-equivariant action of $(\Cs)^n$ on $V$ given by $(\lambda_i)_i\cdot (v_i)_i=(\lambda_iv_i)_i$. The orbits of this action are precisely the strata $J_I$. Hence many geometric properties of the action of $T$ on a point $v\in V$ (dimension of orbit, nilpotency, semisimplicity, \dots) are preserved along the stratum $J_I$ containing $v$. It will then be convenient to make the computations only at $v_I:=(\delta_{i\in I})_i$ where $\delta$ is the Kronecker symbol. \\
It is now clear that $V=\bigsqcup_I J_I$ is a cover satisfying the assumptions of Section \ref{gener}. 
\end{remark}
For $v\in V$, writing $X_v:=\scalebox{0.7}{$\begin{pmatrix}v_1& &\\&\!\!\ddots\!\!&\\& & v_n\end{pmatrix}$}$, we have
\begin{equation}\tf\cdot v=\langle (s_i^jv_i)_{i\in[\![1,n]\!]}| \, j\in[\![1,r]\!]\rangle=\Im(X_vS)=X_v\Im(S).\label{tspace}\end{equation}
where $\Im(S)$ (resp. $\Im(X_vS)$) denote the image space of the linear maps $\K^r\rightarrow\K^n$ associated to $S$ (resp. $X_vS$) in the canonical basis.
It follows from \eqref{tspace} that, for $v\in J_I$, we have 
\begin{equation}\label{tspaceI}\dim T.v=\rk S_I.\end{equation} In particular,  the modality of $J_I$ is given by 
\begin{equation}\label{calcmodJI}\mod(T,J_I)=\# I-\rk S_I=\dim \Ker({}^tS_I).\end{equation}
where $\Ker({}^tS_I)$ is the kernel of the linear map $\K^{\#I}\rightarrow \K^r$ associated to ${}^tS_I$. Via the natural identifications $\K^{\#I}\cong V_I\subset V\cong \K^n$, $\Ker({}^tS_I)$ is identified with $\Ker({}^tS)\cap V_I$. Assume that $I_1\subset I_2$ are two subsets of $[\![1,n]\!]$, then $\Ker({}^tS)\cap V_{I_1} \subset \Ker({}^tS)\cap V_{I_2}$. So \begin{equation}\label{modJI1I2}I_1\subset I_2\;\;\Rightarrow \;\;\mod(T,J_{I_1})\leqslant \mod(T,J_{I_2}), 
\end{equation}
From \eqref{modTVJ}, we get
\begin{equation}
\label{modTV}
\mod(T,V)=\mod(T, J_{[\![1,n]\!]})=\dim V-\rk S.
\end{equation}

We can apply \eqref{def_muzx} to $X=J_I$ to define $\muzx{J_I}$. This yields a partition of $\muz$ into a disjoint union of irreducible locally closed (Lemma~\ref{irr_comp_muz}) subsets \[\muz=\bigsqcup_I\; \muzx{J_I}\]

\begin{theorem}\label{irrcomp}Assume that $(T,V)$ is a reprentation with $T$ a torus.\\
Then, under above notation:
\begin{enumerate}[label=(\roman*)]
\item $\muz$ is a reduced complete intersection of dimension $2\dim V-\rk S$.  
\item The irreducible components of $\muz$ are the subsets of the form $\overline{\muzx{J_I}}$ with $\rk S-\rk S_I=n-\#I$.
\item $\muz$  is irreducible if and only if, for any $i\in[\![1,n]\!]$, we have $\rk(S_{\hat\imath})=\rk S$.
\item $\muz$ is normal if and only if it is irreducible.
\end{enumerate}
\end{theorem}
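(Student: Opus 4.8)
The plan is to deduce everything from Corollary~\ref{cor_locfree} applied to the cover $V=\bigsqcup_I J_I$, using the combinatorial formulas \eqref{calcmodJI}, \eqref{modJI1I2}, \eqref{modTV} for the modalities. First I would observe that \eqref{modTV} gives $\mod(T,V)=\dim V-\rk S$, and that (in the notation of the proof of Corollary~\ref{cor_locfree}(i)) the big stratum $J_{[\![1,n]\!]}$ satisfies $m_{i_0}=\rk S$. For locally freeness and the complete-intersection statement I would need $\mod(T,V)=\dim V-\dim T=\dim V-r$; this holds precisely when $\rk S=r$, i.e.\ when the weights span $W_\QQ$. If $\rk S<r$ the torus does not act with generic finite stabilizer, but one can pass to the quotient torus $T/T_0$ where $T_0$ is the (positive-dimensional) generic stabilizer: the representation, the matrix $S$, the strata $J_I$, the moment map fiber $\muz$ (up to the obvious trivial factor coming from the kernel directions, which contribute only to $V^*$ via $(\gg.x)^\perp$) are all unchanged except that $r$ is replaced by $\rk S$. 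So without loss of generality $\rk S=r$, and then Corollary~\ref{cor_locfree}(i)--(ii) gives (i) once we check reducedness. Reducedness follows from Corollary~\ref{cor_locfree}(iv): by \eqref{modJI1I2} every stratum with $\overline{J_I}\neq V$ has $I\subsetneq[\![1,n]\!]$, hence $\mod(T,J_I)\leqslant \mod(T,J_{[\![1,n]\!]})=\mod(T,V)$; but here we only need that $\muzx{J_{[\![1,n]\!]}}$ lies in the smooth locus (true since $\tf^{v}=0$ there), which already forces $\muz$ reduced as in the proof of Corollary~\ref{cor_locfree}(iv). Hence (i).

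For (ii), Lemma~\ref{irr_comp_muz}(ii)--(iii) together with the equidimensionality from (i) say that the irreducible components are exactly the $\overline{\muzx{J_I}}$ with $\mod(T,J_I)=\mod(T,V)$; by \eqref{calcmodJI} and \eqref{modTV} this equation reads $\#I-\rk S_I=n-\rk S$, which is the stated condition.

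For (iii), irreducibility by Corollary~\ref{cor_locfree}(iv) means $\mod(T,J_I)<\mod(T,V)$ for every $I$ with $\overline{J_I}\neq V$, i.e.\ every proper $I$. By \eqref{modJI1I2} it suffices to test this on the maximal proper subsets $I=[\![1,n]\!]\setminus\{i\}$, and there the condition $\mod(T,J_{\hat\imath})<\mod(T,V)$ becomes $(n-1)-\rk S_{\hat\imath}<n-\rk S$, i.e.\ $\rk S_{\hat\imath}=\rk S$ (the inequality $\rk S_{\hat\imath}\leqslant\rk S$ always holding). This gives (iii).

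The real content is (iv): $\muz$ normal $\iff$ irreducible. The ``only if'' direction is trivial since $\muz$ is equidimensional (a normal scheme that is not irreducible would be disconnected in codimension $0$, impossible here as the strata are glued along $V_I$'s; more simply, normal $\Rightarrow$ the components are disjoint, contradicting $0\in\overline{\muzx{J_I}}$ for all $I$). For ``if'', assume $\muz$ irreducible and verify \eqref{carac_smooth_codim1}: for each $I$ with $\mod(T,J_I)=\mod(T,V)-1$, i.e.\ $\#I-\rk S_I=n-\rk S-1$, I must exhibit $(x,\varphi)\in\muzx{J_I}$ with $\tf^{x,\varphi}=0$. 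Here is where I expect the main obstacle, and where the torus structure must be exploited concretely. Take $x=v_I$; then $\tf^{v_I,\varphi}=\{\xi\in\tf\mid \xi\cdot v_I=0,\ \xi\cdot\varphi=0\}$. The condition $\xi\cdot v_I=0$ cuts $\tf$ down to the subspace dual to $\Im S_I$, of dimension $r-\rk S_I=1+ (n-\#I) - (n-\rk S) +\rk S - r$... — more precisely $r-\rk S_I$, which under $\rk S=r$ and the codimension-$1$ hypothesis equals $r-\rk S_I$ with $\rk S_I=\#I-(n-r-1)$. One then has to choose $\varphi\in(\tf.v_I)^\perp$ so that the residual action of this $(r-\rk S_I)$-dimensional subtorus direction on $\varphi$ is faithful; since $\varphi$ ranges over the full weight space $V_I^*\oplus(\text{stuff})$ — concretely $\varphi\in(\tf.v_I)^\perp$ has as many free coordinates as $\dim V - \rk S_I$ — the weights available on $\varphi$ are (minus) the $s_i$ for $i\notin I$ plus the full $S_I$-directions, and one needs these to contain $r-\rk S_I$ independent characters of the residual subtorus. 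The key point to establish is a combinatorial lemma: \emph{if $\rk S=r$ and $\#I-\rk S_I=n-r-1$ and $\muz$ is irreducible (equivalently $\rk S_{\hat\jmath}=r$ for all $j$), then for a generic $\varphi\in(\tf.v_I)^\perp$ one has $\tf^{v_I,\varphi}=0$}. I would prove this by a dimension count on the incidence variety $\{(\text{subtorus direction},\varphi)\}$, showing the bad locus (where some nonzero $\xi$ fixes $\varphi$) has codimension $\geqslant 1$ in $(\tf.v_I)^\perp$; the irreducibility hypothesis, fed in via (iii), is exactly what guarantees enough nonzero weights $s_j$ ($j\notin I$) survive to break every one-parameter subgroup. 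Once this lemma is in place, \eqref{carac_smooth_codim1} holds and Corollary~\ref{cor_locfree}(v) delivers normality.
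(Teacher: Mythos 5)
Your overall route is the paper's: reduce to the locally free case by dividing out the kernel of the representation, feed the modality formulas \eqref{calcmodJI}--\eqref{modTV} into Lemma~\ref{irr_comp_muz} and Corollary~\ref{cor_locfree}, and reduce normality to condition \eqref{carac_smooth_codim1}. Parts (ii) and (iii) are correct and argued as in the paper. But there are two genuine gaps. First, for the reducedness claim in (i) you only show that $\muzx{J_{[\![1,n]\!]}}$ lies in the smooth locus; that proves the \emph{big} component is generically reduced, which suffices only when $\muz$ is irreducible (this is exactly the scope of Corollary~\ref{cor_locfree}(iv)). Theorem~\ref{irrcomp}(i) asserts reducedness unconditionally, so in the non-irreducible case you must also exhibit a smooth point on every maximal-dimensional component $\overline{\muzx{J_I}}$ with $I\neq[\![1,n]\!]$, and your argument does not do this. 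Second, the crux of (iv) — producing $(x,\varphi)\in\muzx{J_I}$ with $\tf^{x,\varphi}=0$ for each codimension-one stratum — is left as an unproven ``combinatorial lemma'' with only a sketched incidence-variety dimension count; as written the proof is incomplete precisely at the step you yourself identify as the main obstacle.

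Both gaps are closed at once by the paper's claim \eqref{smoothloc}, which is stronger than what you aim for and needs neither genericity nor the irreducibility hypothesis: for \emph{every} $I\subset[\![1,n]\!]$ take $x=v_I=(\delta_{i\in I})_i$ and $\varphi=(\delta_{i\notin I})_i\in V^*$. Since $\tf\cdot x\subset V_I$ and $\varphi$ is supported on the complement of $I$, one has $\varphi\in(\tf\cdot x)^{\perp}$, so $(x,\varphi)\in\muzx{J_I}$; and any $t\in T$ fixing $(x,\varphi)$ satisfies $s_i(t)=1$ for $i\in I$ and $s_i(t)^{-1}=1$ for $i\notin I$, hence lies in the kernel of the representation, which is finite after the reduction to the locally free case. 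Thus $\tf^{x,\varphi}=0$. Applied to all maximal-dimensional strata this gives reducedness in general, and applied to the codimension-one strata it verifies \eqref{carac_smooth_codim1}. Note in particular that your intuition that ``the irreducibility hypothesis is exactly what guarantees enough nonzero weights survive'' is misplaced: the stabilizer computation above works for arbitrary $I$, with local freeness alone doing all the work.
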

\begin{proof}

By Lemma \ref{irr_comp_muz}, the irreducible components of $\muz$ of maximal dimension $\dim V+\mod(T,V)=2\dim V-\rk S$ \eqref{modTV} are the $\overline{\muzx{J_I}}$ with $\#I-\rk S_I=n-\rk S$ \eqref{calcmodJI}. 
This last condition is the one expressed in (ii).

Then, from \eqref{modJI1I2}, $\muz$ has a single irreducible component of maximal dimension if and only if $\mod(T,J_I)<\mod(T,J_{[\![1,n]\!]})$ for any $I$ of the form $[\![1,n]\!]\setminus\{i\}$. If $I=[\![1,n]\!]\setminus\{i\}$, it follows from \eqref{calcmodJI}, that \[\mod(T,J_I)=\left\{\begin{array}{l l}\mod(T,J_{[\![1,n]\!]})-1& \textrm{ if } \rk S_{\hat\imath}=\rk S\\
\mod(T,J_{[\![1,n]\!]})& \textrm{ if } \rk S_{\hat\imath}=\rk S-1 \end{array}\right..\]

In the next paragraph, we show that $\muz$ is a complete intersection, hence equidimensional. Together with the above arguments, this implies that (ii) and (iii) holds.

Consider the kernel of the representation $(T,V)$, $K:=\{t\in T| \, \forall i, \,s_i(t)=1\}$. Its Lie algebra is $\kk:=\{t\in \tf\cong \K^r|S t=0\}=\Ker(S)$. Applying \eqref{tspaceI} to a general element $v\in V$, we get $\dim (T/K)\cdot v=\dim T\cdot v=\rk S=\dim \tf-\dim \kk$. So the representation $(T/K,V)$ is not only faithul but locally free. Note that $T/K$ is also a torus and that $\muz$ is the same for $(T,V)$ and $(T/K,V)$. Therefore, there is no loss of generality in assuming that $(T,V)$ is locally free in order to prove (i) and (iv). Under this assumption, important features are $\rk S=\dim T$ \eqref{tspaceI} and $\mod(T,V)=\dim V-\dim T$ \eqref{modTV}. In particular, we can apply Corollary \ref{cor_locfree} and get that $\muz$ is a complete intersection.

Assuming that $(T,V)$ is locally free, we claim that \begin{equation}\label{smoothloc} \forall I\subset [\![1,n]\!],\; \exists (x,\varphi)\in \muzx{J_I},\; \tf^{(x,\varphi)}=\{0\}.\end{equation}
Under this claim, (ii) and Corollary \ref{cor_locfree} (iii) imply that each irreducible component of $\muz$ has smooth points. Hence $\muz$ is reduced and this ends the proof of (i). The claim also implies that condition \eqref{carac_smooth_codim1} is automatically satisfied so (iv) follows from Corollary \ref{cor_locfree} (v).

\textit{Proof of claim \eqref{smoothloc}}
Let $I\subset [\![1,n]\!]$. The decomposition $V=\bigoplus_i V_i$ induces to a decomposition $V^*=\bigoplus V_i^*\cong \K^n$. We set $x=v_I=(\delta_{i\in I})_i\in J_i\subset V$ and $\varphi:=(\delta_{i\notin I})_i\in V^*$ with respect to these decompositions. Since $\tf\cdot x\subset V_I$, we have $\varphi\in (\tf\cdot x)^{\perp}$ so $(x,\varphi)\in \muzx{J_I}$. 
Also, let $t\in T$ stabilizing ${(x,\varphi)}$. Then $s_i(t)=1$ for any $i\in I$ and $1/(s_i(t))=1$ for any $i\notin I$. Hence $t$ is in the kernel of the representation. By local freeness, we get $\tf^{x,\varphi}=\{0\}$.
\end{proof}

%

\section{Symplectic reduction for tori representations}
\label{norm_symp_tori}
We keep the notation and setting of the previous section. The aim of this section is to study the symplectic reduction $(V\oplus V^*)\SPR T:=\muz\GIT T$. We prove Conjecture~\ref{conjA} in the context of representations of tori (Corollary~\ref{conjA_tori}). Moreover, we show that that the conjecture is essentially false for non-visible action (Proposition~\ref{bijection}), but that the symplectic reduction remains normal in this case (Proposition~\ref{normal_quot}).

In this section, many properties depend on considerations on the convex hull of some of the weights $s_i\in X^*(T)$. If $\emptyset \neq A\subset X^*(T)\subset  W_{\QQ}=X^*(T)\otimes_{\ZZ}\QQ$, we denote by $CH(A)$, the convex hull of $A$ in $W_{\QQ}$. We also denote by $\CHc(A)$ the interior (for the classical topology) of $CH(A)$ in $\langle A \rangle$. Alternatively, $\CHc(A)$ is the set of convex combination $\sum_{a\in A}\alpha_aa$ with each $\alpha_a$ positive. It is an elementary result on convex hull that $0\notin CH(A)$ (resp. $0\notin \CHc(A)$) if and only if $\exists \phi\in W_{\QQ}^*$ s. t. $\forall\, a\in A$, $\phi(a)>0$ (resp. $\phi(a)\geqslant 0$ and $\phi(A)\neq\{0\}$). Note that such $\phi$ can be multiplied by a sufficiently large integer so that we can always assume $\varphi\in X^*(T)$. In our context of weights on a group, this translates as follows.
\begin{lemma}\label{caracCH}
Under above notation, $0\notin CH(A)$ (resp. $0\notin \CHc(A)$) if and only if there exists a one-parameter subgroup $\Cs\stackrel{\rho}{\hookrightarrow} T$ such that $\forall a\in A$, $\exists b_a\in \NN^*$ (resp. $b_a\in \NN$ with at least one $a\in A$ s.t. $b_a\neq0$), $\forall t\in \Cs$, $a(\rho(t))=t^{b_a}$.
\end{lemma}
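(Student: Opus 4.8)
The plan is to translate the elementary convex-geometry statement about $0\notin CH(A)$ (resp. $0\notin\CHc(A)$), already recalled in the paragraph preceding the lemma, into the language of one-parameter subgroups. The bridge between the two formulations is the standard identification between $X^*(T)$ and $X^*(T)$, and between $\Cs$-one-parameter subgroups $\rho\colon\Cs\hookrightarrow T$ and elements of the cocharacter lattice $X_*(T)\cong\ZZ^r$, under which the pairing $X^*(T)\times X_*(T)\to\ZZ$, $(a,\rho)\mapsto\langle a,\rho\rangle$ (defined by $a(\rho(t))=t^{\langle a,\rho\rangle}$) extends $\QQ$-bilinearly to the perfect pairing $W_\QQ\times W_\QQ^*\to\QQ$. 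So an element $\phi\in W_\QQ^*$ with $\phi\in X^*(T)$ (after clearing denominators) corresponds exactly to a one-parameter subgroup $\rho$ with $\langle a,\rho\rangle=\phi(a)$ for all $a$.

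Concretely, I would argue as follows. First recall, as stated in the excerpt, that $0\notin CH(A)$ is equivalent to the existence of $\phi\in W_\QQ^*$ with $\phi(a)>0$ for all $a\in A$, and that $0\notin\CHc(A)$ is equivalent to the existence of $\phi\in W_\QQ^*$ with $\phi(a)\geqslant 0$ for all $a\in A$ and $\phi|_A\not\equiv 0$; moreover such a $\phi$ may be rescaled by a large positive integer so that $\phi\in X^*(T)$. Then, using the perfect pairing $W_\QQ\cong W_\QQ^*$ and the identification of $X_*(T)$ with the integral points, I would let $\rho$ be the one-parameter subgroup corresponding to this integral $\phi$, so that $a(\rho(t))=t^{\langle a,\rho\rangle}=t^{\phi(a)}$ for all $t\in\Cs$ and all $a\in A$. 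Setting $b_a:=\phi(a)$ gives $b_a\in\NN^*$ in the $CH$ case (positivity), and $b_a\in\NN$ with at least one $b_a\neq 0$ in the $\CHc$ case. The converse is immediate: given such a $\rho$ with exponents $b_a$, the character $\phi\in X^*(T)\subset W_\QQ^*$ defined by $\phi(a)=b_a$ on $A$ (it is the linear functional $\langle -,\rho\rangle$ restricted to $\langle A\rangle$, extended arbitrarily) witnesses $0\notin CH(A)$ (resp. $0\notin\CHc(A)$) by the same elementary equivalence read backwards.

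The only genuine point of care — and I would call it the ``main obstacle,'' though it is mild — is the integrality/denominator-clearing step: the separating functional $\phi$ produced by the convex-geometry separation theorem lives a priori only in $W_\QQ^*$, and one must multiply it by a common denominator (a sufficiently large positive integer) to land in the cocharacter lattice $X_*(T)$ before it defines an honest algebraic one-parameter subgroup $\Cs\hookrightarrow T$. Multiplying by a \emph{positive} integer preserves the strict inequalities $\phi(a)>0$ (resp. the non-strict ones together with $\phi|_A\neq 0$), so no information is lost; this is exactly the remark made in the text just before the lemma. I would also note, for the statement to be literally as written, that $b_a=\langle a,\rho\rangle$ is automatically a nonnegative (resp. positive) integer once $\phi$ is integral and nonnegative (resp. positive) on $A$, so nothing further is needed. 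Everything else is a direct unwinding of definitions, so the proof is essentially a two-line dictionary translation plus the denominator remark.
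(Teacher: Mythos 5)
Your proof is correct and follows exactly the route the paper intends: the lemma is presented there as a direct translation of the separation criterion stated in the preceding paragraph, via the character--cocharacter pairing and the denominator-clearing remark, which is precisely what you carry out (and you correctly place the integral separating functional in the cocharacter lattice $X_*(T)$, where the paper's phrasing is slightly loose). Nothing is missing.
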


\begin{proposition} \label{nilpss} Let $\emptyset\neq I\subseteq [\![1,n]\!]$ and $v\in J_I$. 
\begin{enumerate}
\item $v\in V$ is nilpotent if and only if $0\notin CH(\{s_i|i\in I\})$.
\item $v\in V$ is semisimple if and only if $0\in \CHc(\{s_i|i\in I\})$.
\end{enumerate}
\end{proposition}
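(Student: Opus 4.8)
The plan is to reduce the statement to the combinatorial criterion of Lemma \ref{caracCH} via the Hilbert--Mumford criterion, after first reducing to a point $v = v_I$ that is convenient for computation. By the remark following the definition of $J_I$, nilpotency and semisimplicity of $v\in J_I$ depend only on $I$ (these properties are constant along the $(\Cs)^n$-orbit $J_I$, since that torus action is $T$-equivariant and its orbits are the strata), so it suffices to treat $v = v_I = (\delta_{i\in I})_i$. For (1), $v$ is nilpotent iff $0\in\overline{T\cdot v}$. The Hilbert--Mumford criterion says this holds iff there is a one-parameter subgroup $\rho:\Cs\hookrightarrow T$ with $\lim_{t\to 0}\rho(t)\cdot v = 0$. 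Writing $\rho$ through its pairing with the weights, $\rho(t)\cdot v$ has $i$-th coordinate $t^{\langle s_i,\rho\rangle}$ for $i\in I$ (and $0$ otherwise), so the limit is $0$ iff $\langle s_i,\rho\rangle > 0$ for all $i\in I$. By the elementary convexity fact recalled before Lemma \ref{caracCH} (or directly by Lemma \ref{caracCH} itself), the existence of such a $\rho\in X^*(T)^\vee$ is equivalent to $0\notin CH(\{s_i\mid i\in I\})$. This gives (1).

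For (2), $v$ is semisimple iff $T\cdot v$ is closed. One direction: if $0\in\CHc(\{s_i\mid i\in I\})$, I would produce a linear functional, equivalently a cocharacter $\rho$, with $\langle s_i,\rho\rangle = 0$ for all $i\in I$ and the subtorus $\rho(\Cs)$ acting trivially on $V_I$; more usefully, I want to exhibit $T\cdot v$ as closed. Here the cleanest route is: the closure $\overline{T\cdot v}\setminus T\cdot v$, if nonempty, contains a closed orbit, and by Hilbert--Mumford any boundary point is reached by some cocharacter $\rho$ with $\lim_{t\to0}\rho(t)\cdot v$ existing, i.e. $\langle s_i,\rho\rangle\geqslant 0$ for all $i\in I$; the limit point lies in $J_{I'}$ for $I' = \{i\in I\mid \langle s_i,\rho\rangle = 0\}\subsetneq I$. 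Such a proper boundary stratum exists iff there is $\rho\neq 0$ on $\langle\{s_i\mid i\in I\}\rangle$ with $\langle s_i,\rho\rangle\geqslant 0$ for all $i\in I$ and not all zero — which by the parenthetical form of the convexity criterion before Lemma \ref{caracCH} is exactly the negation of $0\in\CHc(\{s_i\mid i\in I\})$. Hence $T\cdot v$ is not closed iff $0\notin\CHc(\{s_i\mid i\in I\})$, which is (2). One should be slightly careful that the relevant cocharacters may be taken in the quotient torus $T/K$ (killing the kernel of the representation), but since $\CHc$ and $CH$ are computed inside $\langle A\rangle$ this causes no trouble; alternatively work with rational cocharacters and clear denominators as in Lemma \ref{caracCH}.

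The main obstacle I anticipate is the semisimple direction of (2): one must argue that a non-closed orbit $T\cdot v$ genuinely has a boundary stratum of the form $J_{I'}$ with $I'\subsetneq I$ reachable by a one-parameter subgroup, and conversely that the existence of such a degeneration forces non-closedness — i.e. that one cannot have $\langle s_i,\rho\rangle\geqslant 0$ on all of $I$ with some strict inequality yet still have a closed orbit. This is where the precise equivalence ``$0\in\CHc(A)$ iff no nonzero $\phi$ with $\phi(A)\geqslant 0$'' (the second parenthetical statement preceding Lemma \ref{caracCH}) must be invoked carefully, together with the fact that $\lim_{t\to 0}\rho(t)\cdot v$ and $\lim_{t\to\infty}\rho(t)\cdot v$ cannot both equal $v$ unless $\rho$ is trivial on $V_I$. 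The nilpotent case (1) is routine once Hilbert--Mumford is set up.
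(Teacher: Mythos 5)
Your proposal is correct and follows essentially the same route as the paper: reduce to the Hilbert--Mumford criterion (for nilpotency, and for non-closedness of the orbit via a limit landing in a strictly smaller stratum $J_{I'}$, $I'\subsetneq I$, which exists because all of $J_I$ has constant orbit dimension), then translate the sign conditions on the exponents $b_i=\langle s_i,\rho\rangle$ into the convex-hull statements via Lemma~\ref{caracCH}. The extra care you take with the semisimple direction of (2) is exactly the content the paper compresses into its first sentence.
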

\begin{proof}
First, recall that the elements of $J_I$ all share the same orbit dimension. In particular, an element $y\in \overline{T.v}\setminus T.v$ lies in $J_{I'}$ for some $I'\subsetneq I$.
Then, from Hilbert-Mumford criterion, $v=(v_i)_i\in V\setminus \{0\}$ is nilpotent (resp. is not semi-simple) if and only if there exists a one-parameter subgroup $\Cs\stackrel{\rho}{\hookrightarrow} T$ such that $\rho(t).v_i=t^{b_i}v_i$ with  $b_i$ positive for each $i\in I$ (resp. non-negative for each $i\in I$ and at least one $b_i$ non-zero) . Since $t^{b_i}=s_i(\rho(t))$, the result then follows from Lemma \ref{caracCH}.
\end{proof}

\begin{corollary}\label{carac_stable}
The representation $(T,V)$ is stable if and only if $0\in \CHc(\{s_i|i\in [\![1,n]\!]\})$.
\end{corollary}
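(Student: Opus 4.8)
The plan is to reduce everything to the single dense open stratum $J_{[\![1,n]\!]}$ and then quote Proposition~\ref{nilpss}(2). The key structural observation is that $J_{[\![1,n]\!]}=\{v\in V\mid v_i\neq 0\text{ for all }i\}$ is open in $V$ — its complement is the finite union $\bigcup_i V_{\hat\imath}$ of proper linear subspaces — and dense, since its closure is $V_{[\![1,n]\!]}=V$. Consequently every nonempty open subset of $V$ meets $J_{[\![1,n]\!]}$. Recall also, from the remark following the definition of the $J_I$, that nilpotency and semisimplicity are constant along each stratum $J_I$; in particular $(T,V)$ is stable if and only if \emph{some} (equivalently, \emph{every}) point of $J_{[\![1,n]\!]}$ is semisimple.

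For the forward implication, I would argue: if $0\in\CHc(\{s_i\mid i\in[\![1,n]\!]\})$, then Proposition~\ref{nilpss}(2) applied with $I=[\![1,n]\!]$ shows that every $v\in J_{[\![1,n]\!]}$ is semisimple. Since $J_{[\![1,n]\!]}$ is open in $V$, this exhibits an open subset of $V$ consisting of semisimple elements, so $(T,V)$ is stable.

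For the converse, suppose $(T,V)$ is stable, so there is a nonempty open $\Omega\subseteq V$ all of whose elements are semisimple. By density of $J_{[\![1,n]\!]}$ we have $\Omega\cap J_{[\![1,n]\!]}\neq\emptyset$, so there exists a semisimple $v\in J_{[\![1,n]\!]}$. Applying Proposition~\ref{nilpss}(2) once more with $I=[\![1,n]\!]$ yields $0\in\CHc(\{s_i\mid i\in[\![1,n]\!]\})$, as desired.

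There is no genuine obstacle here: the corollary is essentially a restatement of Proposition~\ref{nilpss}(2) at the generic stratum. The only point requiring a word of justification is that it suffices to test semisimplicity on $J_{[\![1,n]\!]}$ — which follows from the openness and density of that stratum together with the stratum-constancy of semisimplicity already recorded above.
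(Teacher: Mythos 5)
Your proof is correct and is exactly the argument the paper leaves implicit: the corollary is Proposition~\ref{nilpss}(2) applied to the open dense stratum $J_{[\![1,n]\!]}$, using that semisimplicity is constant along strata and that any open set of semisimple elements must meet this stratum. Nothing is missing.
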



Given $(T,V)$ and associated weights $(s_i)_{i\in [\![1,n]\!]}$, we split $[\![1,n]\!]$ into \begin{equation}I_d:=\{i\in [\![1,n]\!]| \, \rk S_{\hat\imath}=\rk S\}, \quad I_f:=I\setminus I_d=\{i\in [\![1,n]\!]| \, \rk S_{\hat\imath}=\rk S-1\}.\label{def_If}\end{equation}

\begin{proposition}\label{normal_quot}
There is an isomophism $(V\oplus V^*)\SPR T\cong (V_{I_d}\oplus V_{I_d}^*)\SPR T$. \\ In particular, $(V\oplus V^*)\SPR T$ is normal. 
\end{proposition}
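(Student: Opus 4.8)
The plan is to show that the "finite" indices $I_f$ contribute nothing to the symplectic reduction, by exhibiting a linear contraction (a suitable one-parameter subgroup action) that retracts $\muz$ onto $\muzx{V_{I_d}\oplus V_{I_d}^*}$ at the level of categorical quotients. First I would note that $V_{I_d}$ is a $T$-stable subspace, so $(T,V_{I_d})$ is again a torus representation and $(V_{I_d}\oplus V_{I_d}^*)\SPR T$ makes sense; by Theorem~\ref{irrcomp} applied to this subrepresentation, its null-fiber is a reduced complete intersection and, crucially, $\muz$ for $(T,V_{I_d})$ is \emph{irreducible} — indeed for each $i\in I_d$ one checks (using \eqref{def_If} and the fact that removing a column of $S$ that lies in the span of the others does not drop the rank) that the analogue of the criterion in Theorem~\ref{irrcomp}(iii) holds for $V_{I_d}$, so that null-fiber is in addition normal. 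This will give the "in particular" clause once the isomorphism is established.

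For the isomorphism itself, the key step is to produce, for each $i\in I_f$, a one-parameter subgroup of $(\Ks)^n\times T$ (or just of the torus $(\Ks)^n$ acting $T$-equivariantly as in the Remark after the definition of $J_I$) whose action on $\muz$ has the effect of sending the $V_i$- and $V_i^*$-coordinates to $0$ in the limit, while fixing everything else and preserving $\muz$. Concretely, since $i\in I_f$ means $s_i\notin\langle s_j: j\neq i\rangle$, there is $\phi\in W_\QQ^*$, i.e. a one-parameter subgroup $\rho_i$ of $T$, with $\phi(s_i)>0$ and $\phi(s_j)=0$ for all $j\neq i$; combining $\rho_i$ with the $i$-th coordinate $\Ks\subset(\Ks)^n$ one arranges a $\Ks$-action scaling $v_i$ by $t$, $\varphi_i$ by $t^{-1}$ (or by $t$, after also using the symmetry $V\leftrightarrow V^*$), and fixing all other coordinates. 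One then checks this action preserves $\muz$ — for a torus the moment map condition $\varphi\in(\tf\cdot v)^\perp$ decouples coordinatewise enough that scaling the $i$-th pair compatibly stays in the fiber — and that the limit as $t\to 0$ lands in $\muzx{V_{[\![1,n]\!]\setminus\{i\}}}$. Iterating over all $i\in I_f$ contracts $\muz$ onto $\muzx{V_{I_d}\oplus V_{I_d}^*}$.

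Having such a $\Ks$-action that is trivial on the quotient, the standard fact is that $\K[\muz]^T$ is unchanged: a $T$-invariant regular function $f$ on $\muz$ is constant along the $\Ks$-orbit closures, hence equals its value on the limit point, so restriction to $\muzx{V_{I_d}\oplus V_{I_d}^*}$ is injective on invariants; it is surjective because $V_{I_d}\oplus V_{I_d}^*$ is a $T$-stable direct summand of $V\oplus V^*$ with $T$-stable complement, giving a $T$-equivariant retraction $V\oplus V^*\to V_{I_d}\oplus V_{I_d}^*$ that carries $\muz$ into (and restricts to the identity on) the smaller null-fiber. Tracking through, these two maps are mutually inverse on invariants, yielding $(V\oplus V^*)\SPR T\cong (V_{I_d}\oplus V_{I_d}^*)\SPR T$ as affine (indeed Poisson) varieties. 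The main obstacle I anticipate is the verification that the chosen $\Ks$-action genuinely stabilizes $\muz$ as a scheme and that its limits land in the smaller stratum rather than merely in its closure — this needs the explicit description $\muz=\{(x,\varphi): \varphi\in(\tf\cdot x)^\perp\}$ together with the decoupled nature \eqref{tspace} of $\tf\cdot x$, and care that the contraction is compatible simultaneously with all the moment-map equations; once that is in hand, combined with normality of the smaller null-fiber from Theorem~\ref{irrcomp}, the normality of the quotient follows since a categorical quotient of a normal affine variety by a reductive group is normal.
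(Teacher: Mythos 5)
Your overall strategy coincides with the paper's: for each $i\in I_f$ use a one-parameter subgroup dual to $s_i$ to show that the quotient only sees $V_{I_d}\oplus V_{I_d}^*$, then deduce normality by checking the criterion of Theorem~\ref{irrcomp}(iii) for $(T,V_{I_d})$. However, there is a genuine gap exactly at the step you flag as the ``main obstacle''. A subgroup $\rho_i\subset T$ with $s_i(\rho_i(t))=t^{b}$, $b>0$, and $s_j(\rho_i(t))=1$ for $j\neq i$ necessarily scales $\varphi_i$ by $t^{-b}$, so $\lim_{t\to0}\rho_i(t)\cdot(x,\varphi)$ does not exist unless $\varphi_i=0$. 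Your proposed remedy --- twisting by the $i$-th factor of $(\Ks)^n$ so that $\varphi_i$ is also contracted --- breaks the argument: that auxiliary torus is not contained in $T$, so a $T$-invariant function has no reason to be constant along the orbits of the combined $\Ks$-action, and injectivity of restriction on invariants no longer follows. The correct resolution (the paper's) is to stay inside $T$ and observe that the missing hypothesis is automatic on $\muz$: for $i\in I_f$ one has $s_i\notin\langle s_j\mid j\neq i\rangle$, so if $x_i\neq0$ there is $t\in\langle s_j\mid j\neq i\rangle^{\perp}\subset\tf$ with $s_i(t)=1$, whence $V_{\{i\}}\subseteq\tf\cdot x$ and the moment map equation forces $\varphi_i=0$; then the $t\to0$ limit of $\rho_i$ exists and kills $x_i$ (symmetrically, $t\to\infty$ handles $\varphi_i\neq0$, $x_i=0$), so no closed orbit meets the locus where some coordinate indexed by $I_f$ is nonzero. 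The identity $x_i\varphi_i=0$ for $i\in I_f$ is also what makes your surjectivity argument work: the linear projection onto $V_{I_d}\oplus V_{I_d}^*$ carries $\muz$ into $\mu_{I_d}^{-1}(0)$ only because the discarded terms $x_i\varphi_i\,s_i$ in the moment map vanish. (The paper gets surjectivity more cheaply from reductivity of $T$ and the closed immersion $\mu_{I_d}^{-1}(0)\hookrightarrow\muz$.)

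A secondary, smaller gap: to apply Theorem~\ref{irrcomp}(iii) to $(T,V_{I_d})$ you need $\rk S_{I_d\setminus\{i\}}=\rk S_{I_d}$ for every $i\in I_d$, i.e.\ $s_i\in\langle s_j\mid j\in I_d\setminus\{i\}\rangle$. Knowing only that $i\in I_d$, i.e.\ $s_i\in\langle s_j\mid j\neq i\rangle$, is not literally enough, since that spanning set still contains the $I_f$-weights. The paper closes this by noting that the $(s_j)_{j\in I_f}$ are linearly independent modulo $\langle s_j\mid j\in I_d\rangle$, so $\rk S_{I_d}=\rk S-\#I_f$, and a rank drop upon removing $i\in I_d$ would force $\rk S_{\hat\imath}\leqslant\rk S-1$, contradicting $i\in I_d$. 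Your sketch gestures at this but does not carry it out.
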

\begin{proof}
We denote by $\mu_{I_d}$ the moment map $V_{I_d}\oplus V_{I_d}^*\rightarrow \tf^*$. Recall that $\muz$ and $\mu_{I_d}^{-1}(0)$ are reduced by Theorem \ref{irrcomp}. Hence so are $(V\oplus V^*)\SPR T=\muz\GIT T$ and $(V_{I_d}\oplus V_{I_d}^*)\SPR T=\mu_{I_d}^{-1}(0)\GIT T$.  Note that $V_{I_d}^*$ can be identified with the subrepresentation of $V^*$ generated by the isotypical components associated to the weights $\{-s_i|i\in I_d\}$. This allows us to consider $\mu_{I_d}^{-1}(0)$ as a $T$-stable closed subscheme of $\mu^{-1}(0)$. Therefore, we have a closed immersion $\mu_{I_d}^{-1}(0)\GIT T\rightarrow \mu^{-1}(0)\GIT T$. To show that this is an isomorphism, there remains to prove that this map is a surjective. For this, we are going to show that the only closed orbits in $\muz$ lie in $\mu_{I_d}^{-1}(0)$.

Let $(x,y)\in \mu^{-1}(0)$ and assume that $x\notin V_{I_d}$. From the assumption on $x$, we know that there exists $i$ such that $x_i\neq0$ and $\rk S_{\hat\imath}<\rk S$. 
Then there exists an element $ t\in \langle s_j|\, j\neq i\rangle^{\perp}\subset \tf$ such that $s_i(t)=1$. So $t\cdot x=x_i$ and $V_{\{i\}}\subset \tf\cdot x$. Since $y\in (\tf\cdot x)^{\perp}$, we get $y_i=0$. Then a one-parameter subgroup $\Cs\stackrel{\rho}{\hookrightarrow} T$ given by $s_j(\rho(t))=t^{\delta_{i,j}}$ acts on $(x,y)$ via $t\cdot y=y$ and $\lim_{t\rightarrow 0} t\cdot x=x'$ with $x'_j=x_j$ if $j\neq i$ and $x'_i=0$. Hence $(x',y)\in \overline{T.(x,y)}\setminus T.(x,y)$. A similar argument shows that $T.(x,y)$ is also non-closed as soon as $y\notin V_{I_d}^*$.

Let us now show that $(V_{I_d}\oplus V_{I_d}^*)\SPR T$ is normal. For each $i\in I_f$, $s_i$ is not a linear combination of the other $s_j$ ($j\neq i$). Hence $\{s_i\,|\,i\in I_f\}$ induces a basis of $\langle s_i\,|\,i\in [\![1,n]\!]\rangle/\langle s_i\,|\,i\in I_d\rangle$ and $\rk S_{I_d}=\rk S-\#I_f$. Assuming that $\rk S_{I_d\setminus\{i\}}=\rk S_{I_d}-1$ for some $i\in I_d$, we would then have $\rk S_{\hat\imath}\leqslant \rk S_{I_d}-1+\#(I_f)=\rk S-1$, which contradicts the assumption $i\in I_d$. So $\rk S_{I_d\setminus\{i\}}=\rk S_{I_d}$ for any $i\in I_d$ and $(T,V_{I_d})$ satisfies hypothesis of Theorem \ref{irrcomp} (iii). Hence $\mu_{I_d}^{-1}(0)$ is irreducible and normal by Theorem \ref{irrcomp}, so the same holds for its quotient by $T$.
\end{proof}

We have the following useful characterization of a visible action:

\begin{proposition}\label{carac_visible}
The representation $(T,V)$ with weight $(s_i)_{i\in [\![1,n]\!]}$ is visible if and only if there is a partition 
$[\![1,n]\!]=\bigsqcup_{j=0}^l I_j$ satisfying
\begin{enumerate}[label=(\roman*)]
\item $\langle s_i|\, i\in [\![1,n]\!]\rangle=\bigoplus_{j=0}^l \langle s_i|\, i\in I_j\rangle$
\item $\dim \langle s_i|\, i\in I_j\rangle=\left\{\begin{array}{l l} \#I_j \textrm{ if $j=0$}\\\#I_j-1 \textrm{ if $j\geqslant 1$} \end{array}\right.$
\item $\forall j\geqslant 1$, $0\in \CHc(\{s_i|i\in I_j\})$.
\end{enumerate}
\end{proposition}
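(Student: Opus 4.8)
The plan is to translate visibility into a combinatorial condition on the weights $(s_i)$, prove the two implications separately (the second by induction on $n$), and isolate the one genuinely nontrivial step.

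\emph{Reformulation.} Since the strata $J_I$ are the $(\Cs)^n$-orbits, the $T$-orbits inside $J_I$ are the fibres of $J_I\cong(\Cs)^I\to(\Cs)^I/T$, a torus of dimension $\#I-\rk S_I$; hence $J_I$ consists of finitely many $T$-orbits iff $\rk S_I=\#I$. Combined with Proposition~\ref{nilpss}, which says $J_I$ is made of nilpotent elements iff $0\notin CH(\{s_i\,|\,i\in I\})$, this gives: $(T,V)$ is visible iff $\rk S_I=\#I$ whenever $0\notin CH(\{s_i\,|\,i\in I\})$. Taking contrapositives and restricting to minimal linearly dependent subsets — call $C\subseteq[\![1,n]\!]$ a \emph{circuit} if $(s_i)_{i\in C}$ is dependent but $(s_i)_{i\in C'}$ is free for every $C'\subsetneq C$, and note that $0\in CH(\{s_i\,|\,i\in I\})$ as soon as $I$ contains a circuit — we find that visibility is equivalent to $0\in CH(\{s_i\,|\,i\in C\})$ for every circuit $C$. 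A circuit carries a linear relation $\sum_{i\in C}\gamma_i s_i=0$, unique up to scalar and with all $\gamma_i\neq0$, so $0\in CH(\{s_i\,|\,i\in C\})$ forces all $\gamma_i$ to share a sign, i.e. $0\in\CHc(\{s_i\,|\,i\in C\})$. Thus: \emph{$(T,V)$ is visible iff every circuit $C$ satisfies $0\in\CHc(\{s_i\,|\,i\in C\})$} (we then call $C$ \emph{positive}).

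\emph{Sufficiency.} Given a partition as in the statement, project a nonnegative relation onto the summands of (i) to see, for $v\in V$ with support $K=\{i\,|\,v_i\neq0\}$, that $0\in CH(\{s_i\,|\,i\in K\})$ iff $0\in CH(\{s_i\,|\,i\in K\cap I_j\})$ for some $j$. By (ii) the block $I_0$ carries no nontrivial relation, and by (ii)--(iii) each $I_j$ $(j\geq1)$ is a positive circuit; so by Proposition~\ref{nilpss}, $v$ is nilpotent iff $K\cap I_j\subsetneq I_j$ for all $j\geq1$. Only finitely many $K$ qualify, and for each of them $(s_i)_{i\in K}$ is free (over the blocks it is a union of subsets of a free family, resp. proper subsets of circuits, glued freely by (i)), so $\rk S_K=\#K$ and $J_K$ is a single $T$-orbit; hence there are finitely many nilpotent orbits.

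\emph{Necessity (induction on $n$).} If $i_0\in I_f$, in particular if $s_{i_0}=0$, then $(T,V_{[\![1,n]\!]\setminus\{i_0\}})$ is still visible (its nilpotent cone is the trace on this subrepresentation of that of $V$); by induction $[\![1,n]\!]\setminus\{i_0\}$ admits a partition, to which one adds $i_0$ — as a new block if $s_{i_0}=0$, to $I_0$ otherwise — with (i) and (ii) preserved precisely because $s_{i_0}\notin\langle s_i\,|\,i\neq i_0\rangle$. So assume $I_f=\emptyset$ and all $s_i\neq0$; then every $s_i$ lies in a circuit. Let $[\![1,n]\!]=\bigsqcup_k E_k$ be the finest partition with $\langle s_i\,|\,i\in[\![1,n]\!]\rangle=\bigoplus_k\langle s_i\,|\,i\in E_k\rangle$ (the components of the linear matroid of $(s_i)$). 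Taking $I_0=\emptyset$ and letting the $I_j$ be the $E_k$, (i) holds; each $E_k$ has $\#E_k\geq2$ and is a circuit, hence positive by the reformulation, which gives (iii) and $\dim\langle s_i\,|\,i\in E_k\rangle=\#E_k-1$, hence (ii). Everything thus reduces to the following, which I expect to be the main obstacle.

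\textbf{Main point.} If every circuit of $(s_i)_{i\in E}$ is positive, then every component $E'$ with $\#E'\geq2$ is a single circuit, i.e. $\rk S_{E'}=\#E'-1$.

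I would argue by contradiction, taking $E$ minimal among connected configurations with all circuits positive and corank $\#E-\rk S_E\geq2$. Deleting an element preserves ``all circuits positive'' and, since there are no coloops, lowers the corank by exactly $1$; by a standard fact on matroid connectivity such a configuration — which is neither a single circuit nor of the degenerate type with all $s_i$ pairwise proportional (the latter having an evidently non-positive circuit once $\#E\geq3$) — has an element whose deletion keeps it connected, and minimality then forces the corank to be exactly $2$. In that case the space $R\subseteq\K^E$ of linear relations among the $s_i$ is a plane, so after fixing a basis $u,v$ of $R$ every nonzero $\rho\in R$ vanishes exactly on one class $Q_t$ of the partition of $E$ given by $i\mapsto[u_i:v_i]\in\PP^1$ (or vanishes nowhere), and the circuits of $(s_i)_{i\in E}$ are precisely the sets $E\setminus Q_t$; connectedness forces at least three classes, so any two circuits meet, and eliminating a common element $e\in Q_{t_0}$ between the positive relations of the circuits $E\setminus Q_1$ and $E\setminus Q_2$ produces a $\tau\in R$ vanishing exactly on $Q_{t_0}$ — hence proportional to the positive relation of $E\setminus Q_{t_0}$ — yet strictly positive on $Q_2$ and strictly negative on $Q_1$, a contradiction. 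Carrying out the reduction to corank $2$ rigorously is the delicate part; the corank-$2$ computation itself is short and self-contained.
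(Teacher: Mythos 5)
Your reformulation of visibility --- $(T,V)$ is visible if and only if every circuit of the weight configuration is positive --- is correct, and both your sufficiency argument and the preliminary reductions in the necessity direction (removing elements of $I_f$ and zero weights, passing to the connected components of the linear matroid of $(s_i)$) are sound. This is a genuinely more combinatorial route than the paper's: the paper proves necessity by induction on $n$, peeling off one weight $s_n$ at a time and adjusting the relation expressing $s_n$ so that a mixed-sign relation on an independent set $\tilde I\cup\{n\}$ would produce a positive-dimensional family of nilpotent orbits in $J_{\tilde I\cup\{n\}}$, contradicting visibility. Your ``Main point'' is true (via your reformulation it is equivalent to the necessity direction, so it does follow from the proposition), and your corank-$2$ computation is complete and correct.

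The gap is the reduction to corank $2$, and it is a genuine one. The ``standard fact on matroid connectivity'' you invoke --- that a connected matroid which is neither a single circuit nor a rank-one (all-parallel) configuration has an element whose deletion keeps it connected --- is false. Take the cycle matroid of the theta graph $\Theta_{2,2,2}$ (two vertices joined by three internally disjoint paths of length $2$): it is realizable over $\K$ by the columns of a signed incidence matrix, it is connected, has six elements, rank $4$, corank $2$ and three circuits, so it is neither a circuit nor rank one; yet deleting any element leaves a coloop, so every single-element deletion disconnects it. (This example does not contradict your Main point --- its three $4$-circuits cannot all be made positive --- but it does invalidate the lemma your reduction rests on.) The true statement in this vein is Tutte's: for each $e$, $M\setminus e$ \emph{or} $M/e$ is connected; but the contraction branch requires you to check that ``all circuits positive'' passes to $M/e$ and to rework the induction, since contraction preserves the corank. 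Note also that the obvious repair --- allowing disconnecting deletions and applying minimality to the components of $E\setminus\{e\}$ --- does not immediately work either: the corank of $E\setminus\{e\}$, which is $(\#E-\rk S_E)-1\geqslant 2$, may be spread over several components each of corank $1$, so minimality does not force $\#E-\rk S_E=2$. As written, the necessity direction is therefore only established when the relevant component has corank $2$; the general case needs a new argument (or the paper's one-weight-at-a-time induction, which bypasses the matroid connectivity issue entirely).
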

Note that the subset $I_0$ in the proposition has to coincide with $I_f$ defined in \eqref{def_If} and that $I_d=\bigsqcup_{j\geqslant 1} I_j$.
\begin{proof}
{\bf ``if'' part:} Assume that such a partition exists. Let $x\in V$ be nilpotent and let $\tilde I$ be such that $x\in J_{\tilde I}$. Define $\tilde{I}_j:=\tilde I\cap I_j$. Assume that $j$ is a positive index such that $\tilde{I}_j\neq\emptyset$. Since $x$ is nilpotent and $\tilde{I}_j\subset \tilde I$, it follows from Proposition \ref{nilpss} that $0\notin CH(\{s_i|i\in \tilde{I}_j\})$. By (iii), $\tilde{I}_j$ is a proper subset of $I_j$.

On the other hand, (iii) also implies that there is a convex combination $\sum_{i\in I_j} \alpha_is_i=0$ with each $\alpha_i>0$. By (ii), this combination is the unique linear relation (up to scalar multiplication) between the $(s_i)_{i\in I_j}$. In particular, the family $(s_i)_{i\in \tilde I_j}$ is linearly independant. The same hold if $j=0$ by (ii). By (i), we get that $(s_i)_{i\in \tilde I}$ is linearly independent. 

Then, by \eqref{tspaceI}, $\dim T.x=\rk S_{\tilde I}=\#{\tilde I}=\dim J_{\tilde I}$. Hence $T.x$ is open in $J_{\tilde I}$. Since this holds for any $x\in J_{\tilde I}$, we get $T.x=J_{\tilde I}$. Since there are finitely many possible index set $\tilde I$, the representation $(T,V)$ is visible.

{\bf ``only if'' part:}  
We will argue by induction on  $\dim V$.
Assume $\dim V=1$. If $s_1\neq0$, we set $l=0$ and $I_0:=\{s_1\}$. If $s_1=0$, we set $l:=1$, $I_0:=\emptyset$ and $I_1:=\{s_1\}$.

Assume now that the result holds true for any visible representation of $T$ of dimension $n-1$ and consider a visible representation $(T,V)$ of dimension $n$. Since $V':=V_{[\![1,n-1]\!]}$ is a subrepresentation of $V$, it is also visible. By the inductive hypothesis, there exists a partition $[\![1,n-1]\!]=\bigsqcup_{j=0}^{l'} I'_j$ satisfying (i), (ii) and (iii). If $s_{n}\notin \langle s_i|\, i\in [\![1,n-1]\!]\rangle$, we can set $I_0:=I_0'\cup \{n\}$ and $I_j:=I'_j$ for any $j\geqslant 1$.
 
From now on, we assume that $s_{n}\in \langle s_i|\, i\in [\![1,n-1]\!]\rangle$. As a first step, we claim that there is a subset $\tilde I\subset [\![1,n-1]\!]$ and some non-zero rational coefficients $(\beta_i)_{i\in \tilde I}$ such that 
\begin{itemize}
\item $s_{n}:=\sum_{i\in \tilde I} \beta_i s_i$ 
\item $\tilde I\cap I_j'\neq I_j'$ for any $j\geqslant 1$ 
\item $\beta_i>0$ for $i\in \bigsqcup_{j\geqslant 1} I_j'\cap \tilde I$.
\end{itemize}
Indeed, start with a combination with rational coefficients $s_{n}:=\sum_{i\in [\![1,n-1]\!]} \beta'_i s_i$. Fix $j\geqslant 1$. From (iii) and (ii), there is a combination with positive coefficients $0=\sum_{i\in I_j'} \alpha_i s_i$. Let $\lambda_j:=\max_{i\in I_j'}\{-\beta_i'/\alpha_i\}$. Set $\beta_i:=\beta_i'+\lambda_j \alpha_i$ for $i\in I_j'$. So $\beta_i\geqslant 0$ for any $i\in I_j'$ and $\beta_i=0$ for at least one $i\in I_j'$. Then set $\tilde I:=\{i| \beta_i\neq0\}$ and the claim is shown.

From the property $\tilde I\cap I_j'\neq I_j'$ and (i), (ii), (iii),, we get that $(s_i)_{i\in \tilde I}$ is linearly independant as in the proof of the ``if part''. So, up to scalar multiplication, the only relation between $\{s_i|i\in \tilde I\cup \{n\}\}$ is given by the above coefficients $\beta_i$ ($i\in \tilde I$). Assuming that $\beta_i>0$ for some $i\in \tilde I$ (\emph{e.g.} if $\tilde I\cap \bigsqcup_{j\geqslant 1} I_j'\neq\emptyset$), we get $0\notin CH(\{s_i|i\in \tilde I\cup \{n\}\})$ so orbits in $J_{\tilde I\cup \{n\}}$ are nilpotent by Proposition \ref{nilpss}. From \eqref{tspaceI}, these orbits are of dimension $\rk S_{\tilde I\cup \{n\}}=\#\tilde I=\dim J_{\tilde I\cup \{n\}}-1$. This contradicts the visibility of $(T,V)$. As a result, $\tilde I\subset I_0$ and $\beta_i<0$ for each $i\in I_0$. Setting $l:=l'+1$, $I_{l}:=\tilde I\cup \{n\}$, $I_0:=I_0'\setminus \tilde I$ and $I_j:=I_j'$ for $j\in [\![1,l']\!]$ then yield (i), (ii) and (iii) for the representation $(T,V)$.
 \end{proof}


\begin{corollary} \label{vis_so_pol}
Any visible representation of a torus is polar.
\end{corollary}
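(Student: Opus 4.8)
The plan is to produce an explicit Cartan subspace for a visible representation $(T,V)$ using the combinatorial decomposition $[\![1,n]\!]=\bigsqcup_{j=0}^l I_j$ provided by Proposition~\ref{carac_visible}. Recall that polarity requires a $v\in V$ with $\dim\cc_v=\dim V/\!\!/T$, where $\cc_v=\{x\in V\mid \tf\cdot x\subseteq \tf\cdot v\}$. First I would compute $\dim V/\!\!/T$: since the generic orbit dimension is $\rk S$ and the generic orbit is closed exactly when $(T,V)$ is stable, one must be a little careful — in general $\dim V/\!\!/T$ equals $\dim V$ minus the maximal dimension of a \emph{closed} orbit. Using Proposition~\ref{nilpss}(2), a generic point of $V_{I_d}$ (i.e. of $J_{I_d}$) is semisimple because $0\in\CHc(\{s_i\mid i\in I_d\})$ — this follows by adding up the relations $\sum_{i\in I_j}\alpha_i s_i=0$ from (iii) over $j\ge 1$ — so its orbit, of dimension $\rk S_{I_d}=\rk S-\#I_0$, is closed; conversely no larger closed orbit exists. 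Hence $\dim V/\!\!/T=\dim V-\rk S+\#I_0=\#I_0+\sum_{j\ge 1}(\#I_j-\rk S_{I_j})=\#I_0+l$ by Proposition~\ref{carac_visible}(ii).

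Next I would exhibit $v$. Take $v:=v_{[\![1,n]\!]}=(1,\dots,1)$, so $\tf\cdot v=\Im(S)$ by \eqref{tspace}, and compute $\cc_v=\{x\in V\mid X_x\Im(S)\subseteq\Im(S)\}$ — actually it is cleaner to pick $v$ generic in $J_{I_d}\oplus(\text{something})$; more precisely I expect the correct Cartan subspace to be
\[
\cc \;=\; V_{I_0}\,\oplus\,\bigoplus_{j=1}^{l}\K\!\cdot\! w_j,
\]
where $w_j\in V_{I_j}$ is the (essentially unique, up to scaling coordinates) vector whose coordinates realize the positive convex relation $\sum_{i\in I_j}\alpha_i s_i=0$, say $w_j=(\alpha_i)_{i\in I_j}$ extended by zero. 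I would verify that for $v:=\sum_j w_j + v_{I_0}$ one has $\tf\cdot v=\Im(S)$ (since all coordinates are nonzero, $v\in J_{[\![1,n]\!]}$, so this is automatic from \eqref{tspace} and \eqref{modTV}), and then check the two inclusions defining $\cc=\cc_v$: that every $x\in\cc$ satisfies $\tf\cdot x\subseteq\Im(S)=\tf\cdot v$, and that $\dim\cc=\#I_0+l=\dim V/\!\!/T$, which forces $\cc\subseteq\cc_v$ and equality of dimensions, giving polarity. The inclusion $\tf\cdot x\subseteq\Im(S)$ for $x\in V_{I_0}$ is clear; for $x\in\K w_j$ one uses that $X_{w_j}S$ kills exactly the relation among $\{s_i\}_{i\in I_j}$, so $\Im(X_{w_j}S)$ is spanned inside $V_{I_j}$ by the $s_i$-combinations, which lie in $\Im(S)$.

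The main obstacle I anticipate is the dimension count for $\cc_v$ itself — showing not merely $\cc\subseteq\cc_v$ but that this exhausts $\cc_v$, i.e. that no point outside $\cc$ centralizes the tangent direction of $v$ into $\tf\cdot v$; equivalently, that $\dim\cc_v\le\#I_0+l$. For this I would argue block by block: if $x\in\cc_v$ then for each $j$ the $I_j$-block $x_{I_j}$ must satisfy $X_{x_{I_j}}\Im(S_{I_j})\subseteq\Im(S)\cap\langle V_i:i\in I_j\rangle=\Im(S_{I_j})$ (using the direct sum (i)), reducing to the $j$-th block; within block $j\ge 1$ one has a single linear relation, so the condition $X_{x_{I_j}}\Im(S_{I_j})\subseteq\Im(S_{I_j})$ pins $x_{I_j}$ down to a line (the relation vector $w_j$), while for $j=0$ the $s_i$, $i\in I_0$, are independent so $\Im(S_{I_0})$ is everything and there is no constraint, giving $\#I_0$ free parameters. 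Summing, $\dim\cc_v=\#I_0+l$, as required. Since $\dim\cc_v=\dim V/\!\!/T$, the representation is polar by definition, and $\cc$ is a Cartan subspace.
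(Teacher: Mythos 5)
There is a genuine error in your computation of $\dim V\GIT T$, and it propagates to your candidate Cartan subspace. The formula ``$\dim V\GIT T$ equals $\dim V$ minus the maximal dimension of a \emph{closed} orbit'' is false: for $T=\Ks$ acting on $V=\K$ with weight $1$ (which is visible, with $I_0=\{1\}$, $l=0$), the only closed orbit is $\{0\}$ and the invariant ring is $\K$, so $\dim V\GIT T=0$, not $1$. What is true is that the generic \emph{fiber} of $\pi:V\to V\GIT T$ has dimension $\dim V-\dim V\GIT T$, and a fiber is in general strictly larger than the closed orbit it contains. The paper uses visibility precisely here: a general fiber is a finite union of orbits, hence is the closure of a general orbit, so $\dim V\GIT T=\dim V-\rk S=l$ by Proposition~\ref{carac_visible}(ii) --- not $\#I_0+l$. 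Consequently your space $\cc=V_{I_0}\oplus\bigoplus_{j\geqslant 1}\K w_j$ has dimension exceeding $\dim V\GIT T$ by $\#I_0$ and cannot be a Cartan subspace whenever $I_0\neq\emptyset$ (equivalently, whenever $(T,V)$ is not stable); note also that the nonzero elements of $V_{I_0}$ are nilpotent and non-semisimple, so they have no business lying in a Cartan subspace. In the special case $I_0=\emptyset$ your construction essentially agrees with the paper's.

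The correct candidate, which is what the paper takes, is supported on $I_d=\bigsqcup_{j\geqslant 1}I_j$ only: $\cc=\{x(\underline w)\}$ where $x(\underline w)_i=0$ for $i\in I_0$ and $x(\underline w)_i=w_j$ for $i\in I_j$, $j\geqslant 1$. Taking the coordinates \emph{constant} on each block (rather than your relation vector $(\alpha_i)_{i\in I_j}$) makes $X_{x(\underline w)}$ act as $w_j\,\mathrm{Id}$ on $\Im(S_{I_j})$, so $\tf\cdot x(\underline w)\subseteq\tf\cdot x(1,\dots,1)$ is immediate from $\Im(S)=\bigoplus_j\Im(S_{I_j})$; with your $w_j$ the inclusion $X_{w_j}\Im(S_{I_j})\subseteq\Im(S_{I_j})$ generally fails, and your assertion that $\tf\cdot v=\Im(S)$ for a point $v$ with all coordinates nonzero is also unjustified (one only gets $X_v\Im(S)$). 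These latter points are repairable; the miscount of $\dim V\GIT T$ is the essential gap.
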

\begin{proof}
Let $(T,V)$ be a visible representation and adopt notation of Proposition \ref{carac_visible}. For $\underline{w}:=(w_1, \dots,w_l)\in \C^l$, we consider $x(\underline{w})$ as the element $(x_i)_{i\in [\![1,n]\!]}$ given by $x_i:=0$ if $i\in I_0$ and $x_i:=w_j$ if $i\in I_j$ ($j\geqslant 1$). From \eqref{tspace}, we get $\tf\cdot x(\underline{w})=X_{x(\underline{w})}Im(S)$. From Proposition \ref{carac_visible} (i), $Im(S)=\bigoplus_j Im(S_{I_j})$. Since $X_{x(\underline w)}$ acts by $w_jId$ on $Im(S_{I_j})$, we get that $\tf\cdot x(\underline{w})$ is included in $\tf\cdot x(1,\dots,1)$. The subspace $\cc:=\{x(\underline w)|\underline w\in \C^l\}$ is our candidate for being a Cartan subspace of $(T,V)$ and there remains to show that $l=\dim V\GIT T$, \cite{DK85}.

Since $(T,V)$ is visible, a general fiber of the quotient map $V\rightarrow V\GIT T$ is the closure of a general orbit in $V$. Hence, by \eqref{tspaceI}, $\dim V\GIT T=\dim V-\rk S$. This is equal to $l$ by Proposition \ref{carac_visible}.
\end{proof}

\begin{corollary}\label{st_irr}
Assume that $(T,V)$ is visible. Then the following assertions are equivalent
\begin{itemize} 
\item $(T,V)$ is stable, 
\item $I_f=\emptyset$, 
\item $\mu^{-1}(0)$ is irreducible.
\end{itemize}
\end{corollary}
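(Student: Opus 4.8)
The plan is to split the three-way equivalence into $(2)\Leftrightarrow(3)$ and $(1)\Leftrightarrow(2)$. The first of these requires no work beyond unwinding definitions: by \eqref{def_If}, the condition $I_f=\emptyset$ says exactly that $\rk S_{\hat\imath}=\rk S$ for every $i\in[\![1,n]\!]$, which is precisely the criterion for irreducibility of $\muz$ given in Theorem~\ref{irrcomp}(iii). So $(2)\Leftrightarrow(3)$ is immediate (and, incidentally, does not use visibility).

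For $(1)\Leftrightarrow(2)$ I would use the partition $[\![1,n]\!]=\bigsqcup_{j=0}^{l}I_j$ provided by Proposition~\ref{carac_visible} (available since $(T,V)$ is visible), the observation recorded just after that proposition that $I_0=I_f$, and the translation of stability from Corollary~\ref{carac_stable}: $(T,V)$ is stable if and only if $0\in\CHc(\{s_i\mid i\in[\![1,n]\!]\})$, i.e. if and only if there is a relation $\sum_{i}\alpha_i s_i=0$ with all $\alpha_i>0$.

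For $(1)\Rightarrow(2)$: take such a relation $\sum_{i}\alpha_i s_i=0$ with $\alpha_i>0$. Grouping the terms along the direct sum decomposition of Proposition~\ref{carac_visible}(i), each partial sum $\sum_{i\in I_j}\alpha_i s_i$ lies in the summand $\langle s_i\mid i\in I_j\rangle$, so each of them must vanish separately. For $j=0$, Proposition~\ref{carac_visible}(ii) says the $(s_i)_{i\in I_0}$ are linearly independent, so $\sum_{i\in I_0}\alpha_i s_i=0$ with all $\alpha_i>0$ forces $I_0=\emptyset$, that is $I_f=\emptyset$. For $(2)\Rightarrow(1)$: if $I_f=I_0=\emptyset$ then $[\![1,n]\!]=\bigsqcup_{j\ge 1}I_j$, and for each $j\ge 1$ condition (iii) of Proposition~\ref{carac_visible} provides a convex combination $\sum_{i\in I_j}\alpha_i^{(j)}s_i=0$ with all $\alpha_i^{(j)}>0$; after rescaling each so that $\sum_{i\in I_j}\alpha_i^{(j)}=1/l$ and summing over $j$, one gets $\sum_{i\in[\![1,n]\!]}\alpha_i s_i=0$ with all $\alpha_i>0$ and $\sum_i\alpha_i=1$, hence $0\in\CHc(\{s_i\mid i\in[\![1,n]\!]\})$ and $(T,V)$ is stable by Corollary~\ref{carac_stable}. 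The degenerate case $l=0$ forces $[\![1,n]\!]=I_0=\emptyset$, i.e. $V=0$, where all three statements hold trivially.

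No step here is a genuine obstacle; the only point needing a little care is that Proposition~\ref{carac_visible}(i) is a true \emph{direct} sum, which is what allows the stability relation to be split summand by summand so that the $I_0$-component is forced to vanish on its own — that is the one place where the fine structure of the visibility partition (rather than just the counting statement $\dim V\GIT T=l$) is used.
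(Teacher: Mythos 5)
Your proof is correct and follows essentially the same route as the paper: reduce irreducibility to $I_f=\emptyset$ via Theorem~\ref{irrcomp}(iii), and identify stability with $I_0=\emptyset$ via Corollary~\ref{carac_stable} together with the partition and the observation $I_0=I_f$ from Proposition~\ref{carac_visible}. The only difference is that you spell out the implication ``stable $\Leftrightarrow I_0=\emptyset$'' (splitting the convex relation along the direct sum in Proposition~\ref{carac_visible}(i)), which the paper leaves implicit; this is a welcome amount of extra detail, not a divergence in method.
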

\begin{proof}
Assume that $(T,V)$ is visible and and adopt notations of Proposition \ref{carac_visible}. Then, by Corollary \ref{carac_stable}, the action is stable if and only if $I_0=\emptyset$. Since $I_0=I_f$, the result follows from Theorem \ref{irrcomp} (iii).
%
\end{proof}

\begin{remark}
Thanks to the previous corollaries, we can re-interpret Proposition \ref{carac_visible} as follows. The visible tori actions are essentially made of two part: a part of rank $0$ provided by $I_f$ and several stable blocks of rank $1$, each looking very much like \cite[Example 8.6]{BLLT}. Another appearence of stable blocks of rank $1$ can be found in Lemma \ref{111}. 
\end{remark}

Let us now ake a closer look at $(V\oplus V^*)\SPR T$, that is at closed orbits of $\mu^{-1}(0)$. A first observation is the following. 
\begin{lemma} \label{lm:ss_s_s}
If $(x,y)\in \mu^{-1}(0)$ is such that $x\in V$ and $y\in V^*$ are semisimple, then $(x,y)$ is semisimple in $V\oplus V^*$.
\end{lemma}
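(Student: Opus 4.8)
The plan is to reduce the statement, via the convex-geometric description of semisimple elements established in Proposition~\ref{nilpss}, to a one-line fact about convex hulls, so the hard part is essentially bookkeeping.

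First I would fix notation: let $I:=\{i\in[\![1,n]\!]\mid x_i\neq 0\}$, so that $x\in J_I$, and let $K:=\{i\in[\![1,n]\!]\mid y_i\neq 0\}$. If $x=0$ or $y=0$ the claim is immediate, since then $T.(x,y)$ equals $\{0\}\times T.y$ or $T.x\times\{0\}$, so we may assume $I,K\neq\emptyset$. I would regard $V^*$ as a representation of $T$ with weights $(-s_i)_i$, and $V\oplus V^*$ as a representation of $T$ with weight list $(s_1,\dots,s_n,-s_1,\dots,-s_n)$; then the weights of $V\oplus V^*$ actually occurring in $(x,y)$ are exactly those in $A\cup B$, where $A:=\{s_i\mid i\in I\}$ and $B:=\{-s_i\mid i\in K\}$. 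By Proposition~\ref{nilpss}(2) (and its evident analogues for $V^*$ and for $V\oplus V^*$), the hypotheses translate into $0\in\CHc(A)$ and $0\in\CHc(B)$, and it suffices to prove $0\in\CHc(A\cup B)$.

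So the only remaining point is the elementary claim: if $0\in\CHc(A)$ and $0\in\CHc(B)$ for nonempty finite subsets of $W_\QQ$, then $0\in\CHc(A\cup B)$. Using the characterisation of $\CHc$ recalled just before Lemma~\ref{caracCH}, write $0=\sum_{a\in A}\alpha_a a$ and $0=\sum_{b\in B}\beta_b b$ as convex combinations with all $\alpha_a,\beta_b>0$; then $0=\tfrac12\sum_{a\in A}\alpha_a a+\tfrac12\sum_{b\in B}\beta_b b$ is again a convex combination of $A\cup B$ in which every element occurs with a strictly positive coefficient (an element lying in $A\cap B$ simply accumulates its two contributions), hence $0\in\CHc(A\cup B)$, as wanted.

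I do not anticipate a genuine obstacle here: the argument is short, and the only care needed is the bookkeeping when a weight occurs both as some $s_i$ with $i\in I$ and as some $-s_j$ with $j\in K$, which is harmless as just noted. It is perhaps worth remarking that the hypothesis $(x,y)\in\mu^{-1}(0)$ plays no role in this proof --- the conclusion holds for every pair of semisimple elements --- although of course it is only its restriction to the null-fibre that is used in the sequel.
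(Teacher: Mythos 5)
Your proof is correct and follows essentially the same route as the paper: translate both semisimplicity hypotheses into $0\in\CHc(\cdot)$ via Proposition~\ref{nilpss} and conclude that $0\in\CHc(A\cup B)$. The paper states the convex-hull step without proof; you supply the (correct) averaging argument and the harmless degenerate cases, but the underlying idea is identical.
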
 
\begin{proof}
Consider the set of weigths for the action of $T$ on $V\oplus V^*$. Denote by $A_x$ (resp. $A_y$, $A_{x,y}$) the set of weights corresponding to the support of $x$ (resp. $y$, $(x,y)$). By Proposition \ref{nilpss}, $0\in \CHc(A_x)$ and $0\in \CHc(A_y)$. So $0\in \CHc(A_x\cup A_y)=\CHc(A_{x,y})$. The result follows.
\end{proof}

\begin{proposition}\label{bijection}
The representation $(T,V)$ is visible if and only if closed orbits of $\mu^{-1}(0)$ are those of the form $T.(x,y)$ with $T.x$ and $T.y$ closed. 
\end{proposition}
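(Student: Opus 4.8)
The plan is to prove both implications by playing off the structure result in Proposition~\ref{carac_visible} against the already-established description of closed orbits in Proposition~\ref{normal_quot}.

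For the ``only if'' direction, assume $(T,V)$ is visible and let $(x,y)\in\muz$ have a closed $T$-orbit. By Proposition~\ref{normal_quot} (more precisely its proof), the only closed orbits in $\muz$ lie in $\mu_{I_d}^{-1}(0)\subset V_{I_d}\oplus V_{I_d}^*$, so $x\in V_{I_d}$ and $y\in V_{I_d}^*$. Adopting the partition $[\![1,n]\!]=\bigsqcup_{j=0}^l I_j$ of Proposition~\ref{carac_visible}, recall $I_d=\bigsqcup_{j\geqslant 1}I_j$, and each block $I_j$ ($j\geqslant1$) satisfies $0\in\CHc(\{s_i\,|\,i\in I_j\})$ with $\langle s_i\,|\,i\in I_j\rangle$ in direct sum position by (i). Writing $I=\{i\,|\,x_i\neq0\}$, closedness of $T.(x,y)$ forces, block by block, that $I\cap I_j$ is either empty or all of $I_j$: if $\emptyset\neq I\cap I_j\subsetneq I_j$ then $0\notin CH(\{s_i\,|\,i\in I\cap I_j\})$, so one produces a one-parameter subgroup supported on $I_j$ degenerating $x$ (and fixing $y$, since the corresponding weights of $y$ vanish as in the argument of Proposition~\ref{normal_quot}), contradicting closedness; hence $x$ ``fills'' each block it meets, and then $0\in\CHc(\{s_i\,|\,i\in I\})$ as a union of blocks each containing $0$ in its relative interior, so $x$ is semisimple by Proposition~\ref{nilpss}. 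Symmetrically $y$ is semisimple. Conversely, if $x$ and $y$ are both semisimple then $(x,y)$ is semisimple by Lemma~\ref{lm:ss_s_s}, i.e. $T.(x,y)$ is closed. This gives the stated characterization.

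For the ``if'' direction, I would argue by contraposition: suppose $(T,V)$ is not visible. By Proposition~\ref{carac_visible}, no partition as described exists; I want to exhibit $(x,y)\in\muz$ with $T.(x,y)$ closed but $T.x$ not closed. The natural source of a bad orbit is a stratum $J_I$ carrying infinitely many nilpotent orbits; concretely, non-visibility produces (running the failed induction of Proposition~\ref{carac_visible}, or directly) a subset $I$ with $0\in CH(\{s_i\,|\,i\in I\})$ but $0\notin\CHc(\{s_i\,|\,i\in I\})$, so that a generic $x\in J_I$ is nilpotent with $\dim T.x=\rk S_I=\#I-\dim\langle s_i\,|\,i\in I\rangle<\#I$, whence $J_I$ contains a positive-dimensional family of nilpotent orbits. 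Choosing $\phi\in X^*(T)$ with $\phi(s_i)\geqslant0$ on $I$ and $\phi(s_i)=0$ exactly on a proper subset $I'\subsetneq I$, take $x\in J_I$ generic and then build $y\in V^*$ supported on the ``opposite'' coordinates so that $(x,y)\in\muz$ and the one-parameter subgroups that could degenerate $x$ are exactly neutralized on the $(x,y)$ level by the weights carried by $y$; this is the same balancing trick used in Lemma~\ref{lm:ss_s_s} and in Proposition~\ref{normal_quot}, now run to make $(x,y)$ semisimple while $x$ alone is not. Concretely one arranges the support of $y$ to contain, for each defect direction, a weight pairing negatively where $x$'s weights pair non-negatively, so that $0\in\CHc(A_{x,y})$ even though $0\notin\CHc(A_x)$.

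The main obstacle I expect is the ``if'' direction: turning the combinatorial failure of the partition in Proposition~\ref{carac_visible} into an \emph{explicit} pair $(x,y)$ that is genuinely semisimple in $V\oplus V^*$ while $x$ is not semisimple in $V$. The subtlety is that adding coordinates to $y$ both helps (it can move $0$ into the relative interior of the combined weight set) and constrains (the moment-map condition $y\in(\tf\cdot x)^\perp$ kills certain coordinates of $y$ once $\tf\cdot x$ is large, exactly as in Proposition~\ref{normal_quot}); one must check these two effects are compatible, i.e. that the weights one is forced to switch off in $y$ are not the ones needed to achieve semisimplicity. I would handle this by localizing to a single minimal ``bad'' block — a set $I$ with $0\in\partial\,CH(\{s_i\,|\,i\in I\})$ minimal for this property — where the linear relation among the $s_i$ is essentially unique, making the bookkeeping of which $y$-coordinates survive completely explicit, and then noting the general case reduces to this block by the direct-sum decomposition of $\langle s_i\rangle$ coming from the (partial) partition that non-visibility still allows.
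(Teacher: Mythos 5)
Your overall strategy is the paper's: both directions are played off Proposition~\ref{carac_visible}, the degeneration trick from the proof of Proposition~\ref{normal_quot}, and Lemma~\ref{lm:ss_s_s}. However, there are two genuine gaps.

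In the ``only if'' direction, the parenthetical ``(and fixing $y$, since the corresponding weights of $y$ vanish)'' is not correct as stated. Because of the relation $\sum_{i\in I_j}\alpha_i s_i=0$ with all $\alpha_i>0$, any one-parameter subgroup $\rho$ with $s_i(\rho(t))=t^{b_i}$ and $b_i>0$ on $I\cap I_j$ must have $b_i<0$ for some $i\in I_j\setminus I$. The moment-map condition forces $y_i=0$ only for $i\in I=\{i\mid x_i\neq 0\}$ (there indeed $\tf\cdot x=V_I$ since the $(s_i)_{i\in I}$ are independent), \emph{not} for $i\in I_j\setminus I$, and $\rho(t)$ acts on such a coordinate $y_i$ by $t^{-b_i}$. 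So $y$ is genuinely moved, and you must arrange that its limit exists: either choose the signs so that $b_i<0$ exactly on $I_j\setminus I$ (those $y_i$ then tend to $0$), or, as the paper does, support $\rho$ on just two indices $i_0\in I\cap I_j$, $i_1\in I_j\setminus I$ with exponents $\alpha_{i_1}$ and $-\alpha_{i_0}$. This is repairable but the step as written fails.

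The ``if'' direction is where the real gap lies, and you flag it yourself. First, your extraction from non-visibility is wrong: a stratum $J_I$ carrying infinitely many nilpotent orbits satisfies $0\notin CH(\{s_i\mid i\in I\})$ (all its elements are nilpotent, Proposition~\ref{nilpss}) together with $\rk S_I<\#I$; your condition ``$0\in CH$ but $0\notin\CHc$'' describes elements that are neither nilpotent nor semisimple and contradicts your next assertion that a generic $x\in J_I$ is nilpotent. Second, the construction is only a plan. The missing idea is short: since $\rk S_I<\#I$ there is a nontrivial integral relation $\sum_{i\in I}\alpha_is_i=0$, and since $0\notin CH$ its coefficients cannot all have the same sign, so it rewrites as $\sum_{i\in I'}\beta_is_i=\sum_{i\in I''}\beta_is_i$ with all $\beta_i>0$ and $I',I''$ disjoint and nonempty. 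Take $x\in J_{I'}$ and $y\in J_{I''}\subset V_{I''}^*$. The moment-map condition is automatic because $\tf\cdot x\subset V_{I'}$ and $I'\cap I''=\emptyset$; $x$ is nonzero nilpotent; and $(x,y)$ is semisimple, either because $0\in\CHc(\{s_i\}_{i\in I'}\cup\{-s_i\}_{i\in I''})$ by the displayed relation, or, as in the paper, because $T.(x,y)$ lies in the closed $T$-invariant level set of the invariant monomial $\prod_{i\in I'}(x_i)^{\beta_i}\prod_{i\in I''}(y_i)^{\beta_i}$, which avoids $\{0\}\times V^*$, so the closed orbit in $\overline{T.(x,y)}$ still has nonzero nilpotent first component. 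Your ``balancing'' heuristic points at exactly this, but without identifying the mixed-sign relation it does not yet constitute a proof.
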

\begin{proof}
$\bullet$ Assume that $(T,V)$ is visible. With similar arguments to those of Proposition \ref{normal_quot}, we are going to show that, whenever $x\in V$ is not semisimple, $(x,y)\in \muz$ cannot be semisimple. A symmetric argument on $y\in V^*$ and Lemma \ref{lm:ss_s_s} then allow to conclude for the ``only if'' part. 

Let $x\in J_{\tilde I}$ for some $\tilde{I}\subset [\![1,n]\!]$. The representation $(T,V_{\tilde{I}})$ is also visible, so we can apply Proposition \ref{carac_visible} (with $\tilde{I}$ instead of $[\![1,n]\!]$) and get a partition $\tilde I=\bigsqcup_{j=0}^l \tilde I_j$. Apply also Proposition \ref{carac_visible} to $V$, so that we get a partition $[\![1,n]\!]=\bigsqcup_j I_{j}$. 

Assuming that $x$ is not semisimple amounts to assume that $\tilde I_0\neq\emptyset$ (Proposition \ref{nilpss}). If $\tilde I_0\cap I_0\neq\emptyset$, Proposition \ref{normal_quot} states that any element of $\muz$ of the form $(x,y)$ is not semisimple. From now on, we assume that there exists $j\geqslant 1$ such that $\tilde I_0\cap I_j\neq\emptyset$. From property (ii) of Proposition \ref{carac_visible} on $\tilde I_0$ and $I_j$, we also know that $I_j \setminus \tilde I_0\neq\emptyset$. Fix $i_0\in \tilde I_0\cap I_j$ and $i_1\in I_j \setminus \tilde I_0$. The relation between the $(s_{i})_{i\in I_j}$ can be written as $\sum_{i\in I_j}\alpha_i s_i=0$ with the $\alpha_i$ positive integers. 
Then we can define a one parameter subgroup $\rho$: 
$\C^{\times}\hookrightarrow T$,  via 
\[s_{i}(\rho(t))=\left\{\begin{array}{l l}  
t^{\alpha_{i_1}} & \textrm{if $i=i_0$}\\
t^{-\alpha_{i_0}} & \textrm{if $i'=i_1$}\\
1 & \textrm{else}\\
\end{array}\right.
\] 
Then $V_{\{i_0\}}=Lie(\rho(\C^{\times}))\cdot x\subset\tf\cdot x$ and, whenever $(x,y)\in \muz$, we have $y_{i_0}=0$. Since $\rho(t)$ acts on $y_{i_1}$ by multiplication by $t^{\alpha_{i_0}}$, we get that $\lim_{t\rightarrow 0}(\rho(t)\cdot(x,y))$ exists and that its first component lies in $J_{\tilde I\setminus\{i_0\}}$. In particular, $T.(x,y)$ is not closed.

$\bullet$ Assume now that $(T,V)$ is not visible. We want to find $(x,y)$ semisimple such that $x$ is not semisimple. Let $I\subset[\![1,n]\!]$ be such that $J_I$ contains infinitely many nilpotent orbits. Then $0\notin CH(\{s_i|i\in I)$ (Proposition \ref{nilpss}) and $\rk S_I<\dim J_I=\#I$ \eqref{tspaceI}. Thus there exists non-trivial linear relations $\sum_{i\in I}\alpha_is_i=0$ with integer coefficients and for any such relation the $\alpha_i$ are not all of the same sign. We can therefore write a relation of the form $\sum_{i\in I'} \beta_i s_i=\sum_{i\in I''} \beta_i s_i$ with each $\beta_i$ positive, $I',I''\neq\emptyset$ and $I'\sqcup I''\subset I$. Let $x=(x_i)_i \in J_{I'}$ and, analogously in $V^*$, let $y=(y_i)_{i} \in J_{I''}\subset V_{I''}^*\subset V^*$. From this, we get that $x$ is non-zero nilpotent, that $y\in V_{I''}^*\subset (V_{I'})^{\perp}\subset (\tf\cdot x)^{\perp}$ and that \[T.(x,y)\subset \left\{(x',y')\in V_{I'}\times V_{I''}^*\left|\,\prod_{i\in I'} (x'_i)^{\beta_i}\prod_{i\in I''} (y'_i)^{\beta_i}=\prod_{i\in I'} (x_i)^{\beta_i}\prod_{i\in I''} (y_i)^{\beta_i}\right\}\right..\] This last subset is a $T$-invariant closed subset of $V$ which does not intersect $\{0\}\times V^*$. Let $(x',y')$ be in the closed orbit of $\overline{T.(x,y)}$. Since $x'\subset J_{\tilde I}$ with $\tilde I\subset I'$, $x'$ is non-zero nilpotent hence not semisimple. 
\end{proof}

Proposition \ref{bijection} shows that the visibility assumption in Conjecture~\ref{conjA} is necessary, since the statement fails set-theoretically in any non-visible case..

When $\K=\C$, we can state the following:
\begin{corollary}\label{conjA_tori}
Conjecture \ref{conjA} holds true for representations of tori
\end{corollary}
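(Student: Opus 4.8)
The plan is to derive Corollary~\ref{conjA_tori} from the results already assembled for tori, using the reformulation of Conjecture~\ref{conjA} given in Remark~\ref{rk_conjA}. So I would first assume $(T,V)$ is visible (this is the hypothesis of the conjecture; for tori, visibility and polarity coincide by Corollary~\ref{vis_so_pol}, so the polar hypothesis is automatic), and then verify the two conditions (1) and (2) of Remark~\ref{rk_conjA}: that the natural Poisson morphism $\cc\times\cv/W\to\muz\GIT T$ is dominant, and that $\muz\GIT T$ is normal.

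Condition (2) is the easy half: normality of $\muz\GIT T=(V\oplus V^*)\SPR T$ is exactly Proposition~\ref{normal_quot}, which holds for \emph{any} torus representation, visible or not. For condition (1), I would use Proposition~\ref{bijection}: since $(T,V)$ is visible, the closed $T$-orbits in $\muz$ are precisely those of the form $T.(x,y)$ with both $T.x$ and $T.y$ closed, i.e.\ with $x\in V$ and $y\in V^*$ semisimple. By Corollary~\ref{vis_so_pol} and its proof, the Cartan subspace $\cc=\{x(\underline w)\mid \underline w\in\C^l\}$ meets every closed $T$-orbit in $V$, and dually $\cv$ meets every closed $T$-orbit in $V^*$; hence every closed orbit in $\muz$ is of the form $T.(x,y)$ with $x\in\cc$, $y\in\cv$. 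Since points of $\muz\GIT T$ correspond bijectively to closed orbits in $\muz$, this shows the image of $\cc\times\cv$ in $\muz\GIT T$ is all of $\muz\GIT T$, so the morphism $\cc\times\cv/W\to\muz\GIT T$ is surjective, in particular dominant. One should check that $(\cc\times\cv)\cap\muz$ is itself the whole of $\cc\times\cv$ (the moment-map condition is automatically satisfied on $\cc\times\cv$ because $\tf\cdot\cc$ and $\cv$ are built to be orthogonal, as in the proof of Proposition~\ref{normal_quot}), so that the composite $\cc\times\cv\to\muz\to\muz\GIT T$ really is defined and factors through $\cc\times\cv/W$.

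With (1) and (2) in hand, Remark~\ref{rk_conjA} gives that the injective Poisson morphism $\cc\times\cv/W\to\muz\GIT T$ is bijective onto a normal target, hence an isomorphism of Poisson varieties, which is the statement of Conjecture~\ref{conjA} for $(T,V)$.

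The main obstacle I anticipate is the bookkeeping needed to identify ``every closed orbit in $\muz$ comes from $\cc\times\cv$'' precisely with the dominance/surjectivity in Remark~\ref{rk_conjA}(1): one must match the combinatorial description of closed orbits (via semisimplicity of both coordinates, Proposition~\ref{bijection} and Proposition~\ref{nilpss}) with the explicit Cartan subspaces $\cc,\cv$ from Corollary~\ref{vis_so_pol}, and confirm that distinct points of $(\cc\times\cv)/W$ are not needed beyond what the injectivity in Remark~\ref{rk_conjA} already grants. Everything else is a direct appeal to Proposition~\ref{normal_quot}, Proposition~\ref{bijection}, Corollary~\ref{vis_so_pol} and Remark~\ref{rk_conjA}, so the proof should be short.
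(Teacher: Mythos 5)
Your overall strategy is the paper's: reduce to conditions (1) and (2) of Remark~\ref{rk_conjA}, get (2) from Proposition~\ref{normal_quot}, and get (1) from Proposition~\ref{bijection} together with the explicit Cartan subspaces. Condition (2) and the reduction via Corollary~\ref{vis_so_pol} are fine. The gap is in your treatment of condition (1), exactly at the step you flagged as ``bookkeeping''. From ``$\cc$ meets every closed $T$-orbit in $V$'' and ``$\cv$ meets every closed $T$-orbit in $V^*$'' you conclude ``hence every closed orbit in $\muz$ is of the form $T.(x,y)$ with $x\in\cc$, $y\in\cv$''. This inference is not valid as stated: the two facts give elements $t_1,t_2\in T$ with $t_1\cdot x\in\cc$ and $t_2\cdot y\in\cv$, but you need a \emph{single} group element doing both, and conjugating $y$ into $\cv$ may move $x$ back out of $\cc$. (Also, the fact that every closed orbit of a polar representation meets the Cartan subspace is a Dadok--Kac theorem, not something contained in Corollary~\ref{vis_so_pol} or its proof, which only exhibits $\cc$ as a Cartan subspace.)

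The repair is what the paper does, and it sidesteps the simultaneity problem entirely. For dominance one only needs the \emph{generic} closed orbits of the irreducible (Proposition~\ref{normal_quot}) variety $(V\oplus V^*)\SPR T$; for these $x$ is regular, so after conjugating $x$ into $\cc$ one has $\tf\cdot x=\tf\cdot\cc$, and therefore $y\in(\tf\cdot x)^{\perp}=(\tf\cdot\cc)^{\perp}=\cv\oplus U^{\vee}$ \emph{automatically}, with no further conjugation of $y$ required; then $T.y$ closed and $y\in\cv\oplus U^{\vee}$ force $y\in\cv$ by \cite[Corollary~2.5]{DK85}. In other words, it is the moment-map condition itself, not a second application of ``closed orbits meet the Cartan subspace'', that places $y$ in (almost) the right subspace. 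If you insisted on genuine surjectivity rather than dominance you would have to handle non-regular $x$ as well, which is more work and is not needed for Remark~\ref{rk_conjA}(1).
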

\begin{proof}
Note that the polar assumption is redundant by Corollary \ref{vis_so_pol}. 
According to Proposition \ref{bijection}, the generic elements of the irreducible (Proposition \ref{normal_quot}) variety $V\oplus V^*\SPR T$ are closed orbits of the form $T.(x,y)$ with $x$ regular (i.e. with T.x closed of maximal dimension among closed orbits). We may assume that $x\in \cc$. Then $y\in (\tf\cdot x)^{\perp}=(\tf\cdot \cc)^{\perp}=\cv\oplus U^{\vee}$ and, since $T.y$ is closed, we get $y\in \cv$ \cite[Corollary 2.5]{DK85}. Hence (1) of Remark \ref{rk_conjA} holds 
Statement (2) follows from Proposition \ref{normal_quot}.
\end{proof}

\section{Null-fiber of the moment map - $\theta$ case}

\label{thetaGroups}
A standard reference for the theory of $\theta$-representations is \cite{Vin76}. These representations are visible and polar but they are not always stable. A classification of the irreducible $\theta$-representations with their main features can be found in \cite[\S 3]{Kac80}.  

Let us first introduce some notation. Let $\hh=\bigoplus_{i\in \Zm} \hh_i$ be a reductive Lie algebra equipped with a $\ZZ_m$-grading, that is $[\hh_{i},\hh_j]\subset \hh_{i+j}$. Let $H$ be any connected algebraic group having $\hh$ as its Lie algebra. We consider the adjoint representation of $G:=H_0$ on $V:=\hh_1$ where $H_0$ is the connected subgroup of $H$ with Lie algebra $\hh_0$. Fix $\omega$ an $m$-th root of $1$. We denote by $\theta$ the automorphism of $\hh$ given by $\theta(x)=\omega^i x$ for $x\in \hh_i$. If $\af$ is a $\theta$-stable subspace of $\hh$, we can decompose $\af=\bigoplus \af_i$ with $\af_i:=\af\cap\hh_i$. We then denote the dual direct sum by $\af^*=\bigoplus \af_i^{\vee}$. We will identify $\af_i^{\vee}$ and $(\af_i)^*$. 

We need to understand $\muz$. 
From \cite[\S1.2 and \S2.2]{Vin76}, the $\theta$-representation $(H_0,\hh_1)$ is isomorphic to $(\prod_k (H^{(k)})_0,\prod (\hh^{(k)})_1)$ for some graded Lie algebras $\hh^{(k)}=\bigoplus(\hh^{(k)})_i$ which are either simple or abelian. The $((H^{(k)})_0,(\hh^{(k)})_1)$ with $\hh$ simple are called the \emph{simple components} of $(H_0,\hh_1)$. Then $\muz$ is the direct product of the $\mu_{k}^{-1}(0)$ where $\mu_k$ is the moment map associated to the representation $((H^{(k)})_0,(\hh^{(k)})_1)$. In the abelian case, $\muz=\hh_1\oplus\hh_1^*$. From now on, we will assume that $\hh$ is simple.

The Killing form $L(\cdot,\cdot)$ is a non-degenerate symmetric bilinear form on $\hh\times \hh$ which is invariant under $H$ and $\theta$. Via $L$, $V^*\cong \hh_1^{\vee}$ identifies to $\hh_{-1}$. If $x \in \hh$ and $\af\subset \hh$, we denote the centraliser of $x$ in $\af$ by $\af^x=\{ y \in \af \ |\ [x,y]=0\}$. Assume that $\varphi\in V^*$ corresponds to $y\in \hh_{-1}$ via $L$. Then the condition $\varphi\in (\gg\cdot x)^{\perp}$ is read $0=L([\gg,x],y)=L(\hh_0,[x,y])$, that is $[x,y]=0$ since $[x,y]\in \hh_0$ and $L$ is non-degenerate on $\hh_0\oplus\hh_0$. Thus $\muz$ is isomorphic to the commuting scheme:
\begin{equation}\muz\cong \{(x,y)\in \hh_1\times\hh_{-1}|\, [x,y]=0\}.\end{equation}


%

We now want to define a suitable locally closed cover of $\hh_1$ satisfying the conditions stated in Section \ref{gener}
\begin{definition}
We say that a Levi subalgebra $\ll$ of $\hh$ \emph{arises from $\hh_1$} if there exists a semisimple element $s\in \hh_1$ such that $\ll=\hh^s$. 
\end{definition}
In other words, $(\zz_{\ll})_{reg}\cap \hh_1\neq\emptyset$ where $\zz_{\ll}$ is the center of $\ll$ and $(\zz_{\ll})_{reg}$ denotes the set of elements of $\zz_{\ll}$ whose $H$-orbit is of maximal dimension.
Such a Levi $\lf$ is clearly $\theta$-stable. Hence $\lf=\bigoplus \lf_i$ gives rise to a $\Zm$-graded reductive Lie algebra. We denote by $L_0$ the connected subgroup of $H$ with Lie algebra $\lf_0$. 

Recall from \cite{Vin76} that any $x\in\hh_1$ has a Jordan decomposition $x_s+x_n$ with $x_s,x_n\in \hh_1$, the semisimple and nilpotent part of $x$, respectively.
\begin{definition}
Given $\ll$ a Levi subalgebra of $\hh$ arising from $\hh_1$, and  $\OO$ a nilpotent $L_0$-orbit in $\ll_1$, we define the \emph{decomposition class} associated to $(\lf,\OO)$ as 
\[J(\lf,\OO):=H_0\cdot\{(\zz_{\ll})_{reg}\cap \hh_1+\OO\}\]
\end{definition}
Alternatively, $x\in J(\lf,\OO)$ if and only if $x$ is $H_0$-conjugate to an element $x'\in \ll_1$ such that $\lf=\hh^{x'_s}$ and $\OO=L_0\cdot x'_n$. Hence decomposition classes are equivalence classes, so they form a partition of $\hh_1$. Next, we adapt some results of \cite{Bro}  (see also \cite[\S39]{TY}).
\begin{proposition}\label{dec_classes}
Let $J(\lf,\OO)$ be a decomposition class. 
\begin{enumerate}[label=(\roman*)]
\item If $x\in J(\lf,\OO)$, then $J(\lf,\OO)=H_0\cdot \{y\in \hh_1|\, \hh^y=\hh^x\}$
\item $J(\lf,\OO)$ is an irreducible locally closed subset of $\hh_1$.
\item If $x\in J(\lf,\OO)$ then $\dim H_0.x=\dim \hh_1-\dim \lf_1+\dim \OO$. 
\item $\mod(H_0,J(\lf,\OO))=\dim (\zz_{\lf})_1$.
\end{enumerate}
\end{proposition}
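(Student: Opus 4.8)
The plan is to deduce everything from the structural description of decomposition classes together with the $\theta$-analogues of the centralizer facts used in the classical (ungraded) setting of \cite{Bro,TY}. The starting point is the observation that, since $\lf = \hh^s$ for a semisimple $s \in \hh_1$, the graded reductive Lie algebra $\lf = \bigoplus_i \lf_i$ behaves with respect to $L_0 \acts \lf_1$ exactly as $\hh$ does with respect to $H_0 \acts \hh_1$; in particular the Jordan decomposition in $\hh_1$ restricts to the one in $\lf_1$. For (i), I would first check the inclusion $\supseteq$: if $y \in \hh_1$ with $\hh^y = \hh^x$, then $\hh^{y_s} \supseteq \hh^y = \hh^x$, and comparing centers (both equal the center of $\hh^y=\hh^x=\lf$) forces $\hh^{y_s}$ to be conjugate to $\lf$ after possibly moving $y$ by $H_0$; once $\hh^{y_s}=\lf$ the condition $\hh^y = \hh^x$ pins down the nilpotent parts to lie in the same $L_0$-orbit $\OO$, using that $\lf^{y_n}=\hh^y\cap\lf$ determines $y_n$ up to $L_0$-conjugacy within a fixed nilpotent orbit (this is the graded version of the fact that the centralizer of a nilpotent element determines its orbit among those with a fixed centralizer dimension — here one uses that $y_n$ is distinguished in $\lf$ in the appropriate sense). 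The reverse inclusion $\subseteq$ is the easier direction: along $(\zz_\lf)_{\reg}\cap\hh_1 + \OO$ the centralizer $\hh^x = \hh^{x_s}\cap\hh^{x_n} = \lf^{x_n}$ is constant, and $H_0$-conjugation carries it along.

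For (ii), irreducibility follows because $J(\lf,\OO)$ is the image of the irreducible variety $H_0 \times \big((\zz_\lf)_{\reg}\cap\hh_1 + \OO\big)$ under the action map; the second factor is irreducible since $(\zz_\lf)_{\reg}\cap\hh_1$ is a nonempty open subset of the vector space $(\zz_\lf)_1$ and $\OO$ is irreducible. Local closedness I would obtain from (i): the set $\{y\in\hh_1 \mid \hh^y = \hh^x\}$ is locally closed (it is the intersection of the closed condition $\lf \subseteq \hh^y$ with the open condition $\dim\hh^y \leqslant \dim\lf$, intersected with the locally closed locus where $y_s$ is $L_0$-conjugate into $(\zz_\lf)_{\reg}$), and applying the action of the algebraic group $H_0$ to a locally closed set keeps it constructible; one then upgrades constructible plus the homogeneity under $(\Cs)$-type scalings / the fact that it is a single "sheet-like" piece to locally closed, as in \cite{Bro}. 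For (iii), pick the representative $x = s + x_n$ with $s\in(\zz_\lf)_{\reg}\cap\hh_1$ and $x_n\in\OO\subset\lf_1$; then $\hh^x = \lf^{x_n}$, so $\dim H_0.x = \dim\hh_0 - \dim\hh_0^x$. Now $\hh_0^x = \lf_0^{x_n}$, and $\dim\lf_0^{x_n} = \dim\lf_0 - \dim L_0.x_n = \dim\lf_0 - (\dim\lf_1 - \dim\lf_1^{x_n})$ — here one uses the $\theta$-analogue of the $\mathfrak{sl}_2$-type equality $\dim\mathfrak g_0^{e} = \dim\mathfrak g_1^{e}$ relating the even and odd parts of a centralizer of a nilpotent in a graded semisimple algebra. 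Substituting and using $\dim\hh_0 - \dim\lf_0 = \dim\hh_1-\dim\lf_1$ (which follows from $\dim\hh_0-\dim\hh_1 = \dim\lf_0-\dim\lf_1$, itself a consequence of $\hh^s = \lf$ with $s$ even... wait, $s\in\hh_1$, so rather one uses that $\hh/\lf = \bigoplus(\hh/\lf)_i$ with $\dim(\hh/\lf)_0=\dim(\hh/\lf)_1$ since $\ad s$ acts invertibly on $\hh/\lf$ shifting degree by $1$) gives exactly $\dim H_0.x = \dim\hh_1 - \dim\lf_1 + \dim\OO$. Finally (iv): $J(\lf,\OO)$ has constant orbit dimension by (iii), so $\mod(H_0,J(\lf,\OO)) = \dim J(\lf,\OO) - \dim H_0.x$; and $\dim J(\lf,\OO) = \dim H_0.x + \dim(\zz_\lf)_1$ because the class fibers over $(\zz_\lf)_{\reg}\cap\hh_1$-mod-$W$-type data with fibers of dimension $\dim H_0.x$ transverse to the $\dim(\zz_\lf)_1$-dimensional family of semisimple parts — more precisely the map $J(\lf,\OO)\to \{H_0\text{-conjugates of }(\zz_\lf)_{\reg}\cap\hh_1\}$, $x\mapsto H_0.x_s$, has image of dimension $\dim(\zz_\lf)_1 + \dim H_0.s - \dim H_0.s = \dim\hh_1 - \dim\lf_1 + \dim(\zz_\lf)_1 - (\dim\hh_1-\dim\lf_1-\dim(\zz_\lf)_1)$... cleaner: count $\dim J(\lf,\OO) = \dim\hh_1 - \dim\lf_1 + \dim\OO + \dim(\zz_\lf)_1$ directly as the dimension of the image of $H_0\times\big((\zz_\lf)_{\reg}\cap\hh_1+\OO\big)$, subtracting the dimension of a generic fiber of the action map, which equals $\dim\{g\in H_0 \mid g\cdot x\in (\zz_\lf)_{\reg}\cap\hh_1+\OO\}$; this stabilizer-type subgroup has dimension $\dim N_{H_0}(\lf) - \dim L_0.x_n$ reorganizing to the claimed formula, and then subtract $\dim H_0.x$ from (iii).

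The main obstacle I anticipate is (i), specifically the direction showing that $\hh^y = \hh^x$ forces $(y_s,y_n)$ into the prescribed $(\lf,\OO)$ up to $H_0$: this requires the graded analogue of two classical facts — that a Levi is recovered from its center (here: $\zz_{\hh^{y_s}}$-vs-$\hh^{y_s}$ in the graded setting, needing that $y_s\in(\zz_{\hh^{y_s}})_{\reg}\cap\hh_1$ automatically) and that nilpotent orbits with a fixed reductive centralizer inside a given Levi are separated by the full centralizer in $\hh$. Both are available in the literature on $\theta$-groups (Vinberg, and the graded Jordan theory), but assembling them cleanly — and handling the subtlety that two non-conjugate Levis arising from $\hh_1$ could a priori have conjugate centers — is where the real work lies; the remaining parts (ii)–(iv) are then bookkeeping with dimension counts built on (iii).
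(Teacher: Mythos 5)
Your overall strategy for (ii)--(iii) broadly matches the paper's, and (iii) is essentially correct: $\hh^x=\lf^{x_n}$ together with the invertibility of $\ad x_s$ on $\hh/\lf$ (which shifts degree by one, so all graded pieces of $\hh/\lf$ have equal dimension) is exactly the computation the paper performs via the sum $\qq$ of the nonzero generalized eigenspaces of $\ad x$. But there are genuine gaps elsewhere. First, in (i) the hard inclusion is not proved: one must show that $(\lf')^{n}=(\lf')^{x_n}$ (equality of centralizers, not just of their dimensions) forces $n\in\OO$, and the principle you invoke --- that the centralizer determines the orbit "among those with a fixed centralizer dimension", justified by $y_n$ being "distinguished" --- is both misstated and inapplicable: $x_n$ is an arbitrary nilpotent element of $\lf_1$ and need not be distinguished. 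The missing idea is the paper's: the set $\mathcal U=\{n\in\lf'_1\mid (\lf')^n=(\lf')^{x_n}\}$ is a dense open subset of the \emph{vector space} $(\lf'_1)^{x_n}$ by \cite[35.3.3]{TY}, hence irreducible; it consists of nilpotent elements all of the same $L_0$-orbit dimension, and since the nilpotent cone of $\lf'_1$ is a finite union of $L_0$-orbits, $\mathcal U$ lies in the closure of a single orbit $\OO'$ and then, by equality of orbit dimensions and $x_n\in\mathcal U\cap\OO$, inside $\OO=\OO'$ itself. Second, for local closedness in (ii): the $H_0$-saturation of a locally closed set is in general only constructible, and your proposed "upgrade" is exactly the step that needs an argument. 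The paper sidesteps this by writing $J(\lf,\OO)=\hh_1\cap\varphi^{-1}(H_0\cdot(\hh^x))$, where $\varphi:y\mapsto\hh^y$ is a morphism from the locally closed set $\{y\mid\dim\hh^y=m\}$ to a Grassmannian and $H_0\cdot(\hh^x)$ is an orbit, hence locally closed, so its preimage is too.

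Third, (iv) as written does not close. Your fiber count is off: the generic fiber of the action map $H_0\times\bigl((\zz_\lf)_{\reg}\cap\hh_1+\OO\bigr)\to J(\lf,\OO)$ over $x$ is $\{g\mid g^{-1}\cdot x\in(\zz_\lf)_{\reg}\cap\hh_1+\OO\}$, which contains $L_0$ and is contained in $N_{H_0}(\lf)$, hence has dimension $\dim\lf_0$, not $\dim N_{H_0}(\lf)-\dim L_0\cdot x_n$; with your value the count overshoots $\dim J(\lf,\OO)$ by $\dim\OO$ (with $\dim\lf_0$ it does recover $\dim J(\lf,\OO)=\dim H_0\cdot x+\dim(\zz_\lf)_1$). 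The paper's route is shorter: the tangent space of $J(\lf,\OO)$ at $x$ is $\hh_0\cdot x+(\zz_\lf)_1$, and the eigenspace decomposition from (iii) shows $\hh\cdot x\cap\zz_\lf=\{0\}$, so the sum is direct and the dimension formula, hence (iv), follows from constancy of the orbit dimension along the class.
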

\begin{proof}
There is no loss of generality in assuming that $x$ satisfies $\hh^{x_s}=\lf$ and $\OO=L_0\cdot x_n$.

(i) From \cite[35.3.3, 39.1.2, 39.1.1]{TY}, we have $\{y\in \hh_1| \,\hh^y=\hh^x\}=\{y\in \lf_1|\, \hh^{y_s}=\ll, \,(\lf')^{y_n}=(\lf')^{x_n}\}$ where $\lf':=[\lf,\lf]$. Since $L_0$ acts trivially on $(\zz_{\ll})_{reg}$, this yields $J(\lf,\OO)\subset H_0\cdot \{y\in \hh_1| \hh^y=\hh^x\}$.
Note that the condition $\hh^{y_s}=\ll$ implies that $y, y_s,y_n\in\lf_1$. In order to prove the converse inclusion, it is therefore enough to show that $\mathcal U:=\{n\in\lf'_1| \, (\lf')^n=(\lf')^{x_n}\}\subset \OO$.
Let $n\in \mathcal U$. Then $\lf_0^n=\lf_0^{x_{n}}$ so $\dim L_0\cdot n=\dim L_0\cdot x_n=\dim \OO$.
The subset $\mathcal U$ is a dense open subset of $(\lf'_1)^{x_n}$ by \cite[35.3.3]{TY}, hence $\mathcal U$ is irreducible. From \cite[35.3.4]{TY}, $\mathcal U$ is included in the nilpotent cone of $\lf'_1$ which consists of fintely many $L_0$-orbits. Let $\OO'$ be the $L_0$-nilpotent orbit whose intersection with $\mathcal U$ is dense in $\mathcal U$. Then $\mathcal U\subset \overline{\OO'}$. Since all the elements of $\mathcal U$ share the same $L_0$-orbit dimension and $x_n\in \OO\cap \mathcal U$, we can eventually conclude that $\OO=\OO'$ and $\mathcal U\subset \OO$.  
%
%

(ii) It is clear from the definition that $J(\lf,\OO)$ is irreducible. Set $m=\dim \hh^x$ for some $x\in J(\lf,\OO)$. By (i), it does not depend on the choice of $x$. The map $\varphi:y\mapsto \hh^y$, from $\{y\in \hh |\dim \hh^{y}=m\}$ to the Grassmannian of $m$-dimensional subspaces in $\hh$, is a morphism of varieties \cite[19.7.6, 29.3.1]{TY}. By (i), $J(\lf,\OO)=\hh_1\cap \varphi^{-1}(H_0\cdot(\hh^x))$ with $\hh^x$ seen as an element of the Grassmannian. As orbits are locally closed sets, so are their inverse images by morphisms of varieties. Hence $J(\lf,\OO)$ is locally closed in $\{y\in \hh |\dim \hh^{y}=m\}$ and the result follows.

(iii) Consider $\ad x$ as an endomorphism of $\hh$. Then $\lf$ is its generalized eigenspace associated to $0$. Let us denote by $\qq$ the sum of the other generalized eigenspaces. We have $\hh=\lf\oplus\qq$ and $\hh\cdot x=[\lf\oplus\qq, x]=(\ad x)(\lf)\oplus(\ad x)(\qq)=(\ad x_n)(\lf)\oplus\qq$. Intersect with $\hh_1$ yields $\hh_0\cdot x=\lf_0\cdot x_n\oplus\qq_{1}$. In particular, $\codim_{\hh_1} H_0\cdot x=\codim_{\lf_1} L_0\cdot x_n$.

(iv) From the computations in (iii), we also get that $\hh\cdot x$ has trivial intersection with $\zz_{\ll}$. Since $\hh_{0}\cdot x+(\zz_{\ll})_1$ is the tangeant space of $J(\lf,\OO)$ at $x$, we get $\dim J(\ll,\OO)=\dim H_0\cdot x+\dim (\zz_{\ll})_1$. Since the elements of $J(\ll,\OO)$  all share the same orbit dimension, we have $\mod(H_0,J(\ll,\OO))=\dim J(\ll,\OO)-\dim H_0\cdot x$. The result follows.
%
\end{proof}

From Proposition \ref{dec_classes}, the decomposition classes satisfy the requirements of the covers used in Section \ref{gener}. However, note that two different data $(\lf_i,\OO_i)$, $i=1,2$ can give rise to the same decomposition class.
In the sequel, we fix a Cartan subspace $\cc\subset \hh_1$, that is a maximal subspace of commuting semisimple elements. The number $\dim \cc$ is called the \emph{rank} of $(H_0,\hh_1)$.

\begin{definition}  \label{restriction}
We say that a property $\mathcal P$ is \emph{hereditary} if, whenever $\mathcal P$ holds on a $\theta$-representation $(H_0,\hh_1)$, it holds on the restricted $\theta$-representation $((H_0^s)^{\circ}, \hh_1^t)$ for any semisimple element $t\in \hh_1$.  Here, $(H_0^s)^{\circ}$ is the connected subgroup of $H_0$ whose Lie algebra is $\gg_0^s$.
\end{definition}

Next, we recall a few classical facts.
\begin{lemma}\label{Vin_prelim} 
Let $(H_0,\hh_1)$ be a $\theta$-representation with Cartan subspace $\cc\subset \hh_1$.
\begin{enumerate}[label=(\roman*)]
\item For an element in general position $s\in \cc$, we have $\hh^s=\hh^\cc$ and $(\zz(\hh^s))_1=\cc$.
\item $(H_0,\hh_1)$ is locally free if and only if $\dim \hh_1=\dim \hh_0+\dim \cc$.
\item$(H_0,\hh_1)$ is stable if and only if there exists $s\in \cc$ such that $\hh_1^s=\zz(\hh^s)_1=\cc$.
\item Stability and local freeness are both hereditary properties.
\end{enumerate}
\end{lemma}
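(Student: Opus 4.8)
The plan is to derive (i)--(iii) from Vinberg's structure theory of $\theta$-groups \cite{Vin76}, supplemented by elementary maximality arguments, and then to read off (iv) from the numerical criteria (ii) and (iii). For (i): since $\cc$ is commutative one has $\hh^\cc=\bigcap_{s\in\cc}\hh^s$, and $s\mapsto\dim\hh^s$ is upper semicontinuous on the affine space $\cc$, so $\hh^s=\hh^\cc$ for $s$ in a dense open subset --- which is what ``in general position'' means. Put $\lf:=\hh^\cc$, a reductive $\theta$-stable subalgebra; its centre consists of semisimple elements of $\hh$, and $\cc\subseteq(\zz(\lf))_1$ because $\cc\subseteq\hh_1$ centralises $\lf$. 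Conversely any $z\in(\zz(\lf))_1$ is a semisimple element of $\hh_1$ commuting with $\cc$, so $\cc+\K z$ is again a subspace of pairwise commuting semisimple elements of $\hh_1$, and maximality of $\cc$ forces $z\in\cc$; hence $(\zz(\lf))_1=\cc$.

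For (ii) and (iii) I would analyse a point $x\in\hh_1$ in general position through its Jordan decomposition $x=x_s+x_n$. Up to $H_0$-conjugacy $x_s$ is a general element of $\cc$, so by (i) $\hh^{x_s}=\lf$, and $x_n$ is then a general nilpotent element of the restricted $\theta$-representation $(L_0,\lf_1)$. Writing $\lf_1=\cc\oplus[\lf,\lf]_1$ (from $\lf=\zz(\lf)\oplus[\lf,\lf]$ and $(\zz(\lf))_1=\cc$), the same maximality argument shows that $(L_0,[\lf,\lf]_1)$ has rank $0$, hence its general element is nilpotent; consequently $x_n$ lies in the dense nilpotent $L_0$-orbit of $\lf_1$, whose codimension in $\lf_1$ equals $\dim(\lf_1\GIT L_0)=\dim\cc$ (the nilpotent cone of the visible representation $(L_0,\lf_1)$ being equidimensional of that codimension). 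Combined with $\codim_{\hh_1}H_0\cdot x=\codim_{\lf_1}L_0\cdot x_n$ (Proposition~\ref{dec_classes}(iii)), this gives $\codim_{\hh_1}H_0\cdot x=\dim\cc$ for $x$ general; since $(H_0,\hh_1)$ is locally free exactly when this codimension equals $\dim\hh_1-\dim\hh_0$, we obtain (ii). For (iii): $(H_0,\hh_1)$ is stable iff its general element is semisimple, iff $x_n=0$ in general position, iff $[\lf,\lf]_1=0$, iff $\hh^s_1=\lf_1=\cc$ for general $s\in\cc$; and $(\zz(\hh^s))_1=\cc$ then holds automatically by (i). If $\hh^s_1=\cc$ for some particular $s\in\cc$, then $\dim\hh^s_1$ attains at that $s$ its minimal value $\dim\cc$ over $\cc$, so $\hh^s_1=\cc$ for general $s$ as well and the two formulations agree.

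For (iv): given a semisimple $t\in\hh_1$, set $\mm:=\hh^t$ and $M_0:=(H_0^t)^\circ$. Since semisimple elements of $\hh_1$ are $H_0$-conjugate into $\cc$, we may replace $t$ by a conjugate and assume $t\in\cc$; then $\cc$ is a Cartan subspace of $\mm_1=\hh^t_1$ as well, for $\cc\subseteq\mm_1$ and a strictly larger commuting family of semisimple elements of $\mm_1\subseteq\hh_1$ would contradict the maximality of $\cc$ in $\hh_1$. For stability: $t\in\cc$ yields $\hh^\cc\subseteq\hh^t=\mm$, hence $\mm^\cc=\mm\cap\hh^\cc=\hh^\cc$, so the criterion $[\mm^\cc,\mm^\cc]_1=0$ from (iii) for $(M_0,\mm_1)$ is literally the one for $(H_0,\hh_1)$. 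For local freeness: since $t$ is semisimple, $\ad t$ is a semisimple endomorphism of $\hh$ that shifts the $\Zm$-grading by $1$, so $\hh=\mm\oplus\qq$ with $\qq:=\ad t(\hh)=\bigoplus_i\qq_i$ a graded subspace and $\ad t$ restricting to a bijection of $\qq$ that cyclically permutes its graded pieces; therefore $\dim\qq_i$ is independent of $i$, whence $\dim\mm_1-\dim\mm_0=\dim\hh_1-\dim\hh_0=\dim\cc$, which is the rank of $(M_0,\mm_1)$, and (ii) gives that $(M_0,\mm_1)$ is locally free.

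I expect the main obstacle to be the dimension bookkeeping in (ii): one must be certain that the general point of $\hh_1$, after restriction to $\hh^\cc$, produces the \emph{dense} nilpotent $L_0$-orbit of $\lf_1$, of the expected codimension $\dim\cc$. This relies on Vinberg's analysis of the nilpotent cone of a $\theta$-group (finiteness of orbits, equidimensionality, codimension equal to $\dim(\hh_1\GIT H_0)$) and on the $H_0$-conjugacy of Cartan subspaces; with (ii) secured, (iii) and (iv) are essentially formal.
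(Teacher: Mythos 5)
Your proof is correct, and for parts (ii) and (iii) it is in substance the paper's own argument: both compute the codimension of a generic orbit by passing through the Jordan decomposition and the decomposition class $J(\hh^{\cc},\OO)$ with $\OO$ the dense nilpotent orbit of $[\lf,\lf]_1$ (Proposition~\ref{dec_classes}), and both read off stability as the vanishing of $[\hh^{\cc},\hh^{\cc}]_1$. One small simplification available to you in (ii): since the derived part of $(L_0,\lf_1)$ has rank $0$ and the representation is visible, $[\lf,\lf]_1$ \emph{is} the nilpotent cone of $\lf_1$ and is a finite union of $L_0$-orbits, so the dense nilpotent orbit automatically has codimension $\dim\cc$ in $\lf_1=\cc\oplus[\lf,\lf]_1$; you do not need equidimensionality of the nullcone. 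The genuine divergence is in (i) and, above all, (iv), which the paper simply delegates to \cite[\S3.2]{Vin76} and \cite[Lemma~6.3]{BLLT}, whereas you prove them directly. Your (iv) is self-contained and clean: after conjugating $t$ into $\cc$, heredity of stability follows from $\mm^{\cc}=\hh^{\cc}$, and heredity of local freeness from the observation that $\ad t$ is a semisimple endomorphism of degree $1$ for the $\Zm$-grading, so $\hh=\hh^{t}\oplus\Im(\ad t)$ with $\ad t$ permuting the graded pieces of its image cyclically and bijectively, forcing $\dim\qq_i$ to be constant and hence $\dim\mm_1-\dim\mm_0=\dim\hh_1-\dim\hh_0$. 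What this buys is independence from the external reference; the price is that you must (and implicitly do) use the compatibility of Jordan decompositions in $\hh$ and in $\hh^{t}$ to conclude that $\cc$ remains a Cartan subspace of $(\hh^{t})_1$, a standard fact worth making explicit.
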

\begin{proof}
(i) follows from \cite[\S3.2]{Vin76}.\\
Since $\theta$-representations are visible, it follows from Remark \ref{rk_F0F1} that $\mod(H_0,\hh_1)= \dim \hh_1-m_0$ where $m_0$ is the maximal orbit dimension in $\hh_1$.
By (i), Proposition \ref{dec_classes} (iv) and \eqref{modTVJ}, the maximal modality of a decomposition class is $\mod(H_0,\hh_1)=\dim \cc$.
Assertion (ii) follows. \\
From Proposition \ref{dec_classes} (iii), an element $x=x_s+x_n$ can have maximal orbit dimension in $\hh_1$ only if $L_0 \cdot x_n$ is of maximal dimension in $\lf_1$ with $\lf=\hh^{x_s}$. This is possible with $x_n=0$ only if $\lf'_1=\{0\}$ where $\lf'=[\lf,\lf]$. Since $\lf_1$ contains a Cartan subspace, this implies that $\lf_1$ is a Cartan subspace. On the other hand, assume that there exists $s\in \cc$ such that $\hh_1^s=\zz(\hh^s)_1=\cc$. Then $J(\hh^{s},\{0\})$ is a decomposition class of dimension $\dim H_0\cdot s+\mod(H_0,J(\hh^{s},\{0\}))=\dim \hh_1-\dim \hh^s_1+0+\dim \zz(\hh^s)_1=\dim \hh_1$ by Proposition~\ref{dec_classes}. Hence $J(\hh^{s},\{0\})$ is the open class in $\hh_1$. Since it is made of semisimple elements, the representation is stable.\\
(iv) follows from \cite[Lemma~6.3]{BLLT}.
\end{proof}

Recall from \cite[Proposition~4.3]{BLLT} that $\muz$ can be irreducible only if $(H_0,\hh_1)$ is stable.

\begin{theorem}\label{main_thm_theta}
Assume that $(H_0,\hh_1)$ is a stable and locally free $\theta$-representation with $\hh$ simple, then 
\begin{enumerate}[label=\roman*)]
\item $\muz$ is an irreducible and reduced complete intersection
\item $\muz$ is normal if and only if $(H_0,\hh_1)$ does not belong to Table~\ref{ex_bad}
\end{enumerate}
\end{theorem}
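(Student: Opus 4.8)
The plan is to reduce the $\theta$-case to the general machinery of Section~\ref{gener} exactly as was done for tori, and then treat the codimension-one strata by a case analysis driven by the classification. First I would observe that since $(H_0,\hh_1)$ is locally free, Lemma~\ref{Vin_prelim}(ii) gives $\dim\hh_1=\dim\hh_0+\dim\cc$, and since $\theta$-representations are visible, Remark~\ref{rk_F0F1} together with the computation in the proof of Lemma~\ref{Vin_prelim} shows $\mod(H_0,\hh_1)=\dim\cc=\dim\hh_1-\dim\hh_0$. Hence the hypothesis $\mod(H_0,V)=\dim V-\dim G$ of Corollary~\ref{cor_locfree} holds, so $\muz$ is automatically a complete intersection of dimension $2\dim\hh_1-\dim\hh_0$. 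For item i), irreducibility: by Proposition~\ref{dec_classes}(iv) a decomposition class $J(\lf,\OO)$ has $\mod(H_0,J(\lf,\OO))=\dim(\zz_\lf)_1$, which equals $\mod(H_0,\hh_1)=\dim\cc$ only when $(\zz_\lf)_1$ is a Cartan subspace of $\lf_1$, i.e.\ (using stability and Lemma~\ref{Vin_prelim}(iii)) only for the open class; every other class has strictly smaller modality. So by Corollary~\ref{cor_locfree}(iv), $\muz$ is irreducible, and then also reduced. (Alternatively one may cite \cite[Proposition~4.3]{BLLT} for the ``only if'' direction of stability vs.\ irreducibility and the present argument for the converse.)

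For item ii), by Corollary~\ref{cor_locfree}(v) normality is equivalent to the condition \eqref{carac_smooth_codim1}: for every decomposition class $J(\lf,\OO)$ with $\mod(H_0,J(\lf,\OO))=\dim\cc-1$, there must exist $(x,\varphi)\in\muzx{J(\lf,\OO)}$ with $\hh^{x,\varphi}=\{0\}$. By Proposition~\ref{dec_classes}(iv) such a class is one with $\dim(\zz_\lf)_1=\dim\cc-1$; geometrically this is the ``next-to-open'' stratum, governed by a restricted $\theta$-representation of rank one (the datum $\lf$ fails to be a Cartan-type Levi by exactly one dimension, so $\lf'_1$ carries a rank-one $\theta$-representation). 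The key reduction I would carry out is: after conjugating $x$ so that $x_s$ lies in $\cc$ with $\hh^{x_s}=\lf$, the centralizer condition $\hh^{x,\varphi}=\{0\}$ for a well-chosen $\varphi\in\hh_{-1}$ commuting with $x$ decouples into a condition on $\zz_\lf$ (automatically $\{0\}$ in the relevant directions since $x_s$ is regular in $\zz_\lf$) and a condition inside the rank-one restricted representation $((L_0^{x_s})^\circ,\lf'_1)$, namely that its null-fiber has a point with trivial stabilizer. Since stability and local freeness are hereditary (Lemma~\ref{Vin_prelim}(iv)), this restricted representation is again stable and locally free of rank one, and one is reduced to checking, for each rank-one stable locally free simple $\theta$-representation, whether its $\muz$ is normal — equivalently whether \eqref{carac_smooth_codim1} holds there, which for rank one means whether $\muz$ is smooth away from a codimension $\geq 2$ set.

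At this point I would invoke the classification of (irreducible, or simple) $\theta$-representations from \cite{Vin76, Kac80}: list the stable locally free ones of rank one, and for each compute whether the smooth locus of $\muz$ (the set $\{\hh^{x,\varphi}=\{0\}\}$ by Corollary~\ref{cor_locfree}(iii)) is large enough. The classical-type rank-one cases should always satisfy the criterion — typically because the relevant nilpotent orbit $\OO$ is the minimal nilpotent orbit and an explicit pair $(x,\varphi)$ with trivial centralizer can be written down — while the finitely many exceptional rank-one cases are exactly those collected in Table~\ref{ex_bad}, where the centralizer of every admissible $(x,\varphi)$ is forced to be nonzero (e.g.\ by a dimension count: the generic fiber of $\mu$ over the relevant stratum has too-large centralizer, or the $\SL_2$-triple attached to $\OO$ forces a nontrivial invariant). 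I expect the main obstacle to be precisely this last step: organizing the case analysis so that one does not have to inspect every decomposition class individually, but only the rank-one ``building blocks,'' and then doing the explicit exceptional-group computations (root-system bookkeeping for the centralizers $\hh^{x,\varphi}$) that distinguish the five bad cases of Table~\ref{ex_bad} from the good ones. The hereditarity lemma and the decomposition-class modality formula are what make this reduction to rank one clean; everything else is (substantial but) finite checking against the tables in \cite{Kac80}.
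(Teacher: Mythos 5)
Your plan follows the paper's route almost exactly: item i) via Remark~\ref{rk_F0F1} and Corollary~\ref{cor_locfree} together with the modality formula of Proposition~\ref{dec_classes}(iv), and item ii) by rewriting \eqref{carac_smooth_codim1} as a condition on the rank-one Levi blocks $(L_0,\lf'_1)$ (this is the paper's Lemma~\ref{lemmaP}), then classifying the stable locally free rank-one $\theta$-representations and checking the condition case by case. One remark on the details: for the classical rank-one cases the paper does not argue via minimal nilpotent orbits or explicit pairs $(x,\varphi)$; it shows in Proposition~\ref{init} that every stable locally free rank-one representation of classical type is one of the all-ones Kac diagrams $\Xnko$, for which $H_0$ is a torus, so that normality comes for free from the torus result (Theorem~\ref{irrcomp} via Lemma~\ref{111}). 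Your suggested route would require separate explicit constructions there.

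There is, however, one genuine gap. Your reduction proves the statement ``$\muz$ is normal if and only if no rank-one Levi block arising from $\hh_1$ fails condition \eqref{propP},'' whereas the theorem asserts ``normal if and only if $(H_0,\hh_1)$ itself is not in Table~\ref{ex_bad}'' --- and Table~\ref{ex_bad} lists only rank-one representations. To pass from the first statement to the second you must verify that none of the five bad rank-one blocks ever occurs as a \emph{proper} Levi $\lf\subsetneq\hh$ arising from $\hh_1$ inside a stable locally free $\theta$-representation of rank at least $2$; otherwise such a higher-rank representation would be non-normal without appearing in the table, and the stated equivalence would fail. The paper devotes Lemma~\ref{nonproper} to exactly this check: it constrains $\hh$ to be of type $E_7$ or $E_8$ and the order of $\theta$ to be a multiple of $9$ or $14$, and then rules these out by inspecting the possible Kac diagrams. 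Your proposal omits this step entirely, so as written it establishes only the ``block-wise'' criterion, not the theorem in the form stated.
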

\begin{corollary}
Under the hypothesis of the previous theorem and if $\hh$ is of classical type, then $\muz$ is normal 
\end{corollary}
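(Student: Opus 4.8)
The plan is to derive the corollary from Theorem~\ref{main_thm_theta}: a stable locally free $\theta$-representation $(H_0,\hh_1)$ with $\hh$ simple of classical type satisfies its hypotheses, so $\muz$ is an irreducible reduced complete intersection by part~(i), and by part~(ii) it is normal as soon as $(H_0,\hh_1)$ does not lie in Table~\ref{ex_bad}. Thus it suffices to show that no entry of Table~\ref{ex_bad} is of classical type, equivalently (by the description of that table via Corollary~\ref{cor_locfree}(v)) that condition~\eqref{carac_smooth_codim1} holds for every stable locally free $\theta$-representation of a simple classical $\Zm$-graded Lie algebra: for each decomposition class $J(\lf,\OO)$ with $\mod(H_0,J(\lf,\OO))=\dim\cc-1$ one must exhibit a point $(x,\varphi)\in\muzx{J(\lf,\OO)}$ with $\hh_0^{x,\varphi}=\{0\}$.

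I would handle this by a slice reduction to rank one. By Proposition~\ref{dec_classes}(iv) such a class has $\dim(\zz_\lf)_1=\dim\cc-1$; choosing a representative $x=s+n$ with $s\in\cc$, $\hh^s=\lf$ and $n\in\OO$, local freeness forces $(\zz_\lf)_0=\{0\}$, and a short computation identifies $\hh_0^{x,\varphi}$ with the joint centraliser in $[\lf,\lf]_0$ of the commuting pair $(n,y)$ obtained from $(x,\varphi)$, where $n$ now runs over a nilpotent orbit of the restricted $\theta$-representation on $[\lf,\lf]_1$ and $y$ over $([\lf,\lf]^{n})_{-1}$. Using heredity of stability and local freeness (Lemma~\ref{Vin_prelim}(iv)) and the decomposition of Section~\ref{thetaGroups} into simple components, this restricted representation is, after splitting off the torus $(\zz_\lf)_1$, a rank-one stable locally free $\theta$-representation attached to a simple classical $\Zm$-graded Lie algebra. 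So it is enough to verify~\eqref{carac_smooth_codim1} for all such rank-one blocks. Inspecting Kac's classification~\cite{Kac80}, together with the constraint $\dim\hh_1=\dim\hh_0+1$ of Lemma~\ref{Vin_prelim}(ii), one finds that the classical rank-one blocks are torus $\theta$-representations (and a couple of degenerate symmetric pieces reducing to these) --- for which $\muz$ is irreducible, hence normal, by Theorem~\ref{irrcomp}, so that~\eqref{carac_smooth_codim1} holds trivially; this is precisely the point at which the classical hypothesis is used, whereas over the exceptional algebras the rank-one blocks can be genuinely non-abelian and produce the five entries of Table~\ref{ex_bad}.

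The main obstacle is the case analysis behind this last claim: isolating from Kac's tables exactly the simple classical $\Zm$-graded Lie algebras carrying a rank-one stable locally free $\theta$-representation and confirming, family by family, that each is (up to an elementary reduction) a torus representation --- so that Theorem~\ref{irrcomp} applies --- rather than a potentially non-normal rank-one representation of a semisimple group. Granting this, the slice reduction above shows that~\eqref{carac_smooth_codim1} holds for every stable locally free $\theta$-representation with $\hh$ classical, so none occurs in Table~\ref{ex_bad}, and $\muz$ is normal by Theorem~\ref{main_thm_theta}(ii).
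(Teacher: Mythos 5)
Your proposal is correct and follows essentially the same route as the paper: reduce via Corollary~\ref{cor_locfree}(v) to condition \eqref{propP} on rank-one Levi blocks (the paper's Lemma~\ref{lemmaP}), classify the classical stable locally free rank-one blocks and observe they are all torus representations of type $\Xnko$ (Proposition~\ref{init}), and conclude normality for those from irreducibility via Theorem~\ref{irrcomp} (Lemma~\ref{111}). The classification step you flag as the main obstacle is indeed where the paper invests its effort, using Vinberg's four-case description of classical $\theta$-groups and the constraint $\dim\hh_1=\dim\hh_0+1$, exactly as you outline.
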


\begin{table}
\begin{minipage}{.70\linewidth}
\hspace{-1cm}\begin{tabular}{|c|c | c|c |c|}
\hline
Type& Kac diagram &m& $n$ & $\wfr$ \\
\hline
$E_6^{(1)}$ &\scalebox{0.7}{$\begin{array}{c c c c c}1 & 1 &0 & 1& 1\\ &&1 &&\\&&1&&\end{array}$}& 9&
\scalebox{1}{$\begin{array}{l} x_{\alpha_2+\alpha_4}\\+x_{\alpha_3+\alpha_4}\\+x_{\alpha_5+\alpha_4}\end{array}$}&
$\langle x_{\alpha_4}\rangle$\\
\hline
$E_7^{(1)}$ & \scalebox{0.7}{$\begin{array}{c c c c c c c}1 & 1 & 1&0 & 1& 1&1\\ &&&1 &&&\end{array}$}&14&idem &idem\\
\hline
\multirow{3}{*}{$E_8^{(1)}$}
&\scalebox{0.7}{$\begin{array}{c c c c c c c c} 1&1&0&1&1&1 &1&1\\ &&1&&&&&\end{array}$}&24&idem &idem\\
\cline{2-5}
& \scalebox{0.7}{$\begin{array}{c c c c c c c c} 1&1&0&1&0&1 &1&1\\ &&1&&&&&\end{array}$}&20&idem &idem\\
\cline{2-5}
&\scalebox{0.7}{$\begin{array}{c c c c c c c c} 1&0&1&0&1&0 &1&1\\ &&0&&&&&\end{array}$}&15&
\scalebox{0.9}{$\begin{array}{l}x_{\alpha_2+\alpha_3+\alpha_4}\\
+x_{\alpha_2+\alpha_4+\alpha_5}\\+x_{\alpha_5+\alpha_6+\alpha_7}\\+x_{\alpha_7+\alpha_8}\end{array}$} &$\langle x_{\alpha_2}, x_{\alpha_7}\rangle$\\
\hline
\end{tabular}
\caption{Exceptional non-normal}\label{ex_bad}
\end{minipage}\begin{minipage}{.40\linewidth}
\begin{tabular}{|c| c|}
\hline
Type  & Kac diagram \\
\hline
$G_2^{(1)}$ &\scalebox{1.5}{\tiny
\rule[-3ex]{0pt}{5ex}
\begin{picture}(40, 0)
\put(11,2){\circle{4}}
\put(9,-5){0}
\put(22,0){\vector(-1,0){10}}
\put(21,2){\line(-1,0){8}}
\put(22,4){\vector(-1,0){10}}
\put(23,2){\circle{4}}
\put(21,-5){1}
\put(25,2){\line(1,0){4}}
\put(31,2){\circle{4}}
\put(29,-5){1}
\end{picture}
}\\
\hline
$F_4^{(1)}$ & \scalebox{1.5}{\tiny
\rule[-3ex]{0pt}{5ex}
\begin{picture}(40, 0)
\put(1,2){\circle{4}}
\put(-1,-5){1}
\put(3,2){\line(1,0){4}}
\put(9,2){\circle{4}}
\put(7,-5){0}
\put(17.5,1){\vector(-1,0){7}}
\put(17.5,3){\vector(-1,0){7}}
\put(19,2){\circle{4}}
\put(17,-5){1}
\put(21,2){\line(1,0){4}}
\put(27,2){\circle{4}}
\put(25,-5){1}
\put(29,2){\line(1,0){4}}
\put(35,2){\circle{4}}
\put(33,-5){1}
\end{picture}}\\
\hline
$E_6^{(2)}$& 
\scalebox{1.5}{\tiny
\rule[-3ex]{0pt}{5ex}
\begin{picture}(40, 0)
\put(1,2){\circle{4}}
\put(-1,-5){1}
\put(3,2){\line(1,0){4}}
\put(9,2){\circle{4}}
\put(7,-5){1}
\put(10.5,1){\vector(1,0){7}}
\put(10.5,3){\vector(1,0){7}}
\put(19,2){\circle{4}}
\put(17,-5){0}
\put(21,2){\line(1,0){4}}
\put(27,2){\circle{4}}
\put(25,-5){1}
\put(29,2){\line(1,0){4}}
\put(35,2){\circle{4}}
\put(33,-5){1}
\end{picture}}\\
\hline
\end{tabular}
\caption{ Exceptional normal}\label{ex_good}
\end{minipage}
\end{table}

From \cite[Theorem~3.2]{Pan94} (or, alternatively, Remark \ref{rk_F0F1} and Corollary~ \ref{cor_locfree}), (i) holds. 
The remaining of the section is devoted to the proof of Theorem~\ref{main_thm_theta}~(ii).  For this, we first rephrase condition \eqref{carac_smooth_codim1} in Lemma \ref{lemmaP}.  Then we list all the $\theta$-representations of rank one in Proposition \ref{init}. In Lemma \ref{P_or_not_P}, we check which of them satisfies condition \eqref{propP}. Finally, we check that the non-normal cases of rank one do not yield any non-normal example of greater rank in Lemma \ref{nonproper}.

Recall that all Cartan subspaces of $\hh_1$ are conjugate.
From Lemma \ref{Vin_prelim} (i) (iii) and Proposition \ref{dec_classes} (iv), the decomposition class $J(\hh^{\cc},\{0\})$ is the only one with maximal modality $\mod(H_0,\hh_1)=\dim \cc$. Moreover, the decomposition classes with $\mod(H_0,J(\lf,\OO))=\dim \cc-1$ are the ones such that $\dim (\zz_{\lf})_1=\dim \cc-1$ and $\OO$ arbitrary. The condition on $\lf$ means that $(L_0,\lf'_1)$ is a $\theta$-representation of rank one, where $\lf'$ is the derived subalgebra of $\lf$.

Recall that, for $x\in \hh_1$ with $\lf=\hh^{x_s}$, we have $\hh_0^{x,y}\subset\hh^{x}\subset \lf$ and that $\lf_0=\lf'_0$ by Lemma~\ref{Vin_prelim}~(iv). Hence, using Corollary \ref{cor_locfree} (v), we have:
\begin{lemma}\label{lemmaP}
Under hypothesis of Theorem \ref{main_thm_theta}, $\muz$ is normal if and only if
\begin{equation}\label{propP}
\textrm{$\forall n\in \lf'_1$ nilpotent, $\exists n'\in (\lf')_{-1}^n$ such that $(\lf')_0^{(n,n')}=\{0\}$.} \tag{$\mathcal P$} 
\end{equation}
for any Levi subalgebra $\lf\subset \hh$ arising from $\hh_1$ such that $(L_0,\lf'_1)$ is of rank one:
\end{lemma}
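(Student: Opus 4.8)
The plan is to unwind Lemma \ref{lemmaP} into a purely computational criterion and then verify it case by case on the short list of rank-one $\theta$-representations. By Corollary \ref{cor_locfree}~(v) together with the identification of the decomposition classes of modality $\dim\cc-1$ made just before Lemma \ref{lemmaP}, normality of $\muz$ is equivalent to condition \eqref{carac_smooth_codim1} holding on each $\muzx{J(\lf,\OO)}$ with $\dim(\zz_\lf)_1=\dim\cc-1$, i.e. with $(L_0,\lf'_1)$ of rank one. Since $\hh_0^{x,y}\subseteq\lf$ and $\lf_0=\lf'_0$ (Lemma \ref{Vin_prelim}~(iv)), checking that some $(x,\varphi)\in\muzx{J(\lf,\OO)}$ has $\hh_0^{x,\varphi}=\{0\}$ reduces to finding, for the nilpotent part $n\in\lf'_1$, an element $n'\in(\lf')^n_{-1}$ with $(\lf')^{(n,n')}_0=\{0\}$; this is exactly \eqref{propP}. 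So the statement of Lemma \ref{lemmaP} is established the moment this dictionary between \eqref{carac_smooth_codim1} and \eqref{propP} is spelled out, and the real content is deferred to the later lemmas (Proposition \ref{init}, Lemma \ref{P_or_not_P}, Lemma \ref{nonproper}).

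Concretely, first I would recall that, under the hypotheses of Theorem \ref{main_thm_theta}, $\muz$ is a reduced complete intersection, so by Corollary \ref{cor_locfree}~(v) normality is equivalent to \eqref{carac_smooth_codim1}: for every decomposition class $J(\lf,\OO)$ of modality exactly $\dim\cc-1$, there exists $(x,\varphi)\in\muzx{J(\lf,\OO)}$ with $\gg^{x,\varphi}=\{0\}$. Next I would use Lemma \ref{Vin_prelim}~(i),(iii) and Proposition \ref{dec_classes}~(iv) to identify those classes: they are precisely the $J(\lf,\OO)$ with $\dim(\zz_\lf)_1=\dim\cc-1$, equivalently those for which $(L_0,\lf'_1)$ is a rank-one $\theta$-representation, with $\OO$ an arbitrary nilpotent $L_0$-orbit in $\lf_1$. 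Then I would translate the condition $\gg^{x,\varphi}=\{0\}$ at a point of $\muzx{J(\lf,\OO)}$: writing $x=x_s+x_n$ with $\hh^{x_s}=\lf$ and $x_n=n\in\lf'_1$, and $\varphi\leftrightarrow y\in\hh_{-1}$, the centralizer $\hh_0^{x,y}$ lies inside $\lf_0=\lf'_0$ and, since $x_s$ is central in $\lf$ and the $(\zz_\lf)$-part decouples, one checks $\hh_0^{x,y}=\{0\}$ can be arranged iff $(\lf')_0^{(n,n')}=\{0\}$ for a suitable $n'\in(\lf')^n_{-1}$. Since the choice of $\OO$ only affects $n$ up to $L_0$-conjugacy and every nilpotent of $\lf'_1$ arises this way, the existence statement over all relevant $J(\lf,\OO)$ is exactly the quantifier ``$\forall n\in\lf'_1$ nilpotent, $\exists n'$\ldots'' ranging over all rank-one Levis $\lf$ arising from $\hh_1$.

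The only subtlety — and the step I expect to require the most care — is the decoupling argument showing that the semisimple part $x_s\in(\zz_\lf)_1$ and the $U^\vee$-type complement can be ignored, so that $\hh_0^{x,\varphi}=\{0\}$ genuinely reduces to a condition inside the derived algebra $\lf'$ on the pair $(n,n')$. This uses Proposition \ref{dec_classes}~(iii)'s computation $\hh_0\cdot x=\lf_0\cdot x_n\oplus\qq_1$ (the generalized-eigenspace splitting of $\ad x$), the $H$- and $\theta$-invariance of the Killing form to transport $\varphi$ to $y\in\hh_{-1}$ and to see that $y$ may be taken in $(\lf')_{-1}$ after projecting away the $\qq_{-1}$ and $(\zz_\lf)_{-1}$ summands, and the fact that an element of $\hh_0$ centralizing both $x$ and $y$ must already centralize $x_s$, hence lie in $\lf_0=\lf'_0$. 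Once this is granted, the equivalence in Lemma \ref{lemmaP} is immediate, and the remaining work — enumerating the rank-one cases and testing \eqref{propP} on each — is exactly what the subsequent Proposition \ref{init}, Lemma \ref{P_or_not_P} and Lemma \ref{nonproper} carry out.
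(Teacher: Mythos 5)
Your proposal is correct and follows essentially the same route as the paper: invoke Corollary~\ref{cor_locfree}~(v) (using irreducibility from Theorem~\ref{main_thm_theta}~(i)), identify the decomposition classes of modality $\dim\cc-1$ via Lemma~\ref{Vin_prelim}~(i),(iii) and Proposition~\ref{dec_classes}~(iv) as those with $(L_0,\lf'_1)$ of rank one, and then use $\hh_0^{x,\varphi}\subset\hh^x\subset\lf$ together with $\lf_0=\lf'_0$ to reduce \eqref{carac_smooth_codim1} to \eqref{propP}. The ``decoupling'' step you flag (that $y$ necessarily lies in $\lf_{-1}$ and its $(\zz_\lf)_{-1}$-component is centralized by all of $\lf_0$, so only the pair $(n,n')$ in $\lf'$ matters) is exactly what the paper leaves implicit, and your sketch of it is sound.
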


In other words, $\muz$ is normal if and only if the same holds for the semisimple part of any Levi subalgebra arising from $\hh_1$ whose associated $\theta$-representation is of rank one. The main difference with the symmetric Lie algebra case studied in \cite{Pan94} is that we have many $\theta$-representations of rank one. We now aim to classify them. For this, we recall a few features of the so-called \emph{Kac diagrams}. The reader is referred to \cite[\S 3]{Kac80} or \cite[\S 8]{Vin76} for a detailed treatment. 
\begin{itemize}
\item A Kac diagram is an affine (possibly twisted) Dynkin diagram $X_n^{(\ell)}$ equipped with an integer label for each node.
Up to isomorphisms of representations, it is enough to consider labels in $\{0,1,2\}$. If we focus on representations of positive rank, one can restricts to labels in $\{0,1\}$.
\item If $X_{n}(\ell)$ with a labelling $(v_0, \dots, v_n)$ form a Kac diagram, then it gives rise to a $\Zm$-grading on a reductive Lie algebra $\hh$, where $m=\ell \sum v_i$.
\item The Lie algebra $\hh_0$ is a reductive algebra whose  semisimple part is the Lie algebra corresponding to the subdiagram of $0$-labelled nodes. The rank of $\hh_0$ is that of the finite Dynkin diagram giving rise to $X_n^{(\ell)}$.
\end{itemize}

For instance, for each affine Dynkin diagram $X_n^{(\ell)}$ appearing in the classification of Kac \cite[\S3, Table 1]{Kac80}, we can label each node of $X_n^{(\ell)}$ with $1$. We denote such a Kac diagram by $\Xnko$. They give rise to the only $\theta$-representations of positive rank $(H_0,\hh_1)$ where $H_0$ is a torus.
\begin{lemma}\label{111}
Let $(H_0, \hh_1)$ be a $\theta$-representation corresponding to a Kac diagram of the form $\Xnko$.
Then $(H_0,\hh_1)$ is stable locally free of rank $1$, and $\muz$ is a normal variety.
\end{lemma}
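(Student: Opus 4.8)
The Kac diagram $\Xnko$ has all labels equal to $1$, so $\hh_0$ has a trivial semisimple part (the subdiagram of $0$-labelled nodes is empty), hence $H_0$ is a torus. Thus $(H_0,\hh_1)$ falls in the torus setting of Section~\ref{norm_muz_tori}, with weights $(s_i)_i$ given by the roots of $\hh$ lying in the $\hh_1$-graded piece. I first want to record that $H_0$ is a torus of rank $n$ (the rank of the finite Dynkin diagram underlying $X_n^{(\ell)}$), that $\hh_1$ has dimension $n+1$ (one root space per node of the affine diagram, these being exactly the simple roots $\alpha_0,\dots,\alpha_n$ of the affine root system in the $+1$ eigenspace when all labels are $1$), and that the weights $s_0,\dots,s_n$ of $H_0$ on these root spaces satisfy a single linear relation $\sum_i a_i s_i=0$ where $a_i$ are the marks (Kac labels/coefficients of the highest root) of $X_n^{(\ell)}$, all strictly positive. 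This is the classical ``affine Cartan matrix has one-dimensional kernel spanned by a positive vector'' fact.

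Once this is set up, everything is an application of the torus theory. Stability: since all $a_i>0$, the relation $\sum a_i s_i = 0$ exhibits $0\in\CHc(\{s_i\})$, so by Corollary~\ref{carac_stable} the representation is stable. Local freeness and rank $1$: the weights span a space of dimension $n=\dim H_0$ (so $\rk S=\dim T$, giving local freeness via \eqref{tspaceI}), and $\mod(T,V)=\dim V-\rk S=(n+1)-n=1$, i.e.\ rank $1$; equivalently the general orbit has codimension $1$ in $\hh_1$. Irreducibility and normality of $\muz$: I need $\rk S_{\hat\imath}=\rk S$ for every $i$, i.e.\ deleting any one weight still leaves a spanning set. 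This holds precisely because the \emph{only} linear relation among the $s_i$ is $\sum a_i s_i=0$ with all $a_i\neq 0$: removing any single $s_i$ kills the relation, so the remaining $n$ weights are linearly independent, hence of rank $n=\rk S$. Therefore by Theorem~\ref{irrcomp}(iii) $\muz$ is irreducible, and by Theorem~\ref{irrcomp}(iv) it is normal (and by (i) a reduced complete intersection). That gives the conclusion.

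The main thing to get right — and the only genuine obstacle — is the identification of the weight data: that the Kac diagram $\Xnko$ really does produce $H_0$ a torus acting on an $(n+1)$-dimensional space with weights the affine simple roots and with unique relation given by the (strictly positive) marks. For the untwisted case $X_n^{(1)}$ this is textbook: the grading element is (a multiple of) the dual of $\sum\varpi_i^\vee$ shifted, $\hh_1=\bigoplus_{i=0}^n \hh_{\alpha_i}$, and the null root $\delta=\sum a_i\alpha_i$ restricted to $\tf=\hh_0$ vanishes, which is exactly $\sum a_i s_i=0$; positivity of the $a_i$ is standard. For the twisted diagrams $X_n^{(\ell)}$, $\ell\in\{2,3\}$, the same holds with $\hh_1$ spanned by the affine simple root spaces of the twisted affine root system and the marks $a_i$ again positive integers. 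Alternatively, and perhaps more cleanly for the write-up, I can invoke that $\Xnko$-type diagrams are already stated in the excerpt (just before the lemma) to give ``the only $\theta$-representations of positive rank $(H_0,\hh_1)$ where $H_0$ is a torus'', and that $m=\ell\sum v_i=\ell(n+1)$-periodicity together with the structure of $\hh_0$ forces $\dim\hh_1=n+1$ and the single positive relation; so I will lean on Section~\ref{thetaGroups}'s recollections and the Kac classification rather than re-deriving affine root system facts. With the weight description in hand, the proof is a two-line deduction from Corollary~\ref{carac_stable} and Theorem~\ref{irrcomp}.
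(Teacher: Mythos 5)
Your proposal is correct and follows essentially the same route as the paper: identify $H_0$ as a torus (rank $n$) acting on the $(n+1)$-dimensional sum of affine simple root spaces with a unique linear relation among the weights having all coefficients of the same sign, then conclude stability from Corollary~\ref{carac_stable} and irreducibility/normality from Theorem~\ref{irrcomp}. The only cosmetic difference is that you verify the condition $\rk S_{\hat\imath}=\rk S$ of Theorem~\ref{irrcomp}(iii) directly, whereas the paper routes through Corollary~\ref{st_irr}; these amount to the same computation.
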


\begin{proof}
In this case, $\hh_0$ is just the Cartan subalgebra of $\hh$ (associated with the Dynkin diagram without the extended node $\alpha_{n+1}$) and $\hh_1$ is the sum of root spaces $\hh_{\alpha_i}$, $i\in [\![1,n+1]\!]$. Local freeness is clear since $\alpha_1,\dots \alpha_n$ is a basis of $\hh_0^*$. By Lemma \ref{Vin_prelim} (ii), the rank of $(H_0,\hh_1)$ is one. Since $\alpha_{n+1}$ is the opposite of the highest root, there is a single relation linking the $\alpha_i$: $\alpha_{n+1}=\sum_{i\in [\![1,n]\!]}a_i\alpha_i$ with $a_i<0$. Then $(H_0, \hh_1)$ is stable by Proposition \ref{carac_stable},  $\muz$ is irreducible by Corollary \ref{st_irr}  so $\muz$ is normal by Theorem \ref{irrcomp}.
%
\end{proof}

\begin{proposition} \label{init}
Let $(H_0,\hh_1)$ be a stable and locally free $\theta$-representation of rank $1$ obtained from a simple Lie algebra $\hh$.
Then $(H_0,\hh_1)$  corresponds to a Kac diagram of the form $\Xnko$ or which appears in Table \ref{ex_bad} or Table \ref{ex_good}  
\end{proposition}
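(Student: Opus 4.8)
The plan is to reduce the statement, via Kac's classification of positive-rank $\theta$-representations, to a numerical condition on Kac diagrams, and then to verify that condition over the finite list of affine Dynkin types.

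First I would recall (\cite[\S3]{Kac80}, \cite[\S8]{Vin76}) that, up to isomorphism of representations, a $\theta$-representation of positive rank arising from a simple $\hh$ is given by an affine Dynkin diagram $X_n^{(\ell)}$ together with a labelling $(v_0,\dots,v_n)$, $v_i\in\{0,1\}$; thus the proposition is a finite check over these labelled diagrams. Next I would make the hypotheses numerical. By Lemma~\ref{Vin_prelim}(ii) local freeness says $\dim\hh_1=\dim\hh_0+\dim\cc$, while one always has $\dim\hh_1-\dim\hh_0\leqslant\mod(H_0,\hh_1)=\dim\cc$ (the inequality because $m_0:=\max_x\dim H_0\!\cdot\! x\leqslant\dim\hh_0$, the equality as established in the proof of Lemma~\ref{Vin_prelim} using visibility, cf. Remark~\ref{rk_F0F1}); hence ``locally free of rank $1$'' is equivalent to ``locally free and $\dim\hh_1=\dim\hh_0+1$''. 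Both dimensions are read off the diagram: writing $m=v_0+\sum_{i\geqslant1}a_iv_i$ with $a_i$ the marks, and letting $\deg_v$ be the grading degree, one has $\dim\hh_0=n+\#\{\beta\ \text{root of}\ \hh:\deg_v\beta\equiv0\ (m)\}$ and $\dim\hh_1=\#\{\beta:\deg_v\beta\equiv1\ (m)\}$ (with the usual modifications in the twisted cases). Equivalently $\hh_0^{\mathrm{ss}}$ is the semisimple Lie algebra of the subdiagram of $0$-labelled nodes and $\hh_1$ is the sum, over the $1$-labelled nodes, of an explicit small $\hh_0^{\mathrm{ss}}$-module and its dual, so that the equality $\dim\hh_1-\dim\hh_0=1$ forces the $0$-subdiagram to be large and the $1$-nodes few and suitably placed.

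Then I would run the case analysis. If $H_0$ is a torus — equivalently all $v_i=1$ — we get the family $\Xnko$, which is stable, locally free of rank $1$ by Lemma~\ref{111}. Otherwise the $0$-subdiagram is a nonempty Dynkin diagram, and I would go through the affine types. For the classical ones ($A_n^{(1)},B_n^{(1)},C_n^{(1)},D_n^{(1)}$ and the twisted $A_n^{(2)},D_n^{(2)}$) I would use the standard matrix models of the graded Lie algebra, in which $\hh_1$ is a space of block matrices assembled from $\mathrm{Hom}$-spaces (or their symmetric/antisymmetric variants) between the graded pieces of the defining module; in terms of the block sizes $q_1,\dots,q_s$ the equality $\dim\hh_1-\dim\hh_0=1$ becomes an explicit quadratic Diophantine relation whose only solutions have all $q_j=1$, i.e.\ return the labelling $\ubar{1}$. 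For the exceptional types ($E_6^{(1)},E_7^{(1)},E_8^{(1)},F_4^{(1)},G_2^{(1)},E_6^{(2)},D_4^{(3)}$) the set of $\{0,1\}$-labellings is small and can be enumerated by hand; discarding those with $\dim\hh_1-\dim\hh_0\neq1$ and those that fail to be locally free leaves exactly the diagrams of Table~\ref{ex_bad} and Table~\ref{ex_good} (each of which one checks is stable, e.g.\ via Lemma~\ref{Vin_prelim}(iii)).

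The main obstacle is this last step: keeping the classical dimension count uniform in $n$ rather than an unbounded list of subcases, and organising the finite but bulky enumeration for $E_8^{(1)}$ and the twisted diagrams. A convenient way to trim this is to combine heredity (Lemma~\ref{Vin_prelim}(iv)) with induction on $\dim\hh$: restricting $(H_0,\hh_1)$ at a semisimple element of $\hh_1$ yields a locally free $\theta$-representation of rank $\leqslant1$ on a proper Levi, hence one already classified, and the compatibility between a candidate diagram and all of its restrictions forces it down to $\Xnko$ or to one of the rows of the two tables.
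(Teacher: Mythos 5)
Your overall strategy coincides with the paper's: translate ``locally free of rank $1$'' into $\dim\hh_1=\dim\hh_0+1$ via Lemma~\ref{Vin_prelim}(ii), enumerate the $\{0,1\}$-labelled exceptional affine diagrams (the paper does this with GAP rather than by hand), and treat the classical types through Vinberg's matrix models in terms of the block sizes $k_j=\dim V(\lambda_j)$. But there is a genuine gap in your classical analysis. You claim that the quadratic Diophantine relation $\dim\hh_1-\dim\hh_0=1$ has ``only solutions with all $q_j=1$''. That is correct only for inner automorphisms of type $A$, where $\dim\hh_1-\dim\hh_0=1-\tfrac12\sum_j(k_j-k_{j+1})^2$ forces all $k_j$ equal. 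In types $B$, $C$, $D$ and outer type $A$ the identity instead reads
\[
\dim\hh_1-\dim\hh_0=\frac14\Bigl[-\sum_{j}(k_j-k_{j+1})^2+2\epsilon_1\eta_1k_0+2\epsilon_{-1}\eta_{-1}k_{m'}\Bigr],
\]
and the locally-free rank-one condition only pins down $k_{j_{min}}=1$ together with $|k_j-k_{j+1}|\leqslant1$ monotonically along the two arcs from $j_{min}$ to $m'$; staircase patterns such as $1,2,3,\dots$ are admissible solutions. These yield locally free $\theta$-representations of rank one that are \emph{not} stable, and eliminating them requires the stability hypothesis in an essential way: whenever two adjacent blocks have $k_j,k_{j+1}\geqslant2$ one constructs a nonzero nilpotent $N\in\hh_1^C\setminus\K C$ commuting with a generic semisimple $C$, contradicting Lemma~\ref{Vin_prelim}(iii). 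Your proposal never uses stability in the classical case, so as written it would not close the classification.

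A secondary inaccuracy: even after imposing stability, the surviving classical solutions do not all have $k_j=1$; blocks of size $2$ persist at the self-dual eigenvalues $\pm1$ (e.g.\ $B_n^{(1)}(\ubar1)$, $D_n^{(2)}(\ubar1)$, $A_{2p-1}^{(2)}(\ubar1)$). The conclusion that these still correspond to diagrams $\Xnko$ holds because $\lieso_1$ and $\lieso_2$ contribute nothing, respectively only a torus factor, to $\hh_0$, so $H_0$ remains a torus --- but this is the actual reason the list collapses onto $\Xnko$, and it needs to be argued rather than deduced from a (false) claim that all block sizes are $1$. The concluding suggestion to use heredity and induction is plausible but undeveloped; the paper only uses a heredity-type argument later, in Lemma~\ref{nonproper}.
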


\begin{proof}
When the Lie algebra $\hh$ is \underline{exceptional} we use the software \cite{GAP} and its package \cite{SLA}. We proceed as follows.
 
Given $\hh$ of exceptional type (respectively of type $D_4$), the first step consists in listing the Kac-diagrams relative to $\hh$ (resp. with underlying Dynkin diagram $D_4^{(3)}$) with labels in $\{0,1\}$. Then we select the corresponding gradings for which $\dim \hh_1=\dim \hh_0+1$. There are few of them. For each selected grading, we choose a random element $x\in \hh_1$ and compute its semisimple part $s$. Since the rank of $(H_0,\hh_1)$ is at least $1$, the element $s$ should be non-zero (otherwise choose another $x$). Then, $(H_0,\hh_1)$ is stable locally free of rank $1$ if and only if $\dim \hh_{1}^s=1$ (Lemma~\ref{Vin_prelim}). In addition to the $\Xnko$, the only other Kac diagrams arising in this way are those of Tables \ref{ex_bad} and \ref{ex_good}. \\

Assume now that $\hh \subset\ggl(V)$ is \underline{classical}. Our goal is to show that the only stable and locally free $\theta$-representatons of rank $1$ are those arising from the $\Xnko$. The strategy is similar to the exceptional case, except that we rather rely on the description of classical $\theta$-representations of Vinberg \cite[\S7]{Vin76}.  We refer to this paper and we use the notation therein.  In particular, there are four subcases to consider: 
\begin{description}
\item[\normalfont{\emph{First case}}] $\theta$ is an inner automorphism of $\hh=\liesl(V)$ (type $A$);
\item[\normalfont{\emph{Second case}}] $\hh$ is of type $B$ or $D$; 
\item[\normalfont{\emph{Third case}}] $\hh$ is of type $C$; 
\item[\normalfont{\emph{Fourth case}}] $\theta$ outer and $\hh$ of type $A$.
\end{description}
In the first three cases (respectively in the fourth case) $\theta$ (respectively $\theta^{2}$) is an automorphism of order $m_0$ given by $g\mapsto aga^{-1}$ for some $a \in \GL(V)$ satisfying $a^{m_{0}}\in \{\pm Id\}$. For $\theta$ of order $1$ or $2$, the only locally free case of rank $1$ is $A^{(1)}_1(\ubar 1)$ so we may assume that $m_0> 1$. Then $V=\bigoplus_{\lambda\in S}V(\lambda)$ where $V(\lambda)=\{x\in V| a.x=\lambda x\}$ and $S=\{\lambda| \lambda^{m_0}=+1\}$ or $S=\{\lambda | \lambda^{m_0}=-1\}$. That is $S=\{\lambda_0\omega_0^j|j\in \Zmo\}$ for $\lambda_0=1$ or $exp(\frac{2 i \pi}{2m_0})$ and $\omega_0:=exp(\frac{2i\pi}{m_0})$. 
For $j\in \Zmo$, we set $\lambda_j:=\lambda_0\omega_0^j$ and $k_j:=\dim V(\lambda_j)$. The elements $x\in\hh_k$ satisfy $x.V(\lambda_j)\subset V(\lambda_{j+k})$ for any $j\in \Zmo$. 

We first look at the first case. From \cite{Vin76}, the rank of $(H_0,\hh_1)$ is $\min_{j\in \Zmo} k_j$. Since $\hh_0\cong\liesl(V)\cap \prod_{j\in \Zmo}\ggl(V_j))$ and $\hh_1\cong \prod_{j\in \Zmo}Hom(V_j,V_{j+1})$, we have \[\dim \hh_1-\dim \hh_0=
\left (\sum_{j\in \Zmo} k_j k_{j+1}\right)-\left(\sum_{j\in \Zmo} k_j^2-1\right)=1-\frac12 \sum_{j\in \Zmo} \left(k_j-k_{j+1}\right)^2.\] 

From Lemma~\ref{Vin_prelim}~(ii), we want that this number is $1$. The only possibility is that $k_j=k_{j'}$ for any $j,j'\in \Zmo$ and the condition on the rank forces this common value to be $1$. In this particular case, $H_0$ is a torus so this must coincide with a grading given by a Kac diagram of the form $A^{(1)}_n(\ubar 1)$ (inner automorphism).

From now on we focus on the last three cases. In each of these cases, a nondegenerate bilinear form comes into play. This  form induces a duality between $V(\lambda)$ and $V(\overline \lambda)$ and we have $k_j=k_{\bar j}$ where $\bar j$ denotes the element of $\Zmo$ satisfying $\lambda_{\bar j}=\overline{\lambda_j}$. Depending on whether each elements $1$ and $-1$ belong to $S$ or not, the corresponding $\theta$-representations may have different behavior. For $\nu\in \{\pm1\}$, we set $\eta_{\nu}:=\alter{1 & \textrm{ if } \nu\in S\\ -1 & \textrm{ if } \nu \notin S}$. 
Set also $\epsilon_1:=\alter{1& \textrm{ in the 2nd and 4th case}\\-1 &\textrm{ in the 3rd case}}$ and $\epsilon_{-1}:=\alter{1& \textrm{ in the 2nd case}\\-1 &\textrm{ in the 3rd and 4th case}}$. 
We then have a uniform formula
\begin{eqnarray*}\dim \hh_1-\dim \hh_0&=&
\frac12\left[\sum_j k_j k_{j+1}-\sum_j k_j^2+(\epsilon_1\eta_1k_0+\epsilon_{-1}\eta_{-1}k_{m'})\right]\\
&=& \frac 14\left[-\sum_{j}(k_j-k_{j+1})^2+2\epsilon_1\eta_1k_0+2\epsilon_{-1}\eta_{-1}k_{m'}\right]
\end{eqnarray*}
In a nutshell, $\hh_0$ is a product of the $\ggl(V_j)\cong\ggl(V_{\bar j})$ for $\lambda_j\notin\{\pm1\}$ and possible copies of $\mathfrak{so}(V_j)$ or $\mathfrak{sp}(V_j)$ for $\lambda_j\in\{\pm1\}$. A similar combinatoric holds for $\hh_1$. In the fourth case, the assumption $m_0>1$ ensures  that $\K Id_V$ lies in a $k$-th part of the grading in $\ggl(V)$ satisfying $k\notin\{0,1\}$.

We choose $m'\in \Zmo$ such that $Re(\lambda_{m'})$ is minimal. By convention, the argument of a complex number is taken in $[0,2\pi[$. Choosing $j_{min}\in \Zmo$ such that $k_{j}$ is minimal and $Im(\lambda_{j})\geqslant 0$, we define
\[I=\left\{j\, \left| arg(\lambda_j)<arg(\lambda_{j_{min}}) \textrm{ or }arg(\lambda_{m'}) \leqslant arg(\lambda_j)<arg(\lambda_{\overline{j_{min}}}) \right.\right\},\] 
\[I=\left\{j\, \left|arg(\lambda_{j_{min}}) \leqslant arg(\lambda_j)<arg(\lambda_{m'}) \textrm{ or } arg(\lambda_{\overline {j_{min}}}) \leqslant arg(\lambda_j) \right.\right\}.\] 

Using the equality $n^2=n(n-1)+n$, we can write
\begin{eqnarray*}\dim \hh_1-\dim \hh_0\!\!\!&=&\!\!\!
\frac14 \left[-\sum_{j\in I} (k_j-k_{j+1})(k_j-k_{j+1}-1)
 -\sum_{j\in J} (k_{j+1}-k_{j})(k_{j+1}-k_{j}-1)\right.\\ 
 & &\left.+\sum_{j\in I} (k_{j+1}-k_j)+\sum_{j\in J} (k_{j}-k_{j+1})+2\epsilon_1\eta_1k_0+2\epsilon_{-1}\eta_{-1}k_{m'}\right]\end{eqnarray*}
The first two terms are non-positive and the rest of the expression is equal to $(k_{j_{min}}-k_0)+(k_{\overline{j_{min}}}-k_{m'})+(k_{j_{min}}-k_{m'})+(k_{\overline {j_{min}}}-k_0)+2\epsilon_1\eta_1k_0+2\epsilon_{-1}\eta_{-1}k_{m'}=2\left[(\epsilon_1\eta_1-1)k_0+k_{j_{min}}\right]
+2\left[(\epsilon_{-1}\eta_{-1}-1)k_{m'}+k_{j_{min}}\right]$. Each of the terms in brackets is either equal to $k_{j_{min}}$ if $\epsilon_{\nu}\eta_{\nu}=1$, or is not greater than $-k_{j_{min}}$ if $\epsilon_{\nu}\eta_{\nu}=-1$. In particular, $\dim \hh_1-\dim \hh_0$ can be positive only if $\epsilon_{\nu}\eta_{\nu}=1$ for $\nu\in \{\pm1\}$. Hence, it is of type I in the terminology of \cite[\S7.2]{Vin76}, and its rank is $k_{j_{min}}$.
Then, it follows from our formulas and Lemma \ref{Vin_prelim} that the only locally free cases of rank $1$ are the following ones 
\[
\left(\begin{array}{c}k_{j_{min}}=1,\\ \forall j\in I, k_j-k_{j+1}\in \{0,1\},\\ \forall j\in J, k_{j+1}-k_{j}\in \{0,1\}\end{array}\right) \textrm{ and }\alter{\pm1\in S&\textrm{ in the 2nd case}\\\pm1\notin S&\textrm{ in the 3rd case}\\1\in S, -1\notin S&\textrm{ in the 4th case}} 
.\]

In the last three cases, the action is not stable in general. Given subspaces $U(\lambda)\subset V(\lambda)$ of dimension $1$, we can construct a non-zero semisimple element $C\in \hh_1$ such that $C(U(\lambda))=U(\omega_0\lambda)$. Then $V':=Ker(C)$ decomposes as $V':=\bigoplus_{\lambda} V'(\lambda)$; the decomposition in two summands $V=(\bigoplus_{\lambda} U(\lambda))\oplus V'$ is orthogonal and the derived subalgebra of $\ggl(V)^C$ can be identified with $\liesl(V')$. If there exists $j\in \Zmo$ such that $k_j,k_{j+1}\geqslant 2$, then we can construct in each case a non-zero element $N$ in $\hh_1^C\cap \liesl(V')$ with $N(V'({\lambda_j}))\subset V'(\lambda_{j+1})$, $N(V'(\lambda_{\overline{j+1}}))\subset V'(\lambda_{\overline{j}})$ and $N(U(\lambda))=0=N(V'(\lambda_k))$ for any $\lambda\in S$ and any $k\notin \{j,\overline{j+1}\}$. This contradicts the stability hypothesis since $N\in \hh^C\setminus \K C$, see Lemma \ref{Vin_prelim}~(iii).
Thus, the only stable gradings to consider are those with $\alter{k_j=1& \textrm{for $j$ such that $\lambda_j \notin\{\pm1\}$}\\ k_j\in\{1,2\} & \textrm{for $j$ such that $\lambda_j \in\{\pm1\}$}\\ }$. The different possibilities are listed in Table \ref{listclassical} (up to multiplication of $a$ by $-1$ in type $B$, which gives rise to the same grading).
Once again, $H_0$ is a torus in each case so the corresponding grading must be obtained from a Kac diagram of the form $\Xnko$. These Kac diagrams are identified in the last column of Table \ref{listclassical}.
\end{proof}

\begin{table}
\hspace{-1cm}
\begin{tabular}{|c| c| c| c| c|}
\hline
Case & \begin{tabular}{c}Parity \\of $m_0$\end{tabular} & \begin{tabular}{c}Parity of\\ $dim(V)$\end{tabular} & Condition on the $k_j$& $\Xnko$\\
\hline
First case & / &/  & $\forall j, \, k_j=1$& $A_n^{(1)}(\ubar1)$\\
\hline
\multirow{3}{*}{Second case} & \multirow{3}{*}{even}& odd  & $k_j=\alter{2&\textrm{if $j=0$}\\1&\textrm{else}}$& $B_n^{(1)}(\ubar1)$\\
\cline{3-5}
&&\multirow{2}{*}{even}& $\forall j, \, k_j=1$ &$D_n^{(1)}(\ubar1)$\\
\cline{4-5}
&&&$k_j=\alter{2&\textrm{if $j\in\{0,\frac{m_0}2\}$}\\1&\textrm{else}}$&$D_n^{(2)}(\ubar1)$ \\
\hline
Third case & even & even &  $\forall j, \, k_j=1$ & $C_n^{(1)}(\ubar1)$\\
\hline
\multirow{2}{*}{Fourth case} &\multirow{2}{*}{odd}&even& $k_j=\alter{2&\textrm{if $j=0$}\\1&\textrm{else}}$ &$\begin{array}{c}A_{2p-1}^{(2)}(\ubar1)
\end{array}$\\
\cline{3-5}
&&odd&$\forall j, \, k_j=1$ & $\begin{array}{c}A_{2p}^{(2)}(\ubar1)
\end{array}$\\
\hline
\end{tabular}
\caption{Stable classical locally free grading of rank $1$}\label{listclassical}
\end{table}

\begin{lemma}\label{P_or_not_P}
If $(H_0, \hh_1)$ is a $\theta$-representation associated to a Kac diagram of Table \ref{ex_bad} (resp. Table \ref{ex_good}) then it does not satisfy (resp. it satisfies) condition \eqref{propP} of Lemma \ref{lemmaP}.
\end{lemma}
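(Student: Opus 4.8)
The plan is to first make \eqref{propP} explicit in the rank-one situation, then treat the two tables in turn. Since $(H_0,\hh_1)$ is of rank $1$, the only Levi subalgebra entering \eqref{propP} is $\hh$ itself, and, identifying $\hh_{-1}$ with $\hh_1^*$ via the Killing form so that $\muz\cong\{(x,y)\in\hh_1\times\hh_{-1}\mid[x,y]=0\}$, condition \eqref{propP} reads: for every nilpotent $n\in\hh_1$ there is $n'\in\hh_{-1}$ with $[n,n']=0$ and $\hh_0^n\cap\hh_0^{n'}=\{0\}$. I would first record two reductions. If $\hh_0^n=\{0\}$ one takes $n'=0$, so only nilpotents $n$ (including $n=0$) with $\hh_0^n\neq\{0\}$ need be considered; and since the representation is stable and locally free, $\dim\hh_1=\dim\hh_0+1=\dim\hh_{-1}$, so a generic semisimple $y\in\hh_{-1}$ has a closed orbit of dimension $\dim\hh_0$ and therefore $\hh_0^y=\{0\}$, which disposes of $n=0$. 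What remains is a verification over the finitely many (by visibility) intermediate nilpotent $H_0$-orbits; I expect to carry it out with the assistance of \cite{GAP,SLA}, and below I only describe its structure.

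For the rows of Table~\ref{ex_bad} the plan is to show that \eqref{propP} \emph{fails} at the listed element $n$, by proving that the listed subspace $\wfr$ satisfies $\wfr\subseteq\hh_0^n\cap\hh_0^{n'}$ for \emph{every} $n'\in\hh_{-1}$ with $[n,n']=0$; since $\wfr\neq\{0\}$, this is exactly the negation of \eqref{propP}. That $\wfr\subseteq\hh_0$ and $[\wfr,n]=0$ is a routine check that the relevant sums of roots are not roots. The real content is $[\wfr,n']=0$ for all $n'\in\hh_{-1}^n$, i.e.\ that $\hh_{-1}^n$ is very small. For the four rows of types $E_6^{(1)}$, $E_7^{(1)}$ and $E_8^{(1)}$ ($m=24,20$), where $\wfr=\langle x_{\alpha_4}\rangle$ and $n=x_{\alpha_2+\alpha_4}+x_{\alpha_3+\alpha_4}+x_{\alpha_4+\alpha_5}$, I would argue uniformly by refining the grading with the adjoint action of the coroot $\alpha_4^\vee\in\hh_0$. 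In each of these cases the degree-$0$ roots are $\pm\alpha_4$ (together with $\pm\alpha_6$ when $m=20$), all of $\ad\alpha_4^\vee$-eigenvalue in $\{-2,0,2\}$, while the eigenvalues on $\hh_{\pm1}$ lie in $\{-1,0,1\}$ and $n$ is homogeneous of eigenvalue $1$; moreover the degree-$1$ roots of eigenvalue $1$ are precisely $\alpha_2+\alpha_4,\alpha_3+\alpha_4,\alpha_4+\alpha_5$. Decomposing $n'=n'_{-1}+n'_0+n'_1$ into $\ad\alpha_4^\vee$-eigenvectors of eigenvalues $-1,0,1$, one sees that $[n,n']=0$ forces $[n,n'_{-1}]=0$ (there is no eigenvalue-$1$ part in $\hh_0$), hence $n'_{-1}=0$ (the cross-brackets vanish since $\alpha_2,\alpha_3,\alpha_5$ are pairwise non-adjacent, and the diagonal ones produce the linearly independent coroots of $\alpha_2+\alpha_4,\alpha_3+\alpha_4,\alpha_4+\alpha_5$); then $[x_{\alpha_4},n']=[x_{\alpha_4},n'_0]+[x_{\alpha_4},n'_1]$ and both terms vanish for degree reasons, $[x_{\alpha_4},n'_0]$ having eigenvalue $2$ but lying in $\hh_{-1}$ and $[x_{\alpha_4},n'_1]$ having eigenvalue $3$ but lying in $\hh$. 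The last row ($E_8^{(1)}$, $m=15$, $\wfr=\langle x_{\alpha_2},x_{\alpha_7}\rangle$) I would handle in the same spirit using the commuting actions of $\alpha_2^\vee$ and $\alpha_7^\vee$, or simply by direct computation.

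For Table~\ref{ex_good} --- types $G_2$, $F_4$, $E_6$, each a stable locally free rank-one $\theta$-representation (by Proposition~\ref{init}) with $\hh_0$ having $\mathfrak{sl}_2$ as semisimple part --- the plan is merely to enumerate the few nilpotent $H_0$-orbits $\OO\subset\hh_1$ that are neither $\{0\}$ nor dense and satisfy $\hh_0^n\neq\{0\}$ for $n\in\OO$, and for each of these to exhibit an $n'\in\hh_{-1}^n$ with $\hh_0^n\cap\hh_0^{n'}=\{0\}$; this is a short explicit computation case by case.

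The step I expect to be the main obstacle is the one highlighted in the second paragraph: for the $E$-type rows of Table~\ref{ex_bad} it does not suffice that $\wfr$ commutes with $n$; one must understand the entire centralizer $\hh_{-1}^n$ and see that it lies inside the centralizer of $\wfr$. Pinning down the right auxiliary grading --- here the $\alpha_4^\vee$-grading, which turns the claim into transparent eigenvalue bookkeeping --- is the creative input, after which everything becomes routine; finding the elements $n$ and subspaces $\wfr$ recorded in Table~\ref{ex_bad} in the first place is part of the same difficulty.
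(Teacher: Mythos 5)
Your proposal follows essentially the same strategy as the paper: the lemma is in the end a finite verification over representatives of the nilpotent $H_0$-orbits in $\hh_1$ carried out with \cite{GAP, SLA}, and for the rows of Table~\ref{ex_bad} the paper likewise establishes the failure of \eqref{propP} by computing $\wfr=\bigcap_{z\in\hh_{-1}^{n}}\hh_0^{(n,z)}$ for each orbit representative $n$ and finding it nonzero for the listed one --- which is exactly your criterion. Your preliminary reductions (disposing of $n=0$ via a generic semisimple element of $\hh_{-1}$, and of orbits with $\hh_0^n=\{0\}$) are correct, and the genuine addition in your write-up is the hand argument via the $\ad\alpha_4^\vee$-eigenspace decomposition, which does work for the rows with $m=9,14,24$. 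However, one of its stated facts fails for the $E_8^{(1)}$, $m=20$ row: there the degree-one roots of $\alpha_4^\vee$-eigenvalue $1$ are \emph{not} only $\alpha_2+\alpha_4$, $\alpha_3+\alpha_4$, $\alpha_4+\alpha_5$; the root $\alpha_4+\alpha_5+\alpha_6$ also qualifies (since $\langle\alpha_6,\alpha_4^\vee\rangle=0$), so the eigenvalue-$(-1)$ part of $\hh_{-1}$ is four-dimensional and your "cross-brackets vanish, diagonal brackets give independent coroots" justification of $n'_{-1}=0$ does not apply as written. The conclusion survives, because the only new cross-bracket $[x_{\alpha_4+\alpha_5},x_{-\alpha_4-\alpha_5-\alpha_6}]$ is a nonzero multiple of $x_{-\alpha_6}$, which is linearly independent of the three coroots, so $[n,\cdot]$ is still injective on that eigenspace; but this extra check is needed. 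The remaining deferred computations (the $m=15$ row and all of Table~\ref{ex_good}) coincide with what the paper also leaves to the computer.
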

The proof of this lemma relies on computer-based computations. 
First, we list representatives of the nilpotent $H_0$-orbits on $\hh_1$. For any such representative $n_i$, we choose a random element $n_i'\in \hh_{-1}^n$. Then we compute $\hh_0^{(n_i,n_i')}$. It turns out to be $\{0\}$ for each $i$ in cases of Table \ref{ex_good} so \eqref{propP} holds. In cases of Table \ref{ex_bad}, we compute $\wfr_i:=\{y\in \hh_0^{n_i}\,|\,[y,\hh_{-1}^{n_i}]=\{0\}\}=\bigcap_{z\in \hh_{-1}^{n_i}} \hh_0^{(n_i,z)}$. In each case, it turns out that there exists $i$ such that $\wfr_i\neq\{0\}$. Hence \eqref{propP} must fail in these cases. An example of such $n_i, \wfr_i$ is given in the table, using the numbering of the roots as in Bourbaki and where $x_{\alpha}$ denotes a root vector for the root $\alpha$.

\begin{lemma}\label{nonproper}
Let $(H_0,\hh_1)$ be a locally free stable $\theta$-representation with $\hh$ simple. Then there is no proper Levi $\lf$ arising from $\hh_1$ such that $(L_0, \lf_1')$ corresponds to a Kac diagram of Table \ref{ex_bad}.   
\end{lemma}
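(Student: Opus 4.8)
The strategy is to show that if a proper Levi subalgebra $\lf$ arising from $\hh_1$ had $(L_0,\lf_1')$ isomorphic to one of the five rank-one $\theta$-representations of Table~\ref{ex_bad}, then $\hh$ itself would be forced to be too small or of the wrong type. First I would recall that $\lf = \hh^s$ for some semisimple $s\in\hh_1$, so $\lf$ is $\theta$-stable and the grading on $\lf$ (hence on $\lf'=[\lf,\lf]$) is induced from that of $\hh$; in particular $\lf_0 = \lf_0'\oplus\zz(\lf)_0$ and, by Lemma~\ref{Vin_prelim}(iv) applied to the stable locally free representation, $\lf_0 = \lf_0'$, i.e. $\zz(\lf)\cap\hh_0 = \{0\}$. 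Since $(L_0,\lf_1')$ is required to be of rank one with $\dim(\zz_\lf)_1 = \dim\cc - 1$ (this is exactly the situation singled out before Lemma~\ref{lemmaP}), the rank of $(H_0,\hh_1)$ is at least $2$; so $\hh$ has rank at least $2$ and the Kac diagram of $\hh$ has at least three nodes.

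The key step is a case analysis on the five entries of Table~\ref{ex_bad}, all of which are $\theta$-representations of exceptional type $\E_6^{(1)}$, $\E_7^{(1)}$, $\E_8^{(1)}$ (three gradings). For each, the semisimple Levi factor $\lf'$ is itself of exceptional type (one reads off from the Kac diagram that the $0$-labelled subdiagram together with the structure of $\lf'$ forces $\lf'$ to contain an $\E_6$, $\E_7$ or $\E_8$ summand — indeed in all five cases $\lf'$ has a simple factor of rank $6$, $7$ or $8$ respectively). A proper Levi subalgebra of a simple $\hh$ whose derived algebra contains a copy of $\E_6$, $\E_7$ or $\E_8$ forces $\hh$ to be of exceptional type of rank strictly larger, and only finitely many possibilities occur: $\E_6\subsetneq \lf'$ with $\lf\subsetneq\hh$ needs $\hh\in\{\E_7,\E_8\}$; $\E_7\subsetneq\lf'$ needs $\hh=\E_8$; $\E_8$ cannot be a proper Levi factor at all. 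So the task reduces to checking a short explicit list: for each candidate $\hh$ and each $\theta$-grading on $\hh$ that is stable and locally free, determine the Levi subalgebras arising from $\hh_1$ and verify that none of them has $(L_0,\lf_1')$ of the form in Table~\ref{ex_bad}. This is exactly the kind of finite check the software package \cite{SLA} in \cite{GAP} already performed in the proof of Proposition~\ref{init}: one enumerates semisimple $s\in\hh_1$ (equivalently, the $\theta$-stable Levi subalgebras $\hh^s$, which correspond to sub-Kac-diagrams), computes the induced grading on $\lf'$, and compares with the five forbidden diagrams.

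Concretely I would organise the verification as follows: run through the stable locally free $\theta$-representations of $\E_7$ and $\E_8$ (these are finitely many, and already tabulated in the classification underlying Proposition~\ref{init}); for each, list the $\theta$-stable Levi subalgebras $\lf=\hh^s$ with $s\in\hh_1$ semisimple (equivalently the sub-diagrams of the Kac diagram obtained by deleting nodes, keeping track of the induced labelling on the remaining diagram); read off the induced grading on $\lf'$; and check that the resulting rank-one $\theta$-representation $(L_0,\lf_1')$, when it occurs, is always one of the $\Xnko$ cases of Table~\ref{listclassical} or Table~\ref{ex_good}, never one of Table~\ref{ex_bad}. The point that makes this finite and tractable is that a Levi arising from $\hh_1$ corresponds to a sub-Kac-diagram inheriting its labels, so the bookkeeping is purely combinatorial once the ambient gradings are fixed.

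The main obstacle I anticipate is not conceptual but one of completeness of the combinatorial enumeration: one must be sure to list \emph{all} $\theta$-stable Levi subalgebras $\hh^s$ (not merely those obtained by deleting $0$-labelled nodes, since $s$ ranges over all of $\hh_1$, not just $\hh_0$), and to correctly compute the \emph{induced} $\Zm$-grading on each $\lf'$ — the modulus $m$ can drop, and the labelling on the sub-diagram need not be the restriction of the original labelling in the naive sense. I would handle this exactly as in Proposition~\ref{init}, delegating the enumeration and the dimension checks $\dim\lf_1' = \dim\lf_0' + 1$ (local freeness of rank one) to \cite{GAP}/\cite{SLA}, and then matching the finitely many rank-one outputs against the explicit Kac diagrams of Tables~\ref{ex_bad} and~\ref{ex_good}; the conclusion is that no match with Table~\ref{ex_bad} ever arises from a \emph{proper} Levi, which is the assertion of the lemma.
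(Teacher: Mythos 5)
Your reduction to $\hh\in\{\E_7,\E_8\}$ (via the observation that $\lf'$ would have to be a copy of $\E_6$ or $\E_7$ sitting as a proper Levi factor, the $\E_8$ entries of Table~\ref{ex_bad} being excluded outright) is exactly the first step of the paper's proof, and your bound $\dim\cc\geqslant 2$ also appears there. Where you diverge is in the final verification: you propose to enumerate, for each stable locally free grading of $\E_7$ and $\E_8$, all Levi subalgebras arising from $\hh_1$ together with their induced gradings, and to compare each rank-one $(L_0,\lf_1')$ against the five forbidden diagrams. The paper instead extracts a single numerical invariant that makes this enumeration unnecessary: since $\theta$ restricts to an automorphism of $\lf'$ whose order divides the order $m$ of $\theta$, and the two realizable entries of Table~\ref{ex_bad} have orders $9$ and $14$, the ambient grading would have to have $m$ a multiple of $9$ or $14$; one then merely scans the Kac diagrams of $\E_7$ and $\E_8$ with $\dim\hh_1-\dim\hh_0\geqslant 2$ (forced by local freeness and $\dim\cc\geqslant 2$) and observes that no such order occurs. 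Your route would in principle also work, but it is substantially heavier and its hardest step is underspecified: Levi subalgebras arising from $\hh_1$ are centralizers of semisimple elements of $\hh_1$ (equivalently of subspaces of a Cartan subspace), and your claim that they ``correspond to sub-Kac-diagrams inheriting labels'' is not correct as stated --- you flag this yourself as the main obstacle, but then defer it entirely to software, whereas the paper's divisibility argument sidesteps the issue and never has to classify these Levis at all. So: same skeleton, genuinely different and less economical closing argument, with the one genuinely delicate point of your version left to an unspecified computation.
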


\begin{proof}
Assume that such a proper Levi exists. Then $\hh$ is of type $E_7$ or $E_8$ and the Cartan subspace of $\hh_1$ is of dimension $2$ or $3$. 
Moreover the order of the automorphism $\theta$  giving rise to the graduation is a multiple of $9$ or $14$. 
We can now consider all the possible Kac diagrams of type $E_7$ or $E_8$ such that $\dim \gg_1-\dim \gg_0 \geq 2$. 
After computation, it turns out that the resulting $\Zm$-gradings have order which are not multiple of $9$ or $14$, which is a contradiction.
\end{proof}

\begin{remark} \label{norm_quot}
Conjecture \ref{conjA} in the case of a locally free stable $\theta$-representation follows from \cite[Theorem~1.2]{BLLT}. We briefly explain here how to recover it using the results of this paper. First, note that  the $5$ exceptional cases of Table \ref{ex_bad} are of rank one. Then it follows from \cite[Proposition 3.4 \& 5.1]{BLLT} that the natural map $\cc\oplus\cv/W\rightarrow V\oplus V^*\SPR G$ is a dominant closed immersion, hence an isomorphism. In the other cases, Theorem \ref{main_thm_theta} ensures that $\muz$ is normal. So the same holds for the symplectic reduction and the result follows from \cite[Proposition~5.1]{BLLT} and Remark~\ref{rk_conjA}.\\
 Note that the ground results on polar representations of \cite{DK85} are known for $\theta$-representations over any algebraically closed field of characteristic zero. Thus, we don't need the assumption $\K=\C$ used in \cite{BLLT}. 
\end{remark}

\end{document}